\date{}
\newtheorem{defn}{\bf Definition}[section] 
\newtheorem{exam}[defn]{\bf Example}
\newtheorem{prop}[defn]{\bf Proposition}
\newtheorem{lem}[defn]{\bf Lemma}
\newtheorem{thm}[defn]{\bf Theorem} 
\newtheorem{cor}[defn]{\bf Corollary}
\newtheorem{rmk}{\bf Remark}
\numberwithin{equation}{section}
\newcommand{\footremember}[2]{%
    \footnote{#2}
    \newcounter{#1}
    \setcounter{#1}{\value{footnote}}%
}
\newcommand{\footrecall}[1]{%
    \footnotemark[\value{#1}]%
} 
\title{ A Linearized L1-Galerkin FEM for Non-smooth Solutions of Kirchhoff type  Quasilinear Time-fractional Integro-differential Equation }
\author{%
  Lalit Kumar \footremember{alley}{Department of Mathematics, Indian Institute of Technology Bombay, Mumbai-400076, India. lalitccc528@gmail.com}%
  \and Sivaji Ganesh Sista\footremember{trailer}{Department of Mathematics, Indian Institute of Technology Bombay, Mumbai-400076, India. siva@math.iitb.ac.in}%
  \and Konijeti Sreenadh\footrecall{alley} \footnote{Department of Mathematics, Indian Institute of Technology Delhi, New Delhi-110016, India. sreenadh@math.iitd.ac.in}%
  }
\date{}
\begin{document}

\maketitle

\begin{abstract}
\noindent In this article, we study the semi discrete and fully discrete formulations for a  Kirchhoff type quasilinear integro-differential equation \eqref{16-9-22-1} involving time-fractional  derivative of order $\alpha \in (0,1)$. For the semi discrete formulation of  the equation  \eqref{16-9-22-1}, we discretize the space domain using a conforming FEM and keep the time variable continuous. We modify the standard Ritz-Volterra projection  operator to carry out error analysis for the semi discrete formulation of the considered equation. In general, solutions of the time-fractional partial differential equations (PDEs)  have a weak singularity near time $t=0$. Taking this singularity into account, we develop a new linearized fully discrete numerical scheme for the equation \eqref{16-9-22-1} on a graded mesh in time. We derive a priori bounds on the solution of this fully discrete numerical scheme using a new weighted $H^{1}(\Omega)$ norm. We prove that the developed numerical scheme has an accuracy rate of $O(P^{-1}+N^{-(2-\alpha)})$ in $L^{\infty}(0,T;L^{2}(\Omega))$ as well as in  $L^{\infty}(0,T;H^{1}_{0}(\Omega))$, where $P$ and $N$ are degrees of freedom in the space and time directions respectively. The robustness and efficiency of the proposed numerical scheme are demonstrated by some numerical examples.
\end{abstract}

\noindent \textbf{Keywords:}
Nonlocal, Finite element method (FEM),  Fractional time derivatives, Integro-differential equation, Graded mesh.\\

\noindent \textbf{AMS subject classification.} 34K30, 26A33, 65R10, 60K50.

\section{Introduction}
The purpose of this article is to study the following time-fractional integro-differential equation with Kirchhoff type principal operator. Find $u:=u(x,t)$ such that 
\begin{equation} \tag{$\mathcal{D}^{\alpha}$} \label{16-9-22-1}
	^{C}D^{\alpha}_{t}u-M\left(\int_{\Omega}|\nabla u|^{2}~dx\right)\Delta u=f(x,t)+\int_{0}^{t}b(x,t,s)u(s)~ds ~\text{in}~ \Omega \times (0,T],
\end{equation}
with initial and boundary conditions
\begin{align*}
	u(x,0)&=u_{0}(x) \quad \text{in} ~\Omega,\\
	u(x,t)&=0 \quad \text{on} ~\partial \Omega \times [0,T],
\end{align*}
where the domain  $\Omega$ is a convex and bounded subset of $\mathbb{R}^{d}~(d\geq 1)$ with smooth boundary $\partial \Omega$ and $[0,T]$ is a fixed finite time interval. Nonlocal diffusion coefficient $M: (0,\infty) \rightarrow \mathbb{R}$ is a Lipschitz continuous function,  initial data $ u_{0}$, and  source term $f$ are known functions. The  memory operator $b(x,t,s)$ is defined  as a second order partial differential operator with smooth coefficients in  each variable $(x,t,s) \in \bar{\Omega}\times [0,T]\times [0,T]$
\begin{equation}
	b(x,t,s)u(s)=-\nabla \cdot (b_{2}(x,t,s)\nabla u(s))+\nabla \cdot (b_{1}(x,t,s)u(s))+b_{0}(x,t,s)u(s),
\end{equation}
with $b_{2}:\bar{\Omega}\times[0,T]\times [0,T] \rightarrow \mathbb{R}^{d\times d}$ is a symmetric and positive definite matrix with entries $[b_{2}^{ij}(x,t,s)]~(1\leq i,j \leq d)$, $b_{1}:\bar{\Omega} \times[0,T]\times [0,T] \rightarrow \mathbb{R}^{d}$ is a vector with entries $[b_{1}^{j}(x,t,s)]~(1\leq j \leq d)$, and  $b_{0}: \bar{\Omega}\times [0,T]\times [0,T] \rightarrow \mathbb{R}$ is a scalar function. The notation 
$^{C}D_{t}^{\alpha}u$ in the equation \eqref{16-9-22-1}  is the fractional time derivative of order $\alpha$ in the Caputo sense
\begin{equation}\label{LS1}
	^{C}D_{t}^{\alpha}u:=\frac{1}{\Gamma(1-\alpha)} \int_{0}^{t}\frac{1}{(t-s)^{\alpha}}\frac{\partial u }{\partial s}(s)~ds,
\end{equation}
where $\Gamma(\cdot)$ denotes the gamma function.
\par Time-fractional partial differential  equations arise naturally in the studies of physical processes like diffusion in a medium with fractal geometry, flow in highly heterogeneous aquifers, single-molecular protein dynamics  and have attracted the attention of researchers \cite{berkowitz2002physical,kou2008stochastic,nigmatullin1986realization}. There are some physical and biological models in which the unknown quantity $u$ depends on the whole space domain through the nonlocal diffusion coefficient $M\left(\int_{\Omega}|\nabla u|^{2}dx\right)$, for example  Kirchhoff-Carrier model for vibrations of an elastic string, Kirchhoff equations with a magnetic field, diffusion of a bacteria in a jar \cite{carrier1945non, mingqi2018critical, mingqi2018nonlocal}. There are some models in which history of the phenomena is captured  by the memory term $\int_{0}^{t}b(x,t,s)u(s)~ds$, for example viscoelastic forces in non-Newtonian fluids, heat conduction materials with memory. These types of models are driven by  integro-differential equations \cite{ferreira2007memory,mahata2021finite, miller1978integrodifferential, olmstead1986bifurcation}.
\par For the integer time derivative $(\alpha=1)$, authors in  \cite{kumar2020finite} proposed a linearized backward Euler-Galerkin FEM  with an accuracy rate of  $O(h+k)$, where $h$ and $k$ are the discretization parameters in the space and time directions respectively.
The current  work is in continuation of our  work  \cite{lalit}, where we have  proposed a linearized backward Euler-Galerkin FEM based on the L1 scheme  with convergence rate of  $O(h+k^{2-\alpha})$ in $L^{\infty}(0,T;L^{2}(\Omega))$ norm. This convergence analysis in \cite{lalit} was carried out for  sufficiently smooth solutions ($u \in C^{4}([0,T];H^{2}(\Omega)\cap H^{1}_{0}(\Omega))$ ) of the equation \eqref{16-9-22-1}.
\par The  solutions of the equation  \eqref{16-9-22-1} may not have required smoothness even if the given data is very smooth. This situation arises in the equation \eqref{16-9-22-1} for  $\Omega=[0,\pi],~T=1,~M=1,~f=0,~b=0$ and $u_{0}(x)=\sin x$. In this case $|u_{t}(x,t)|$ blows up as $t$ goes to zero for $a.e.~x \in [0,\pi]$ \cite[Example 2.2]{stynes2017error}. Thus, the proposed numerical scheme in \cite{lalit} may  have a loss of accuracy  for  non-smooth solutions of the equation\eqref{16-9-22-1} (Section \ref{15-10-22-4}, Table \ref{table2}). In this article, we recover this loss of accuracy by designing a robust and efficient new  linearized numerical scheme on a graded  mesh in time.
\par In this work, we study semi discrete formulation corresponding to the equation \eqref{16-9-22-1}   by discretizing the space domain through a conforming FEM \cite{thomee2007galerkin} while keeping the time variable continuous. We introduce a modified Ritz-Volterra  projection operator and derive its best approximation properties. These best approximation properties  are similar to the best approximation properties of  a standard Ritz-Volterra projection operator \cite{cannon1988non}. The best approximation properties of the modified Ritz-Volterra projection operator play a key role in  deriving error estimates of the semi discrete formulation  of the equation \eqref{16-9-22-1}.
\par We also propose a  fully discrete numerical scheme for the equation \eqref{16-9-22-1} by dividing  the  time direction into nonuniform time steps using a graded mesh. In this scheme Caputo fractional derivative is approximated by the L1 scheme based on the backward Euler difference formula \cite{lin2007finite}. We present a  new linearization technique \eqref{linearization} on a graded mesh in time for the nonlinear Kirchhoff term $M(\int_{\Omega}|\nabla u|^{2}dx)$.  To remain in the linearization setup, we use a  modified trapezoidal rule for the memory term present in the equation \eqref{16-9-22-1}.
\par We derive a priori bounds on the solution of the fully discrete numerical scheme using a new weighted  $H^{1}(\Omega)$ norm. The well-posedness of the fully discrete numerical scheme is established by a consequence of the Br\"{o}uwer fixed point theorem \cite{thomee2007galerkin}. Under limited smoothness of solutions of the equation  \eqref{16-9-22-1}, we derive local truncation errors for the approximation of nonlinearity as well as  memory term present in the equation \eqref{16-9-22-1}.  We prove that the sequence of solution of the fully discrete numerical scheme converges to the exact solution of the equation \eqref{16-9-22-1} with an accuracy rate of $O(P^{-1}+N^{-(2-\alpha)})$ in the $L^{\infty}(0,T;L^{2}(\Omega))$ as well as in the $L^{\infty}(0,T;H^{1}_{0}(\Omega))$ norm. The sharpness of  theoretical results  is illustrated by some numerical examples.\\
\noindent \textbf{The novelty of this work is summarized below.}
\begin{enumerate}
\item \textbf{Modified Ritz-Volterra projection operator \eqref{2.5}:} We modify the standard  Ritz-Volterra projection operator in such a way that it simplifies the complexities arising from the nonlocal nonlinear diffusion coefficient $M(\int_{\Omega}|\nabla u|^{2}dx)$.
\item \textbf{Graded mesh in time \eqref{SU4}:} Solutions of the linear  time-fractional PDEs have a weak singularity near initial time $t=0$ \cite[Theorem 2.1]{stynes2017error}. This phenomena causes some loss of accuracy (Section \ref{15-10-22-4}, Table \ref{table2}) for the numerical scheme given in \cite{lalit}. We recover this loss  of accuracy by constructing a numerical scheme on a  graded mesh in time.
\item \textbf{Linearization technique \eqref{linearization}:} 
The presence of the nonlocal nonlinear diffusion coefficient in the equation  \eqref{16-9-22-1}  destroys the sparsity of the Jacobian of the Newton-Raphson method \cite{gudi2012finite}. Thus, the Newton-Raphson   method becomes  highly expensive for solving the nonlinear system arising from the fully discrete numerical scheme of the equation \eqref{16-9-22-1}. We  minimize this cost by designing  a  new linearization technique \eqref{linearization} on a  graded mesh in time  for the nonlocal nonlinear diffusion coefficient.  We remark that this linearization technique is new in the literature which can be used for any quasilinear PDE.
 \item \textbf{Weighted $H^{1}(\Omega)$ norm \eqref{new norm}:}  Due to the approximation of the memory term on a  graded mesh in time, we cannot apply the discrete fractional Gr\"{o}nwall's inequalities \cite{jin2018numerical,ren2021sharp} to establish the global rate of convergence. We derive the  global rate of convergence for the proposed numerical scheme using a new weighted $H^{1}(\Omega)$ norm \eqref{new norm} and  standard discrete Gr\"{o}nwall's inequality.
\end{enumerate}
\noindent \textbf{Rest of  the paper is organized  as follows}. In Section \ref{15-10-22-1}, we setup some notations and assumptions on given data. We also prove well-posedness and regularity results on the  weak solution of the equation \eqref{16-9-22-1}. In Section \ref{15-10-22-2}, we introduce a modified Ritz-Volterra projection operator and derive  error estimates for the semi discrete formulation of the equation \eqref{16-9-22-1}. In Section \ref{15-10-22-3}, we construct a new linearized L1 Galerkin finite element numerical scheme on a graded mesh in time  and prove the global rate of convergence. In Section \ref{15-10-22-4}, these convergence estimates are confirmed by testing two  examples through  MATLAB  programming. This work is concluded in Section \ref{15-10-22-5}.

\section{Existence, uniqueness and regularity of the  weak solution  of  the equation  \eqref{16-9-22-1} }\label{15-10-22-1}
Let $L^{2}(\Omega)$ be the Lebesgue space of square integrable functions with the norm $\|\cdot\|$ induced by the inner product $(\cdot,\cdot)$ and $H^{m}(\Omega),~m\geq 1$ be the standard Sobolev spaces with the norm $\|\cdot\|_{m}$. We denote  the space of functions in  $H^{m}(\Omega)$ having zero trace on the boundary $\partial \Omega$ by $H^{m}_{0}(\Omega)$. Further for some space $X$,~ $L^{2}\left(0,T;X\right)$ and $ L^{\infty}\left(0,T;X\right)$ be the spaces with the norm defined by 
\begin{equation*}
    \|u\|_{L^{2}\left(0,T;X\right)}^{2}:=\int_{0}^{T}\|u(t)\|_{X}^{2}~dt,\;\;
\text{and}\; \;
  \|u\|_{L^{\infty}\left(0,T;X\right)} :=\text{ess}\sup_{0\leq t\leq T}\|u(t)\|_{X},
\end{equation*}
respectively. We also define a space $L^{2}_{\alpha}(0,T;X)$ with the following norm \cite{li2018some}
\begin{equation*}
    \|u\|^{2}_{L^{2}_{\alpha}(0,T;X)}:=\sup_{t \in (0,T)}\left(\frac{1}{\Gamma(\alpha)}\int_{0}^{t}(t-s)^{\alpha-1}\|u(s)\|_{X}^{2}~ds\right).
\end{equation*}
For any two quantities $a$ and $b$, the notation $a \lesssim b$ means that there exists a generic positive constant $C_{0}$ such that $a\leq C_{0} b$. Here constant $C_{0}$ may depends on  given data but  independent of discretization parameters.
\par Throughout this work, we  make the following assumptions on  data:\\
\textbf{(A1)}~ Initial data  $u_{0} \in H^{2}(\Omega)\cap H^{1}_{0}(\Omega)$ and source term $f \in L^{\infty}\left(0,T;L^{2}(\Omega)\right)$.
\textbf{(A2)}~ There exists a positive constant $m_{0}>0$  such that 
\begin{equation}
    M(s)\geq m_{0}>0~\text{for all}~s~ \in~ (0,\infty)~\text{and}~\left(m_{0}-4L_{M}K^{2}\right)>0,
\end{equation}
where  $L_{M}$  is a  Lipschitz constant and $ K:= \| u_{0}\|_{1}+\|f\|_{L^{\infty}(0,T;L^{2}(\Omega))}$.
\par The weak formulation corresponding to the equation \eqref{16-9-22-1} is to seek\\
$u \in L^{2}(0,T;H^{2}(\Omega)\cap H^{1}_{0}(\Omega))$ with $^{C}D^{\alpha}_{t}u \in L^{2}(0,T;L^{2}(\Omega))$ such that  following equations hold for all $v$ in $H^{1}_{0}(\Omega)$ and $a.e.~ t \in (0,T)$
\begin{equation}\label{nonlinear weak problem}
 \begin{aligned}
     \left(^{C}D_{t}^{\alpha}u,v\right)+M\left(\|\nabla u\|^{2}\right)(\nabla u,\nabla v)&=(f,v)+\int_{0}^{t}B(t,s,u(s),v)~ds,\\
     u(x,0)&=u_{0}(x)~~\text{in}~~\Omega.
 \end{aligned}
 \end{equation}
The function $B(t,s,u(s),v)$ is defined  for all $t,s$ in $[0,T]$ and for all $u$, $v$ in $H^{1}_{0}(\Omega)$ as 
\begin{equation*}
    B(t,s,u(s),v):= (b_{2}(x,t,s)\nabla u(s),\nabla v)-(b_{1}(x,t,s)u(s),\nabla v)+(b_{0}(x,t,s)u(s),v).
\end{equation*}
\noindent The Poincar\'{e} inequality and assumptions on coefficients of the memory operator $b(x,t,s)$ imply that there exists a positive constant $B_{0}$ such that for all $(t,s)$ in $[0,T]\times [0,T]$ and for all $u,v$ in $H^{1}_{0}(\Omega)$ the following inequality holds 
\begin{equation}\label{2.3}
|B(t,s,u(s),v)|\leq B_{0}\|\nabla u(s)\|\|\nabla v\|. 
\end{equation} 
\begin{thm}\cite{lalit}\label{a priori bounds } Suppose that $(A1)$ and $(A2)$ hold. Then there exists a unique solution to the problem \eqref{nonlinear weak problem} which  satisfies the following a priori bounds
\begin{align}
	\|u\|^{2}_{L^{\infty}\left(0,T;L^{2}(\Omega)\right)}+\|u\|^{2}_{L^{2}_{\alpha}(0,T;H^{1}_{0}(\Omega))} &\lesssim \left(\| u_{0}\|^{2}+\|f\|^{2}_{L^{\infty}(0,T;L^{2}(\Omega))}\right).\label{AMP1as1}\\
\|u\|^{2}_{L^{\infty}\left(0,T;H^{1}_{0}(\Omega)\right)}+\|u\|^{2}_{L^{2}_{\alpha}(0,T;H^{2}(\Omega))}& \lesssim \left(\| \nabla u_{0}\|^{2}+\|f\|^{2}_{L^{\infty}(0,T;L^{2}(\Omega))}\right).\label{AMP1as}
\end{align}
\end{thm}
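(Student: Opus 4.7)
The approach is the classical Faedo--Galerkin scheme adapted to the Caputo setting, followed by two fractional energy estimates and a Lipschitz-in-$u$ uniqueness argument. I would take $\{\phi_j\}$ to be the $L^2$-orthonormal Dirichlet eigenfunctions of $-\Delta$, set $u_n(t)=\sum_{j=1}^{n}c_j^{(n)}(t)\phi_j$, and substitute into \eqref{nonlinear weak problem} to obtain a coupled system of nonlinear fractional Volterra integral equations for the coefficients (after inverting the Caputo operator by $I_t^{\alpha}$). Local-in-time solvability of this system follows from a Banach fixed point argument, and the global bounds below extend $u_n$ to all of $[0,T]$.

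For \eqref{AMP1as1}, test the Galerkin equation with $v=u_n$ and combine the pointwise inequality $({}^{C}D^{\alpha}_{t}u_n,u_n)\geq \tfrac{1}{2}\,{}^{C}D^{\alpha}_{t}\|u_n\|^2$ with the coercivity $M\geq m_0$ and the memory bound \eqref{2.3} to get
\begin{equation*}
\tfrac{1}{2}\,{}^{C}D^{\alpha}_{t}\|u_n(t)\|^2 + m_0\,\|\nabla u_n(t)\|^2 \leq \|f(t)\|\,\|u_n(t)\| + B_0\int_0^t \|\nabla u_n(s)\|\,\|\nabla u_n(t)\|\,ds.
\end{equation*}
Applying the Riemann--Liouville integral $I_t^{\alpha}$ converts the left side to $\tfrac{1}{2}(\|u_n(t)\|^2-\|u_0\|^2)+m_0\|u_n\|^2_{L^2_\alpha(0,t;H^1_0)}$, and a Young's inequality together with a fractional Gronwall lemma absorbs the memory term to give \eqref{AMP1as1}. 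For \eqref{AMP1as}, test with $v=-\Delta u_n$ (admissible since the basis diagonalises $-\Delta$) and repeat the same argument on $\nabla u_n$, using $(M(\|\nabla u_n\|^2)\Delta u_n,\Delta u_n)\geq m_0\|\Delta u_n\|^2$.

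For uniqueness, write $w=u_1-u_2$ and test the difference equation with $w$. The Kirchhoff term yields $m_0\|\nabla w\|^2$ plus a cross term which, via the Lipschitz property of $M$ and the estimate $\bigl|\|\nabla u_1\|^2-\|\nabla u_2\|^2\bigr|\leq 2K\|\nabla w\|$ from \eqref{AMP1as}, is bounded by $4L_M K^2\|\nabla w\|^2$. The hypothesis $m_0-4L_MK^2>0$ in (A2) is precisely what keeps the left side coercive, and fractional Gronwall then forces $w\equiv 0$. The main obstacle will be passing to the limit in the nonlinear coefficient $M(\|\nabla u_n\|^2)$ during the existence step: weak convergence is insufficient because $M$ depends nonlinearly on $\|\nabla u_n\|$. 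I would overcome this by extracting a uniform bound on ${}^{C}D^{\alpha}_{t}u_n$ in a dual-valued space directly from the weak formulation and invoking a fractional Aubin--Lions compactness lemma to obtain strong $L^2(0,T;L^2(\Omega))$ convergence of $\nabla u_n$ along a subsequence, after which continuity of $M$ identifies the limit in the variational formulation.
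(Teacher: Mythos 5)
Your proposal is correct and matches, in essence, the argument behind this theorem (which the paper itself quotes from \cite{lalit}): Galerkin approximation, the Caputo coercivity inequality $({}^{C}D^{\alpha}_{t}u_{n},u_{n})\geq\tfrac{1}{2}{}^{C}D^{\alpha}_{t}\|u_{n}\|^{2}$, testing with $u_{n}$ and $-\Delta u_{n}$, Riemann--Liouville integration to generate the $L^{2}_{\alpha}$ norms, a generalized Gr\"{o}nwall inequality, uniqueness via the Lipschitz property of $M$ under $m_{0}-4L_{M}K^{2}>0$, and a fractional compactness argument to identify the limit of the nonlocal coefficient --- the same toolkit the paper re-deploys later (e.g.\ the convolution with $l(t)=t^{\alpha-1}/\Gamma(\alpha)$ followed by Gr\"{o}nwall in the semi-discrete error analysis). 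The only steps you leave implicit --- integrating the memory term by parts when testing with $-\Delta u_{n}$ (legitimate in the eigenfunction basis, where $\Delta u_{n}$ vanishes on $\partial\Omega$) and the kernel comparison $\int_{0}^{t}\|\cdot\|^{2}\,ds\lesssim T^{1-\alpha}\Gamma(\alpha)\,(l\ast\|\cdot\|^{2})(t)$ needed before applying Gr\"{o}nwall --- are routine.
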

\noindent \textbf{Regularity of the weak solution:} Authors in  \cite{huang2020optimal,kopteva2019error} studied  the following linear  time-fractional initial boundary value problem     
 \begin{equation} \label{22-8-22-1}
\begin{aligned}
^{C}D^{\alpha}_{t}u-\Delta u&=f(x,t)\quad \text{in}~ \Omega \times (0,T],\\
u(x,0)&=u_{0}(x) \quad \text{in} ~\Omega,\\
u&=0 \quad \text{in}~\partial \Omega \times (0,T].
\end{aligned}
\end{equation}
\noindent Let $\{(\lambda_{i},\phi_{i}), i=1,2,3,\dots\}$ be the eigenvalues and eigenfunctions for the boundary value problem  $-\Delta \phi_{i}=\lambda_{i}\phi_{i}~~ \text{in}~~ \Omega $ with homogeneous Dirichlet boundary condition. The eigenfunctions are normalised by taking $\|\phi_{i}\|=1$ for all $i$. From the theory of sectorial operator \cite{henry2006geometric} the fractional power $\Delta^{\mu}$ of the operator $\Delta$ is defined for each $\mu \in \mathbb{R}$ with domain 
\begin{equation*}
D(\Delta^{\mu})=\{g \in  L^{2}(\Omega);~ \sum_{j=1}^{\infty}\lambda_{j}^{2\mu}~|(g,\phi_{j})|^{2} < \infty\},
\end{equation*}
and the norm is given by 
\begin{equation}\label{22-8-22-1-1-1}
    \|g\|^{2}_{\Delta^{\mu}}= \sum_{j=1}^{\infty}\lambda_{j}^{2\mu}~|(g,\phi_{j})|^{2}.
\end{equation}
\noindent For example, $D(\Delta^{1/2})=H^{1}_{0}(\Omega)$ and $D(\Delta)=H^{2}(\Omega)\cap H^{1}_{0}(\Omega)$. They have proved the following a priori  bounds on the solution of  the problem \eqref{22-8-22-1}.
\begin{lem} \cite{huang2020optimal}
Let $q \in \mathbb{N}\cup \{0\}$. Assume that $u_{0}\in D(\Delta^{q+2})$ and $\frac{\partial^{l}f}{\partial t^{l}}(\cdot,t)\in D(\Delta^{q})$ for $t\in (0,T]$ and that $\|u_{0}\|_{\Delta^{q+2}}+\|\frac{\partial^{l}f}{\partial t^{l}}(\cdot,t)\|_{\Delta^{q+1}} \lesssim C_{0}$ for $l=1,2$. Then the solution of the problem \eqref{22-8-22-1} satisfies 
\begin{align}
\|u(\cdot,t)\|_{q}& \lesssim C_{0},~\forall~ t \in (0,T]\label{14-9-22-1}\\
\|\frac{\partial^{l}u}{\partial t^{l}}(\cdot,t)\|_{q} &\lesssim C_{0}(1+t^{\alpha-l})~~l=0,1,2~\forall ~t \in (0,T]\label{14-9-22-2}.
\end{align}
\end{lem}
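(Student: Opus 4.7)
The plan is to use the spectral representation of the solution in the Dirichlet Laplacian eigenbasis, which decouples \eqref{22-8-22-1} into an infinite family of scalar Caputo fractional ODEs, and then to invoke sharp decay estimates for Mittag--Leffler functions. Writing $u_{0}=\sum_{j}u_{0,j}\phi_{j}$, $f(\cdot,t)=\sum_{j}f_{j}(t)\phi_{j}$ and seeking $u(\cdot,t)=\sum_{j}u_{j}(t)\phi_{j}$, projection of \eqref{22-8-22-1} onto $\phi_{j}$ yields
\begin{equation*}
{}^{C}D^{\alpha}_{t}u_{j}(t)+\lambda_{j}u_{j}(t)=f_{j}(t),\qquad u_{j}(0)=u_{0,j},
\end{equation*}
whose explicit solution is
\begin{equation*}
u_{j}(t)=E_{\alpha,1}(-\lambda_{j}t^{\alpha})\,u_{0,j}+\int_{0}^{t}(t-s)^{\alpha-1}E_{\alpha,\alpha}\bigl(-\lambda_{j}(t-s)^{\alpha}\bigr)f_{j}(s)\,ds.
\end{equation*}

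Next, I would derive pointwise bounds on $u_{j}(t)$, $u_{j}'(t)$ and $u_{j}''(t)$ using the standard inequalities $|E_{\alpha,1}(-z)|,\,|E_{\alpha,\alpha}(-z)|\lesssim (1+z)^{-1}$ for $z\geq 0$, together with the identity $\frac{d}{dt}E_{\alpha,1}(-\lambda t^{\alpha})=-\lambda t^{\alpha-1}E_{\alpha,\alpha}(-\lambda t^{\alpha})$. For the convolution term, I would integrate by parts in $s$ once (respectively twice) to transfer one (respectively two) time derivatives onto $f_{j}$, so that the hypothesised regularity $\partial_{t}^{l}f(\cdot,t)\in D(\Delta^{q+1})$ can be exploited; the boundary term at $s=0$ is controlled by $f(\cdot,0)$, which is bounded via the fractional integration of $\partial_t f$. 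The key technical inequality is the spectral-weight/time-singularity trade-off
\begin{equation*}
\lambda_{j}^{r}\,|E_{\alpha,\beta}(-\lambda_{j}t^{\alpha})|\;\lesssim\; t^{-r\alpha},\qquad 0\le r\le 1,
\end{equation*}
which absorbs each factor of $\lambda_{j}$ created by differentiation into a factor $t^{-\alpha}$; this is precisely how the two extra Laplacian powers of regularity assumed on the data ($q+2$ for $u_{0}$ and $q+1$ for $\partial_{t}^{l}f$) are consumed to produce the singular factor $t^{\alpha-l}$ for $l=1,2$.

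Finally, I would square the pointwise bounds, multiply by the appropriate $\lambda_{j}^{2q}$, sum in $j$ via Parseval's identity in \eqref{22-8-22-1-1-1}, and invoke the continuous embedding of $D(\Delta^{q/2})$ into $H^{q}(\Omega)$, coming from elliptic regularity for the Dirichlet Laplacian on the smooth convex domain $\Omega$, in order to obtain \eqref{14-9-22-1} and \eqref{14-9-22-2}.

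The main obstacle will be carrying out the second-derivative estimate with sharp constants. Differentiating the convolution twice produces a product of the singular kernel $(t-s)^{\alpha-1}$ with the derivative of $E_{\alpha,\alpha}(-\lambda_{j}(t-s)^{\alpha})$, so the integrand is singular both at $s=t$ (from the kernel) and potentially large for small $s$ (from a factor $\lambda_j$ introduced by differentiation). I would handle this by splitting the integral near $s=t$, integrating by parts on one piece to move a derivative onto $f_{j}$ and using the integrability of $(t-s)^{\alpha-1}$ on the other, while tracking the trade-off between the spectral factors $\lambda_{j}$ and the temporal singularity through the Mittag--Leffler decay, so as to obtain the sharp $t^{\alpha-2}$ blow-up rather than a weaker bound.
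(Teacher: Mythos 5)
This lemma is not proved in the paper at all: it is quoted verbatim from \cite{huang2020optimal}, so there is no in-paper argument to compare against. Your sketch — eigenfunction expansion, the explicit Mittag--Leffler representation of each mode, the decay bound $|E_{\alpha,\beta}(-z)|\lesssim (1+z)^{-1}$ with the spectral-weight trade-off $\lambda_{j}^{r}|E_{\alpha,\beta}(-\lambda_{j}t^{\alpha})|\lesssim t^{-r\alpha}$, integration by parts in the convolution to move derivatives onto $f_{j}$, and Parseval summation against $\lambda_{j}^{2q}$ — is precisely the standard route taken in that reference (in the tradition of Sakamoto--Yamamoto and Stynes et al.), so your proposal is essentially the same approach as the cited source and is sound as an outline.
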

\noindent Further, authors in \cite{jin2019subdiffusion} extended the problem \eqref{22-8-22-1} to the following subdiffusion problem with time dependent coefficient
\begin{equation} \label{22-8-22-1-1}
\begin{aligned}
^{C}D^{\alpha}_{t}u-\nabla \cdot(a(x,t)\nabla u)&=f(x,t)\quad \text{in}~ \Omega \times (0,T],\\
u(x,0)&=u_{0}(x) \quad \text{in} ~\Omega,\\
u&=0 \quad \text{in}~\partial \Omega \times (0,T],
\end{aligned}
\end{equation}
where $a(x,t)\in \mathbb{R}^{d \times d}$ is a symmetric  matrix valued diffusion coefficient such that for some constant $\lambda\geq 1$ 
\begin{equation*}
\begin{aligned}
\lambda^{-1}|\xi|^{2} \leq a(x,t)\xi\cdot \xi \leq \lambda |\xi|^{2}~~~\forall\xi\in \mathbb{R}^{d \times d}~~~\forall~(x,t)\in\Omega \times (0,T],\\
|\partial_{t}a(x,t)|+|\nabla a (x,t)|+|\nabla \partial_{t}a(x,t)| \lesssim C_{0}~~~\forall~(x,t)\in\Omega \times (0,T].
\end{aligned}
\end{equation*}
They have established the following regularity results for the solution of \eqref{22-8-22-1-1} using perturbation argument.
\begin{lem}\cite{jin2019subdiffusion} Suppose $u_{0}\in H^{2}(\Omega) \cap H^{1}_{0}(\Omega)$ and $f=0$ in \eqref{22-8-22-1-1}. Then the solution $u$ of \eqref{22-8-22-1-1} undergoes 
\begin{align}
\|u(t)\|_{2} \lesssim \|u_{0}\|_{2}~~~\forall~t\in [0,T],\label{18-12-22-1} \\
\|\frac{\partial u}{\partial t}\| \lesssim t^{\alpha-1}\|u_{0}\|_{2}~~~\forall~t\in (0,T].\label{18-12-22-1-2}
\end{align} 
\end{lem}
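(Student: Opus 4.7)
The approach is a perturbation argument: freeze the coefficient $a(x,t)$ at some reference time, treat the residual as a forcing, and lift the regularity estimates available for the constant-in-time operator to the time-dependent case via the Mittag--Leffler/Duhamel representation.

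First, fix a reference time $t_{0}\in[0,T]$ and introduce the elliptic operator
\begin{equation*}
A_{0}w:=-\nabla\cdot(a(x,t_{0})\nabla w),\qquad w\in H^{2}(\Omega)\cap H^{1}_{0}(\Omega),
\end{equation*}
which is self-adjoint and positive definite, with $\|A_{0}w\|$ equivalent to $\|w\|_{2}$ by elliptic regularity. The equation \eqref{22-8-22-1-1} (with $f=0$) then becomes
\begin{equation*}
{}^{C}D_{t}^{\alpha}u+A_{0}u=g(t),\qquad g(t):=\nabla\cdot\bigl((a(x,t)-a(x,t_{0}))\nabla u(t)\bigr),
\end{equation*}
and the bounds $|\partial_{t}a|+|\nabla\partial_{t}a|\lesssim C_{0}$ from the setup of \eqref{22-8-22-1-1} give the pointwise estimate $\|g(t)\|\lesssim |t-t_{0}|\,\|u(t)\|_{2}$.

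Second, use the spectral solution formula associated with $A_{0}$,
\begin{equation*}
u(t)=E_{\alpha}(-t^{\alpha}A_{0})u_{0}+\int_{0}^{t}(t-s)^{\alpha-1}E_{\alpha,\alpha}(-(t-s)^{\alpha}A_{0})\,g(s)\,ds,
\end{equation*}
together with the standard smoothing bounds
\begin{equation*}
\|A_{0}^{\nu}E_{\alpha}(-t^{\alpha}A_{0})v\|+\|A_{0}^{\nu}E_{\alpha,\alpha}(-t^{\alpha}A_{0})v\|\lesssim t^{-\alpha\nu}\|v\|,\qquad 0\le\nu\le 1.
\end{equation*}
Apply $A_{0}$ to both sides and choose $t_{0}=t$ inside the frozen-coefficient trick, so that the factor $|s-t|$ in the bound on $g$ combines with the kernel $(t-s)^{\alpha-1}$ to yield an integrable singularity. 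Taking $L^{2}$ norms produces the Volterra inequality
\begin{equation*}
\|u(t)\|_{2}\lesssim\|u_{0}\|_{2}+\int_{0}^{t}(t-s)^{\alpha-1}\,\|u(s)\|_{2}\,ds,
\end{equation*}
to which a generalised (fractional) Gr\"{o}nwall inequality applies, giving \eqref{18-12-22-1} uniformly on $[0,T]$. For \eqref{18-12-22-1-2}, use the identity
\begin{equation*}
\frac{\partial}{\partial t}E_{\alpha}(-t^{\alpha}A_{0})u_{0}=-t^{\alpha-1}A_{0}E_{\alpha,\alpha}(-t^{\alpha}A_{0})u_{0},
\end{equation*}
which directly produces the weight $t^{\alpha-1}$ paired with $\|A_{0}u_{0}\|\lesssim\|u_{0}\|_{2}$. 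The time derivative of the Duhamel integral is then bounded by splitting $[0,t]$ into $[0,t/2]$ and $[t/2,t]$, exploiting the already established bound \eqref{18-12-22-1} on $\|u(s)\|_{2}$ inside $g(s)$.

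The main obstacle I expect is this last step: differentiating the Duhamel integral introduces an extra $(t-s)^{\alpha-1}$ singularity at $s=t$, and one must use the vanishing $|a(x,s)-a(x,t)|\lesssim|s-t|$ to absorb it in order to recover precisely the short-time exponent $t^{\alpha-1}$ rather than a worse blow-up. Balancing the integrability of this kernel against the smoothing of $E_{\alpha,\alpha}(-(t-s)^{\alpha}A_{0})$, while uniformly controlling constants in $t_{0}$, is the crux of the calculation; the frozen-coefficient perturbation machinery of \cite{jin2019subdiffusion} (itself modelled on the corresponding elliptic result for $-\Delta$ in \cite{huang2020optimal}) is what makes this bookkeeping feasible.
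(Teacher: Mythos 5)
This lemma is quoted from \cite{jin2019subdiffusion}; the paper supplies no proof of its own and only records that the result was obtained there ``using perturbation argument,'' and your frozen-coefficient decomposition with the Duhamel representation, Mittag--Leffler smoothing estimates, the bound $\|g(t)\|\lesssim|t-t_{0}|\,\|u(t)\|_{2}$, and a generalised Gr\"{o}nwall step is precisely that argument. Hence your proposal is correct in outline and takes essentially the same route as the cited source the paper relies on, including correctly identifying the differentiation of the Duhamel term (with the frozen time moving with $t$) as the delicate point in proving \eqref{18-12-22-1-2}.
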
 
\noindent In this work, the problem \eqref{16-9-22-1} contains a Kirchhoff type nonlinearity due to which we cannot apply techniques used in \cite{jin2019subdiffusion, kopteva2019error, sakamoto2011initial} to derive regularity properties \eqref{18-12-22-1}-\eqref{18-12-22-1-2} of the   solution of \eqref{16-9-22-1}. Here, we solve this problem partially and find it difficult to prove remaining part (temporal regularity of $u$) at this moment. we will address it into another work. 
\begin{thm}
 Suppose that (A1) and (A2) hold. Then the solution $u$ of \eqref{16-9-22-1} satisfies following estimates 
\begin{align}
\|u(t)\|_{2} &\lesssim C~~~\forall~t\in [0,T], \label{18-12-22-2}\\
\|u(t)-u(\tau)\| &\lesssim |t-\tau|^{\alpha}~~~\forall~t,\tau\in [0,T].\label{18-12-22-3}
\end{align}
\end{thm}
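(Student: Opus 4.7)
The strategy hinges on viewing \eqref{16-9-22-1} as a linear time-fractional PDE in disguise. By Theorem~\ref{a priori bounds } we have $u\in L^{\infty}(0,T;H^{1}_{0}(\Omega))$, so the Kirchhoff coefficient $m(t):=M(\|\nabla u(t)\|^{2})$ is pinched uniformly: $m_{0}\le m(t)\le m_{\max}$, with $m_{\max}$ controlled by the Lipschitz constant $L_{M}$ and the a priori $H^{1}$ bound. Since $m(t)$ does not depend on $x$, the equation
\[
{}^{C}D_{t}^{\alpha}u - m(t)\,\Delta u \;=\; F(t), \qquad F(t):=f(t)+\int_{0}^{t}b(\cdot,t,s)\,u(s)\,ds,
\]
is linear in $u$ with a spatially constant, positive, bounded, time-varying coefficient, and the operator $m(t)(-\Delta)$ diagonalises in the Dirichlet eigenbasis $\{\phi_{j},\lambda_{j}\}$.

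For \eqref{18-12-22-2}, I would work at the spectral level: each mode $u_{j}(t)=(u(t),\phi_{j})$ solves the scalar fractional ODE ${}^{C}D_{t}^{\alpha}u_{j}+m(t)\lambda_{j}u_{j}=F_{j}(t)$. Multiplying by $\lambda_{j}u_{j}$, summing in $j$, and using the pointwise Caputo positivity $({}^{C}D_{t}^{\alpha}v,v)\ge\tfrac{1}{2}\,{}^{C}D_{t}^{\alpha}\|v\|^{2}$ produces
\[
\tfrac{1}{2}\,{}^{C}D_{t}^{\alpha}\|\nabla u\|^{2}+m_{0}\|\Delta u\|^{2}\;\le\;\|F(t)\|\,\|\Delta u(t)\|.
\]
The memory contribution to $\|F(t)\|$ is dominated by $\int_{0}^{t}\|u(s)\|_{2}\,ds$ via the smoothness of $b_{0},b_{1},b_{2}$, and on the convex smooth domain $\Omega$ one has the elliptic regularity $\|u\|_{2}\lesssim\|\Delta u\|$. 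Combining this energy identity with the integrated $L^{2}_{\alpha}(0,T;H^{2})$ bound already provided by Theorem~\ref{a priori bounds } and closing through a fractional Gr\"onwall argument should upgrade the integrated $H^{2}$ control to the pointwise one $\|u(t)\|_{2}\lesssim C$. A useful complementary device is the PDE rearrangement $m(t)\Delta u=-F+{}^{C}D_{t}^{\alpha}u$, which converts uniform control of $\|{}^{C}D_{t}^{\alpha}u(t)\|$ directly into uniform control of $\|\Delta u(t)\|$.

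The Hölder bound \eqref{18-12-22-3} is then a short calculation. Once $\|u(t)\|_{2}\lesssim C$, the rearranged equation forces $g(t):={}^{C}D_{t}^{\alpha}u(t)\in L^{\infty}(0,T;L^{2}(\Omega))$. Using the inversion identity $u(t)-u_{0}=\tfrac{1}{\Gamma(\alpha)}\int_{0}^{t}(t-s)^{\alpha-1}g(s)\,ds$ and, for $0\le\tau<t\le T$, the splitting
\[
u(t)-u(\tau)=\frac{1}{\Gamma(\alpha)}\int_{\tau}^{t}(t-s)^{\alpha-1}g(s)\,ds+\frac{1}{\Gamma(\alpha)}\int_{0}^{\tau}\bigl[(t-s)^{\alpha-1}-(\tau-s)^{\alpha-1}\bigr]g(s)\,ds,
\]
the first integral is dominated by $C(t-\tau)^{\alpha}/\alpha$ immediately. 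The second integrates exactly to $C\bigl[\tau^{\alpha}-t^{\alpha}+(t-\tau)^{\alpha}\bigr]/\alpha$, which is controlled by the same $C(t-\tau)^{\alpha}$ because of the subadditivity $t^{\alpha}\le\tau^{\alpha}+(t-\tau)^{\alpha}$ valid for $\alpha\in(0,1)$.

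The main obstacle is the $H^{2}$ closure step. The standard regularity theory for linear time-fractional PDEs with variable coefficient (Jin--Li--Zhou, Sakamoto--Yamamoto, Kopteva--Meng) demands $|\partial_{t}m|\lesssim C$, which here would require temporal regularity of the unknown $u$—exactly the point the authors explicitly acknowledge is unresolved. The spectral/energy route above circumvents this by using only the pointwise bound $m_{0}\le m(t)\le m_{\max}$, but then pays the price of needing a delicate fractional Gr\"onwall closure plus an extra bootstrap to upgrade the $L^{2}_{\alpha}$-in-time $H^{2}$ control from Theorem~\ref{a priori bounds } to the pointwise $L^{\infty}$-in-time control asserted in \eqref{18-12-22-2}.
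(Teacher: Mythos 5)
Your treatment of the H\"older estimate \eqref{18-12-22-3} is essentially the paper's argument (same splitting of $u(t)-u(\tau)$ into the two kernel integrals, same exact evaluation and subadditivity of $s\mapsto s^{\alpha}$), so that part is fine \emph{conditional on} \eqref{18-12-22-2}. The genuine gap is in \eqref{18-12-22-2} itself. Your energy route --- multiply the $j$-th mode equation by $\lambda_{j}u_{j}$, sum, use $\left({}^{C}D^{\alpha}_{t}v,v\right)\ge\tfrac12\,{}^{C}D^{\alpha}_{t}\|v\|^{2}$ --- yields
\begin{equation*}
\tfrac12\,{}^{C}D^{\alpha}_{t}\|\nabla u\|^{2}+m_{0}\|\Delta u\|^{2}\lesssim \|F\|^{2},
\end{equation*}
and after convolving with $l(t)=t^{\alpha-1}/\Gamma(\alpha)$ this gives pointwise-in-time control of $\|\nabla u\|$ and only \emph{integrated} ($L^{2}_{\alpha}$-in-time) control of $\|\Delta u\|$; that is exactly estimate \eqref{AMP1as} of Theorem \ref{a priori bounds }, already known. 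No fractional Gr\"onwall applied to this inequality can turn the convolution term $\left(l\ast\|\Delta u\|^{2}\right)(t)$ into a bound on $\|\Delta u(t)\|$ at the endpoint $t$, so the asserted ``upgrade to pointwise $H^{2}$'' is precisely the step you leave unproved. The ``complementary device'' $m(t)\Delta u={}^{C}D^{\alpha}_{t}u-F$ is circular: uniform-in-time control of $\|{}^{C}D^{\alpha}_{t}u(t)\|$ is equivalent (through this very identity) to the uniform $\|\Delta u(t)\|$ bound you are trying to prove, and obtaining it independently would require the temporal regularity that the authors explicitly state they cannot establish.

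The paper closes this step by a different mechanism that your proposal does not contain: for $f=b=0$ it exploits the sign structure \emph{mode by mode}. Each Fourier coefficient solves the scalar Caputo problem ${}^{C}D^{\alpha}_{t}\alpha_{i}=-\lambda_{i}M\left(\|\nabla u(t)\|^{2}\right)\alpha_{i}$ with $\lambda_{i}M(\cdot)\ge \lambda_{i}m_{0}>0$, and the generalized Gr\"onwall / Mittag--Leffler comparison gives the maximum-principle-type bound $|\alpha_{i}(t)|\le|(u_{0},\phi_{i})|$ for every $i$ and every $t$. Summing with weights $1$, $\lambda_{i}$, $\lambda_{i}^{2}$ then delivers $\|u(t)\|$, $\|\nabla u(t)\|$, $\|\Delta u(t)\|\lesssim\|u_{0}\|_{2}$ pointwise in time, with no norm-level Gr\"onwall closure at all; the H\"older estimate then follows from the Volterra integral form exactly as you wrote it, and the forced case is asserted to follow by similar (Duhamel-type) steps. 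This modewise comparison bound is the missing idea: without it (or some substitute such as testing the equation against $\Delta^{2}u$ with the attendant extra data/memory-term regularity), your argument establishes nothing beyond Theorem \ref{a priori bounds } and the claim \eqref{18-12-22-2} remains open.
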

\begin{proof} First we consider the case $f=b=0$ in \eqref{16-9-22-1}, then the weak formulation of the problem \eqref{16-9-22-1}  is given by 
\begin{equation} \label{18-12-22-4}
\begin{aligned}
\left(^{C}D^{\alpha}_{t}u,v\right)+M\left(\|\nabla u\|^{2}\right)(\nabla u,\nabla v)&=0\quad \forall~v \in H^{1}_{0}(\Omega), ~a.e.~t\in (0,T],\\
u(x,0)&=u_{0}(x) \quad \text{in} ~\Omega.
\end{aligned}
\end{equation}
By putting $u$ as a Fourier series expansion i.e., $u(x,t)=\sum_{i=1}^{\infty}\alpha_{i}(t)\phi_{i}(x)$ in \eqref{18-12-22-4}, we get 
\begin{equation}\label{18-12-22-5}
\begin{aligned}
^{C}D^{\alpha}_{t}\alpha_{i}(t)&=-\lambda_{i}M\left(\|\nabla u(t)\|^{2}\right)\alpha_{i}(t)~~\forall~i=1,2,3,\dots\\
\alpha_{i}(0)&=(u_{0},\phi_{i})~~\forall~i=1,2,3,\dots.
\end{aligned}
\end{equation}
The equation \eqref{18-12-22-5} is converted into following integral equation 
\begin{equation*}\label{18-12-22-6}
\begin{aligned}
\alpha_{i}(t)&=\alpha_{i}(0)-\lambda_{i}\int_{0}^{t}\frac{(t-s)^{\alpha-1}}{\Gamma(\alpha)}M\left(\|\nabla u(s)\|^{2}\right)\alpha_{i}(s)~ds~~\forall~i=1,2,3,\dots.
\end{aligned}
\end{equation*}
Take modulus on both sides and apply (A2) together with generalised Gr\"{o}nwall's inequality \cite{almeida2017gronwall} to deduce
\begin{equation*}\label{18-12-22-7}
|\alpha_{i}(t)|\leq |\alpha_{i}(0)|E_{\alpha}[\lambda_{i}t^{\alpha}] \leq |(u_{0},\phi_{i})|~~\forall~i=1,2,3,\dots,
\end{equation*}
where $E_{\alpha}(\cdot)$ is the Mittag-Leffler function defined in \cite{podlubny1998fractional}.
Further, by definition \eqref{22-8-22-1-1-1} we obtain 
\begin{equation}\label{18-12-22-8}
\|u\|^{2}=\sum_{i=1}^{\infty}|\alpha_{i}(t)|^{2} \lesssim \sum_{i=1}^{\infty} |(u_{0},\phi_{i})|^{2} =\|u_{0}\|^{2},
\end{equation}
\begin{equation}\label{18-12-22-9}
\|\nabla u\|^{2}=\sum_{i=1}^{\infty}\lambda_{i}|\alpha_{i}(t)|^{2} \lesssim \sum_{i=1}^{\infty} \lambda_{i}|(u_{0},\phi_{i})|^{2} =\|u_{0}\|^{2}_{1},
\end{equation}
\begin{equation}\label{18-12-22-10}
\|\Delta u\|^{2}=\sum_{i=1}^{\infty}\lambda_{i}^{2}|\alpha_{i}(t)|^{2} \lesssim \sum_{i=1}^{\infty} \lambda_{i}^{2} |(u_{0},\phi_{i})|^{2} =\|u_{0}\|^{2}_{2}.
\end{equation}
By combining the estimates \eqref{18-12-22-8}-\eqref{18-12-22-10}, we get $\|u(t)\|_{2} \lesssim \|u_{0}\|_{2}~\forall~t\in [0,T]$. Now consider the equation \eqref{16-9-22-1} with $f=b=0$, we have 
\begin{equation*} \label{18-12-22-11}
\begin{aligned}
	^{C}D^{\alpha}_{t}u&=M\left(\int_{\Omega}|\nabla u|^{2}~dx\right)\Delta u~\text{in}~ \Omega \times (0,T],\\
	u(x,0)&=u_{0}(x) \quad \text{in} ~\Omega,\\
	u(x,t)&=0 \quad \text{on} ~\partial \Omega \times [0,T].
 \end{aligned}
\end{equation*}
Then $u$ satisfies the following integral equation 
\begin{equation*}\label{18-12-22-12}
\begin{aligned}
u(t)&=u_{0}+\int_{0}^{t}\frac{(t-s)^{\alpha-1}}{\Gamma(\alpha)}M\left(\|\nabla u(s)\|^{2}\right)\Delta u(s)~ds.
\end{aligned}
\end{equation*}
Estimate \eqref{18-12-22-10} and (A2) imply 
\begin{equation*}\label{18-12-22-13}
\begin{aligned}
\|u(t)\|\lesssim \left(1+t^{\alpha}\right)\|u_{0}\|_{2}.
\end{aligned}
\end{equation*}
To prove the estimate \eqref{18-12-22-3}, let us take $\tau<t$ then 
\begin{equation*}
\begin{aligned}
u(t)-u(\tau)&=\int_{0}^{t}\frac{(t-s)^{\alpha-1}}{\Gamma(\alpha)}M\left(\|\nabla u(s)\|^{2}\right)\Delta u(s)~ds\\
&-\int_{0}^{\tau}\frac{(\tau-s)^{\alpha-1}}{\Gamma(\alpha)}M\left(\|\nabla u(s)\|^{2}\right)\Delta u(s)~ds,
\end{aligned}
\end{equation*}
which can also be rewritten as 
\begin{equation*}
\begin{aligned}
u(t)-u(\tau)&=\frac{1}{\Gamma(\alpha)}\int_{0}^{\tau}\left((t-s)^{\alpha-1}-(\tau-s)^{\alpha-1}\right)M\left(\|\nabla u(s)\|^{2}\right)\Delta u(s)~ds\\
&+\frac{1}{\Gamma(\alpha)}\int_{\tau}^{t}(t-s)^{\alpha-1}M\left(\|\nabla u(s)\|^{2}\right)\Delta u(s)~ds.
\end{aligned}
\end{equation*}
Using estimates \eqref{18-12-22-8}-\eqref{18-12-22-10} along with (A2), we arrive at 
\begin{equation*}
\begin{aligned}
\|u(t)-u(\tau)\|&\lesssim \frac{1}{\Gamma(\alpha)}\int_{0}^{\tau}\left|(t-s)^{\alpha-1}-(\tau-s)^{\alpha-1}\right|~ds\\
&+\frac{1}{\Gamma(\alpha)}\int_{\tau}^{t}(t-s)^{\alpha-1}ds.
\end{aligned}
\end{equation*}
For $\tau<t$, $(t-s)^{\alpha-1}-(\tau-s)^{\alpha-1}<0$. Therefore,
\begin{equation*}
\begin{aligned}
\|u(t)-u(\tau)\|&\lesssim \frac{1}{\Gamma(\alpha)}\int_{0}^{\tau}\left((\tau-s)^{\alpha-1}-(t-s)^{\alpha-1}\right)~ds\\
&+\frac{1}{\Gamma(\alpha)}\int_{\tau}^{t}(t-s)^{\alpha-1}~ds.\\
&\lesssim (t-\tau)^{\alpha}+\tau^{\alpha}-(t)^{\alpha}+(t-\tau)^{\alpha} \lesssim (t-\tau)^{\alpha}.
\end{aligned}
\end{equation*}
Using similar arguments one can prove $\|u(\tau)-u(t)\|\lesssim (\tau-t)^{\alpha}$. Hence proof of estimate \eqref{18-12-22-3} follows for $f=b=0$. For the case $f\neq 0$ and $b\neq 0$, the estimate \eqref{18-12-22-3} can be proved easily using similar steps. 
\end{proof}
\noindent In this paper, we derive semi discrete and fully discrete error estimates under the assumption that the solution $u$ of the equation \eqref{16-9-22-1} satisfies temporal regularity \eqref{14-9-22-2}.

\section{Semi discrete formulation of the  equation \eqref{16-9-22-1} and its error estimates}\label{15-10-22-2}
Let $\mathbb{T}_{h}$ be a shape regular and quasi-uniform triangulation of the domain $\Omega$ into d-simplexes  denoted by $T_{h}$, called finite elements. Then over the triangulation $\mathbb{T}_{h}$, we define a continuous piecewise linear finite element space $X_{h}$ \cite{thomee2007galerkin} given by 
\begin{equation*}
    X_{h}=\left\{v_{h}\in H^{1}_{0}(\Omega): v_{h}~\text{is a linear function }~\forall~T_{h} \in \mathbb{T}_{h} \right\}.
\end{equation*}
\noindent Semi discrete formulation corresponding to the equation \eqref{16-9-22-1} is to seek $u_{h} \in X_{h}$ such that the following equations hold for $a.e. ~t\in (0,T]$ and  for all $v_{h} \in X_{h}$
\begin{equation}\label{semidiscrete}
 \begin{aligned}
     \left(^{C}D^{\alpha}_{t}u_{h},v_{h}\right)+M\left(\|\nabla u_{h}\|^{2}\right)(\nabla u_{h},\nabla v_{h})&=(f_{h},v_{h})+\int_{0}^{t}B(t,s,u_{h}(s),v_{h})~ds,\\
     u_{h}(x,0)&=u_{h}^{0}(x)~~\text{in}~~\mathbb{T}_{h},
 \end{aligned}
 \end{equation}
 where $f_{h}$ is the  $L^{2}$-projection of $f$ into $X_{h}$  and $u_{h}^{0}$  is  the Ritz projection of $u_{0}$ into $X_{h}$. Recall that, for any function $u$ the $L^{2}$-projection $P_{h}u$  is defined  by
  \begin{equation}
    (P_{h}u,v_{h})=(u,v_{h})~~\text{for all}~ v_{h}~\text{in}~X_{h},\label{A2a}
\end{equation}
which satisfies the following $H^{1}$ stability result \cite{bramble2002stability}
\begin{equation}\label{14-9-22-4-2}
\|\nabla P_{h}v\|\lesssim \|\nabla v\|~\forall~v~\in H^{1}_{0}(\Omega).
\end{equation}
\noindent and the Ritz projection $R_{h}u$ is defined by
 \begin{equation}
    (\nabla R_{h}u, \nabla v_{h})=(\nabla u,\nabla v_{h})~~\text{for all}~ v_{h}~\text{in}~X_{h}.~ \label{A2}
\end{equation}
\begin{thm} \label{a priori bounds-1 } \cite{lalit} Suppose that $(A1)$ and $(A2)$ hold. Then there exists a unique solution to the problem \eqref{semidiscrete}  which  satisfies the following a priori bounds
\begin{align}
	\|u_{h}\|^{2}_{L^{\infty}\left(0,T;L^{2}(\Omega)\right)}+\|u_{h}\|^{2}_{L^{2}_{\alpha}(0,T;H^{1}_{0}(\Omega))} &\lesssim \left(\| u_{0}\|^{2}+\|f\|^{2}_{L^{\infty}(0,T;L^{2}(\Omega))}\right).\label{AMP1as1-1}\\
\|u_{h}\|^{2}_{L^{\infty}\left(0,T;H^{1}_{0}(\Omega)\right)}& \lesssim \left(\| \nabla u_{0}\|^{2}+\|f\|^{2}_{L^{\infty}(0,T;L^{2}(\Omega))}\right).\label{AMP1as-1}
\end{align}
\end{thm}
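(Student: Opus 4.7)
The plan is to mirror the energy-method proof of Theorem \ref{a priori bounds } at the discrete level by replacing $-\Delta$ with the discrete Laplacian $-\Delta_h:X_h\to X_h$ defined by $(-\Delta_h w_h,v_h)=(\nabla w_h,\nabla v_h)$ for all $v_h\in X_h$. For existence and uniqueness, I would fix a basis $\{\phi_1,\dots,\phi_N\}$ of $X_h$ and expand $u_h(t)=\sum_j U_j(t)\phi_j(x)$; substituting into \eqref{semidiscrete} reduces the problem to a finite-dimensional system of nonlinear fractional Volterra integral equations for $U(t)$. Since $M$ is Lipschitz and the memory kernel is bounded, a standard Picard iteration converges on a short time interval, and uniqueness follows from a fractional Gronwall argument applied to the difference of two solutions. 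Global existence on $[0,T]$ then follows from the a priori bounds proved next, which rule out finite-time blow-up of $U(t)$.

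For the first estimate I would test \eqref{semidiscrete} with $v_h=u_h$ and use three ingredients: the Alikhanov-type positivity $({}^{C}D^{\alpha}_{t}u_h,u_h)\ge\tfrac12\,{}^{C}D^{\alpha}_{t}\|u_h\|^{2}$, the coercivity $M(\|\nabla u_h\|^{2})\|\nabla u_h\|^{2}\ge m_0\|\nabla u_h\|^{2}$ from (A2), and the estimate $|B(t,s,u_h(s),u_h)|\le B_0\|\nabla u_h(s)\|\|\nabla u_h\|$ from \eqref{2.3}. Applying Young's inequality to absorb a fraction of $m_0\|\nabla u_h\|^{2}$ from the source and memory contributions into the left-hand side, and then applying the fractional integral $I^{\alpha}$ together with a generalized (weakly singular) Gronwall inequality, yields the bound in $L^{\infty}(0,T;L^{2}(\Omega))\cap L^{2}_{\alpha}(0,T;H^{1}_{0}(\Omega))$.

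For the second estimate I would test \eqref{semidiscrete} with $v_h=-\Delta_h u_h$. Since ${}^{C}D^{\alpha}_{t}$ acts only in time it commutes with $-\Delta_h$, so $({}^{C}D^{\alpha}_{t}u_h,-\Delta_h u_h)=({}^{C}D^{\alpha}_{t}\nabla u_h,\nabla u_h)\ge\tfrac12\,{}^{C}D^{\alpha}_{t}\|\nabla u_h\|^{2}$, while the Kirchhoff term becomes $M(\|\nabla u_h\|^{2})\|\Delta_h u_h\|^{2}\ge m_0\|\Delta_h u_h\|^{2}$. Bounding the source by $(f_h,-\Delta_h u_h)\le\tfrac{m_0}{4}\|\Delta_h u_h\|^{2}+C\|f\|^{2}$ via Young's inequality produces a differential inequality of the form $\tfrac12\,{}^{C}D^{\alpha}_{t}\|\nabla u_h\|^{2}+\tfrac{m_0}{2}\|\Delta_h u_h\|^{2}\le C\|f\|^{2}+|\text{memory}|$, to which a fractional Gronwall argument would yield the $L^{\infty}(0,T;H^{1}_{0}(\Omega))$ estimate, provided the memory term can be controlled in terms of $\|\Delta_h u_h\|$ rather than $\|\nabla(-\Delta_h u_h)\|$.

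The main obstacle is precisely this memory contribution $\int_0^t B(t,s,u_h(s),-\Delta_h u_h)\,ds$: the direct bound via \eqref{2.3} gives $|B(t,s,u_h(s),-\Delta_h u_h)|\le B_0\|\nabla u_h(s)\|\|\nabla(-\Delta_h u_h)\|$, and $\|\nabla(-\Delta_h u_h)\|$ is not uniformly controlled in $h$ by $\|\Delta_h u_h\|$ without invoking an unfavourable inverse estimate. To sidestep this I would expand $B$ in its explicit form and perform element-wise integration by parts to move one derivative off $-\Delta_h u_h$; since $\nabla u_h(s)$ is piecewise constant on each element, the element-interior contribution collapses to terms like $(\nabla b_2\cdot\nabla u_h(s),-\Delta_h u_h)$ controlled by $\|\nabla u_h(s)\|\|\Delta_h u_h\|$ using the smoothness of $b_2$, while the inter-element jump contributions arising from the discontinuity of $\nabla u_h$ across faces are tracked using standard trace and inverse estimates, and then absorbed using the first bound already established (which controls $\|u_h\|_{L^{2}_{\alpha}(0,T;H^{1}_{0}(\Omega))}$) together with Young's inequality. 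This closes the Gronwall argument and delivers the second bound.
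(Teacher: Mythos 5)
The paper does not actually prove this theorem; it is quoted from \cite{lalit}. That said, your overall plan --- Galerkin reduction to a finite-dimensional fractional Volterra system for well-posedness, then testing \eqref{semidiscrete} with $v_h=u_h$ and with $v_h=-\Delta_h u_h$, using $({}^{C}D^{\alpha}_{t}u_h,u_h)\ge\tfrac12\,{}^{C}D^{\alpha}_{t}\|u_h\|^{2}$, (A2), \eqref{2.3}, Young's inequality, and a weakly singular Gr\"onwall inequality after applying the fractional integral --- is exactly the energy route this paper uses in its analogous estimates (see the derivation of \eqref{16-10-22-6} and the fully discrete Theorem \ref{12345-2}), so the existence part and the first bound \eqref{AMP1as1-1} are fine in outline. (For uniqueness note that the difference of two solutions produces the term $[M(\|\nabla u_h\|^2)-M(\|\nabla w_h\|^2)](\nabla w_h,\nabla v_h)$, so you need the gradient a priori bound together with the smallness condition $m_0-4L_MK^2>0$ from (A2), not just Lipschitzness of $M$; this is routine but should be said.)

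The genuine gap is in your treatment of the memory term in the $H^1_0$ estimate. After element-wise integration by parts of $(b_2\nabla u_h(s),\nabla(-\Delta_h u_h))$, the element-interior residue is indeed harmless, but the interelement face terms carry the jump of $b_2\nabla u_h(s)\cdot n$, and for the discrete solution itself (there is no smooth underlying function here, unlike in consistency arguments) this jump is generically of the same order as $\nabla u_h$. Trace and inverse estimates then bound these face terms only by $h^{-1}\|\nabla u_h(s)\|\,\|\Delta_h u_h(t)\|$; the factor $h^{-1}$ cannot be absorbed by Young's inequality into $m_0\|\Delta_h u_h\|^2$ (it would cost $h^{-2}\|\nabla u_h\|^2$), and the first a priori bound is $h$-independent, so it cannot compensate either. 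Hence the step ``tracked using standard trace and inverse estimates, and then absorbed'' does not close. The device used in this paper for the identical difficulty (proofs of \eqref{cap3} and of the fully discrete gradient bound \eqref{grad bound}) avoids element-wise integration by parts altogether: introduce the operator $\Delta_h^{b_2}$ of \eqref{discrete laplace-1} so the $b_2$ contribution reads $(-\Delta_h^{b_2}u_h(s),v_h)$, integrate the $b_1$ part by parts globally (legitimate since $b_1u_h(s)\in H^1(\Omega)$ and $v_h\in H^1_0(\Omega)$), and then invoke a bound of the type \eqref{16-10-22-5} for $\Delta_h^{b_2}u_h(s)$ tested against $v_h=-\Delta_h u_h$, so the memory term is controlled by $\|\nabla u_h(s)\|\,\|\Delta_h u_h\|$; with this replacement your fractional Gr\"onwall argument closes and yields \eqref{AMP1as-1}. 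Short of that, you would need a genuine discrete $H^2$/jump-seminorm equivalence to control the face terms by $\|\Delta_h u_h(s)\|$, which you neither state nor prove.
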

 \noindent For the semi discrete error analysis of  the equation \eqref{16-9-22-1}, we modify the standard Ritz-Volterra projection operator \cite{cannon1988non} in such a way that it simplifies the complexities arising from the nonlocal nonlinear diffusion coefficient.
 The modified Ritz-Volterra projection operator $(M_{h}u):[0,T]\rightarrow X_{h}$  satisfies the following equation for all $v_{h}$ in $X_{h}$ and $a.e. ~t$ in $[0,T]$
\begin{equation}\label{2.5}
M\left(\|\nabla u\|^{2}\right)\left(\nabla\left (u-M_{h}u\right),\nabla v_{h}\right)=\int_{0}^{t}B\left(t,s,u(s)-M_{h}u(s),v_{h}\right)~ds,
\end{equation} 
where $u$ is the solution of  the equation \eqref{16-9-22-1}.
\begin{thm}\label{well defined of modified projection operator} The modified Ritz-Volterra projection operator \eqref{2.5} is well defined.
\end{thm}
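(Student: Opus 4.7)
The plan is to interpret \eqref{2.5} as a linear Volterra integral equation of the second kind for the unknown $X_h$-valued function $w_h(t) := M_h u(t)$, and then invoke the classical existence-and-uniqueness theory. First, I would separate the unknown from the data in \eqref{2.5} to obtain, for each $t \in [0,T]$ and every $v_h \in X_h$,
\begin{equation*}
M(\|\nabla u(t)\|^2)(\nabla w_h(t), \nabla v_h) - \int_0^t B(t,s, w_h(s), v_h) \, ds = F(t; v_h),
\end{equation*}
where the right-hand side
\begin{equation*}
F(t; v_h) := M(\|\nabla u(t)\|^2)(\nabla u(t), \nabla v_h) - \int_0^t B(t,s, u(s), v_h) \, ds
\end{equation*}
depends only on the (known) exact solution $u$ from Theorem \ref{a priori bounds }.

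Next, fixing a basis $\{\phi_j\}_{j=1}^{N_h}$ of the finite-dimensional space $X_h$ and writing $w_h(t) = \sum_j c_j(t)\phi_j$, the weak equation collapses to the $\mathbb{R}^{N_h}$-valued Volterra system
\begin{equation*}
A(t)\, c(t) = g(t) + \int_0^t K(t,s)\, c(s) \, ds,
\end{equation*}
with $A(t)_{ij} = M(\|\nabla u(t)\|^2)(\nabla \phi_j, \nabla \phi_i)$, $K(t,s)_{ij} = B(t,s,\phi_j,\phi_i)$ and $g_i(t) = F(t;\phi_i)$. Assumption (A2), together with the symmetric positive definiteness of the finite element stiffness matrix on $X_h$, shows that $A(t)$ is uniformly invertible on $[0,T]$ with $\|A(t)^{-1}\|$ bounded. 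Multiplying through by $A(t)^{-1}$ puts the system in the standard normal form
\begin{equation*}
c(t) = A(t)^{-1} g(t) + \int_0^t A(t)^{-1} K(t,s)\, c(s) \, ds,
\end{equation*}
whose unique bounded (indeed continuous) solution on $[0,T]$ is produced by a Banach fixed-point argument applied on $C([0,T];\mathbb{R}^{N_h})$ equipped with the weighted sup-norm $\|c\|_\beta := \sup_{t\in[0,T]} e^{-\beta t}|c(t)|$ for sufficiently large $\beta$, which turns the Volterra operator into a strict contraction.

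The main technical point, which I expect to be minor rather than a genuine obstacle, is to verify that the data are regular enough for the kernel and forcing to be bounded. From Theorem \ref{a priori bounds } we have $u \in L^{\infty}(0,T; H^{1}_{0}(\Omega))$, so the Lipschitz continuity of $M$ combined with (A2) yields a uniform two-sided bound on $M(\|\nabla u(t)\|^2)$, while \eqref{2.3} bounds the bilinear form $B$. These estimates give $g \in L^{\infty}(0,T; \mathbb{R}^{N_h})$ and $K \in L^{\infty}([0,T]^2; \mathbb{R}^{N_h \times N_h})$, which is amply sufficient for the Volterra theory to produce a unique $w_h \in L^{\infty}(0,T; X_h)$. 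This uniquely determined $w_h$ is then the desired $M_h u$, and the operator \eqref{2.5} is well defined.
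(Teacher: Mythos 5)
Your proposal follows essentially the same route as the paper: both reduce \eqref{2.5} via a basis of $X_h$ to a finite-dimensional linear Volterra system of the second kind, use (A2) to get (uniform) invertibility of the coefficient matrix $A(t)$, and then conclude existence and uniqueness from standard Volterra theory (the paper cites this step, you carry out the weighted-norm contraction argument yourself). The proposal is correct; the only addition the paper makes is the observation that $(M_h u)(0)=R_h u_0$.
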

\begin{proof}
The equation \eqref{2.5} is actually a system of integral equations of Volterra type. We are finding $M_{h}u$ in a finite dimensional subspace $X_{h}$, say dimension of $X_{h}$ is $\bar{N}$. Let $\{\psi_{i}\}_{i=1}^{\bar{N}}$ be a basis for $X_{h}$  then by the identification
\begin{equation*}
   (M_{h}u)(x,t)=\sum_{i=1}^{\bar{N}}\beta_{i}(t)\psi_{i}(x),
\end{equation*} 
using which the equation \eqref{2.5} may be  rewritten as 
\begin{equation}\label{S1.8}
A(t)\beta(t)-\int_{0}^{t}B(t,s)\beta(s)~ds=F(t)
\end{equation}
where $A(t)$ and $B(t,s)$ are the matrices and $\beta(t)$ and $F(t)$ are the vectors given  by 
\begin{equation*}
\begin{aligned}
\left[A(t)\right]_{ij}&=M\left(\|\nabla u\|^{2}\right)\left(\nabla \psi_{i},\nabla \psi_{j}\right)~~\text{for}~1\leq i,j \leq \bar{N},\\
\left[B(t,s)\right]_{ij}&=B(t,s,\psi_{i}(s),\psi_{j})~~\text{for}~1\leq i,j \leq \bar{N},\\
\beta(t)&=\left(\beta_{1}(t),\beta_{2}(t),\beta_{3}(t),\dots,\beta_{{\bar{N}}}(t)\right)',\\
F(t)&=\left(F_{1}(t),F_{2}(t),F_{3}(t),\dots,F_{\bar{N}}(t)\right)',
\end{aligned}
\end{equation*}
with 
\begin{equation*}
F_{j}(t)=M\left(\|\nabla u\|^{2}\right)\left(\nabla u,\nabla \psi_{j}\right)-\int_{0}^{t}B(t,\tau,u(\tau),\psi_{j})~d\tau~~\text{for}~1\leq j \leq \bar{N}.
\end{equation*}
The matrix $A(t)$ is positive definite by (A2) which shows that the system \eqref{S1.8} possesses a unique solution $\beta(t)$ \cite[Theorem 2.1.2]{brunner2004collocation}. Consequently, $M_{h}u$ in the equation \eqref{2.5} is well defined. Further $(M_{h}u)(0)=R_{h}u_{0}$, where $R_{h}u_{0}$ is the Ritz projection of $u_{0}$ into $X_{h}$ defined by \eqref{A2}.
\end{proof}
\begin{lem}\cite{kumar2020finite} The modified Ritz-Volterra projection operator defined by \eqref{2.5} satisfies the following stability results
\begin{equation}\label{16-10-22-7}
\|\nabla M_{h}u\|\lesssim K~\text{and}~\|\Delta_{h}M_{h}u\|\lesssim K, ~\text{for all}~t~\in ~[0,T], 
\end{equation}
where $\Delta_{h} : X_{h}\rightarrow X_{h}$ is the discrete Laplacian operator defined by  
	\begin{equation}\label{discrete laplace}
	    (-\Delta_{h}u_{h},v_{h}):=(\nabla u_{h},\nabla v_{h})~~\forall~~ u_{h},v_{h} \in X_{h}.
	\end{equation} 
\end{lem}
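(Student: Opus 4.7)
The plan is to derive each stability estimate by testing the defining variational equation \eqref{2.5} against a suitable discrete function and closing with a Gronwall-type argument, using the a priori bounds on $u$ already established for Part 1, then bootstrapping with Part 1 in order to handle Part 2.

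\textbf{Part 1 — Bounding $\|\nabla M_{h}u\|$.} I would substitute $v_{h}=M_{h}u$ into \eqref{2.5}, which after rearrangement gives
\begin{equation*}
M(\|\nabla u\|^{2})\|\nabla M_{h}u\|^{2}=M(\|\nabla u\|^{2})(\nabla u,\nabla M_{h}u)-\int_{0}^{t}B\bigl(t,s,u(s)-M_{h}u(s),M_{h}u\bigr)\,ds.
\end{equation*}
The lower bound $M(\|\nabla u\|^{2})\ge m_{0}$ from (A2), together with an upper bound on $M(\|\nabla u\|^{2})$ coming from its Lipschitz continuity and the a priori bound $\|\nabla u\|\lesssim K$ from Theorem \ref{a priori bounds }, Cauchy-Schwarz on the cross term, and the continuity \eqref{2.3} of $B$, after division by $\|\nabla M_{h}u\|$ produce an integral inequality of the form
\begin{equation*}
\|\nabla M_{h}u(t)\|\lesssim K+\int_{0}^{t}\|\nabla M_{h}u(s)\|\,ds.
\end{equation*}
The standard Gronwall inequality then delivers the first estimate.

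\textbf{Part 2 — Bounding $\|\Delta_{h}M_{h}u\|$.} I would test \eqref{2.5} with $v_{h}=-\Delta_{h}M_{h}u\in X_{h}$. The definition \eqref{discrete laplace} of $\Delta_{h}$ yields $(\nabla M_{h}u,\nabla v_{h})=\|\Delta_{h}M_{h}u\|^{2}$, and integration by parts on the $H^{2}\cap H^{1}_{0}$-regular solution $u$ (whose $H^{2}$-regularity is provided by \eqref{18-12-22-2}) gives $(\nabla u,\nabla v_{h})=(\Delta u,\Delta_{h}M_{h}u)$. Rearranging,
\begin{equation*}
M(\|\nabla u\|^{2})\|\Delta_{h}M_{h}u\|^{2}=M(\|\nabla u\|^{2})(\Delta u,\Delta_{h}M_{h}u)-\int_{0}^{t}B\bigl(t,s,u(s)-M_{h}u(s),-\Delta_{h}M_{h}u\bigr)\,ds.
\end{equation*}
The first term on the right is immediately controlled by $\|u\|_{2}\|\Delta_{h}M_{h}u\|\lesssim C\|\Delta_{h}M_{h}u\|$. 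For the integral, I split the integrand as $B(t,s,u,-\Delta_{h}M_{h}u)-B(t,s,M_{h}u,-\Delta_{h}M_{h}u)$. On the $u$-piece I integrate by parts twice (the smoothness of $b_{0},b_{1},b_{2}$ and $u\in H^{2}$ makes this legitimate) to recover $-(b(\cdot,t,s)u,\Delta_{h}M_{h}u)$, which is bounded by $\|u\|_{2}\|\Delta_{h}M_{h}u\|$. The $M_{h}u$-piece is handled by combining the identity $\|\nabla w_{h}\|^{2}=(-\Delta_{h}w_{h},w_{h})$ on $X_{h}$ with the bound $\|\nabla M_{h}u\|\lesssim K$ from Part 1, so as to produce a bound of the form $\|\nabla M_{h}u(s)\|\,\|\Delta_{h}M_{h}u\|$. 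Collecting, using Young's inequality to absorb a factor of $\|\Delta_{h}M_{h}u\|^{2}$ into the LHS, and applying Gronwall closes the estimate.

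\textbf{Main obstacle.} The delicate step is Part 2, specifically the treatment of $B(t,s,M_{h}u,-\Delta_{h}M_{h}u)$. A naive application of \eqref{2.3} would require a bound on $\|\nabla(-\Delta_{h}M_{h}u)\|$, which has no $h$-independent control. Circumventing this forces a careful splitting of $u-M_{h}u$: integration by parts can move derivatives off the test function only on the $H^{2}$-regular piece $u$, while the $X_{h}$-piece $M_{h}u$ must be controlled through the Part~1 bound combined with the discrete Laplacian identity. Making the two pieces fit so that Gronwall can close is the technical heart of the argument.
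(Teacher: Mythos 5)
Your Part 1 is sound: testing \eqref{2.5} with $v_{h}=M_{h}u$, using (A2), \eqref{2.3}, the bound $\|\nabla u\|\lesssim K$ from \eqref{AMP1as}, and Gr\"{o}nwall's inequality is a correct route to $\|\nabla M_{h}u\|\lesssim K$ (note the paper itself only cites this lemma from the reference \cite{kumar2020finite}, so there is no in-paper proof to compare against). Part 2, however, has a genuine gap at exactly the step you call the technical heart. For the piece $B(t,s,M_{h}u(s),-\Delta_{h}M_{h}u(t))$ the leading term is $\left(b_{2}(x,t,s)\nabla M_{h}u(s),\nabla(-\Delta_{h}M_{h}u(t))\right)$, and the bound you claim, namely $\lesssim \|\nabla M_{h}u(s)\|\,\|\Delta_{h}M_{h}u(t)\|$, does not follow from the identity $\|\nabla w_{h}\|^{2}=(-\Delta_{h}w_{h},w_{h})$ and is false in general. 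Even in the constant-coefficient case $b_{2}=I$, the definition \eqref{discrete laplace} gives $\left(\nabla M_{h}u(s),\nabla(-\Delta_{h}M_{h}u(t))\right)=\left(-\Delta_{h}M_{h}u(s),-\Delta_{h}M_{h}u(t)\right)$, whose natural bound is $\|\Delta_{h}M_{h}u(s)\|\,\|\Delta_{h}M_{h}u(t)\|$; replacing $\|\Delta_{h}M_{h}u(s)\|$ by $\|\nabla M_{h}u(s)\|$ costs a factor $h^{-1}$ (inverse estimate), and integrating by parts on $M_{h}u(s)$ is not legitimate because it is only piecewise polynomial, so interelement jump terms appear. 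Hence Part 1 cannot be invoked where you invoke it, and your Gr\"{o}nwall argument does not close as written.

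Two repairs are possible. (i) Keep the energy argument but run Gr\"{o}nwall in $\|\Delta_{h}M_{h}u\|$ itself: for $b_{2}=I$ the term $\|\Delta_{h}M_{h}u(s)\|\,\|\Delta_{h}M_{h}u(t)\|$ under the time integral is harmless; for variable $b_{2}$ you must introduce the operator $\Delta_{h}^{b_{2}}$ of \eqref{discrete laplace-1} and additionally justify a comparison of $\|\Delta_{h}^{b_{2}}v_{h}\|$ with $\|\Delta_{h}v_{h}\|+\|\nabla v_{h}\|$, an ingredient you have not supplied and which is not a one-line fact. (ii) A simpler route avoids testing with $-\Delta_{h}M_{h}u$ altogether: write $M_{h}u=R_{h}u+(M_{h}u-R_{h}u)$. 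From \eqref{A2} and \eqref{A2a} one has $\Delta_{h}R_{h}u=P_{h}\Delta u$, hence $\|\Delta_{h}R_{h}u\|\leq \|\Delta u\|\lesssim \|u\|_{2}$; for the discrete part use the inverse inequality $\|\Delta_{h}(M_{h}u-R_{h}u)\|\lesssim h^{-1}\|\nabla (M_{h}u-R_{h}u)\|$ together with the Gr\"{o}nwall estimate $\|\nabla (R_{h}u-M_{h}u)\|\lesssim h\int_{0}^{t}\|u(s)\|_{2}\,ds$, which is precisely \eqref{A2.61} in the paper and is proved by the same kind of argument as your Part 1. This yields $\|\Delta_{h}M_{h}u\|\lesssim \|u\|_{2}+\int_{0}^{t}\|u(s)\|_{2}\,ds$ with no problematic term, and the quasi-uniformity of $\mathbb{T}_{h}$ assumed in the paper makes the inverse inequality available.
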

\noindent Now, we prove the best approximation properties of the modified Ritz-Volterra projection operator using the approximation properties of  the Ritz projection operator.  The following results on the approximation properties of the Ritz projection for any function $u$ in $H^{2}(\Omega)\cap H^{1}_{0}(\Omega)$ are well known  \cite{thomee2007galerkin} 
\begin{align}
    \|R_{h}u-u\|+h\|\nabla\left(R_{h}u-u\right)\|&\lesssim h^{2}\|u\|_{2}.\label{L1}
    \end{align}
    \begin{thm}\label{best approximation} Suppose that the solution $u$ of the equation \eqref{16-9-22-1}  satisfies regularity assumption \eqref{14-9-22-1}. Then 
the modified Ritz-Volterra projection operator $M_{h}u$ of $u$ has  the following best approximation property 
\begin{align}
    \|u-M_{h}u\|+h\|\nabla(u-M_{h}u)\|\lesssim~h^{2}.\label{A10}
    \end{align}
\end{thm}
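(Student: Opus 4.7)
The plan is to use the classical Ritz--Volterra splitting followed by an Aubin--Nitsche duality argument. Set $\rho := u - M_{h}u$ and decompose $\rho = \eta + \theta$, where $\eta := u - R_{h}u$ and $\theta := R_{h}u - M_{h}u \in X_{h}$. The approximation property \eqref{L1} together with the regularity estimate \eqref{14-9-22-1} immediately gives $\|\eta\| + h\|\nabla\eta\| \lesssim h^{2}$, so it remains to show $\|\theta\| + h\|\nabla\theta\| \lesssim h^{2}$.

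First I would derive the error equation for $\theta$. Substituting $\rho = \eta + \theta$ into \eqref{2.5} and using the Ritz-projection property $(\nabla\eta,\nabla v_{h}) = 0$ for all $v_{h}\in X_{h}$, we obtain
\begin{equation*}
M(\|\nabla u\|^{2})(\nabla\theta,\nabla v_{h}) = \int_{0}^{t} B\bigl(t,s,\eta(s)+\theta(s),v_{h}\bigr)\,ds \qquad \forall\, v_{h}\in X_{h}.
\end{equation*}
Testing with $v_{h}=\theta$, invoking (A2) to get $M(\|\nabla u\|^{2})\geq m_{0}$, applying the bound \eqref{2.3}, and cancelling one factor of $\|\nabla\theta\|$ leads to $\|\nabla\theta(t)\| \lesssim h + \int_{0}^{t}\|\nabla\theta(s)\|\,ds$, so that Gr\"{o}nwall's inequality yields $\|\nabla\theta(t)\| \lesssim h$, and hence $h\|\nabla(u-M_{h}u)\| \lesssim h^{2}$.

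For the $L^{2}$ bound on $\theta$, I would exploit the fact that $M(\|\nabla u(t)\|^{2})$ is, at fixed $t$, a positive scalar. Let $\Psi\in H^{2}(\Omega)\cap H^{1}_{0}(\Omega)$ solve the auxiliary Poisson problem
\begin{equation*}
-M(\|\nabla u\|^{2})\,\Delta\Psi = \theta \ \text{ in }\Omega, \qquad \Psi = 0 \text{ on } \partial\Omega,
\end{equation*}
so that elliptic regularity gives $\|\Psi\|_{2}\lesssim \|\theta\|$. Then
\begin{equation*}
\|\theta\|^{2} = M(\|\nabla u\|^{2})(\nabla\theta,\nabla\Psi) = M(\|\nabla u\|^{2})(\nabla\theta,\nabla R_{h}\Psi),
\end{equation*}
where the last equality uses $\theta\in X_{h}$ and the Ritz property for $\Psi$. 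Plugging $v_{h}=R_{h}\Psi$ into the error equation and writing $R_{h}\Psi = \Psi - (\Psi-R_{h}\Psi)$ gives
\begin{equation*}
\|\theta\|^{2} = \int_{0}^{t} B\bigl(t,s,\rho(s),\Psi\bigr)\,ds - \int_{0}^{t} B\bigl(t,s,\rho(s),\Psi-R_{h}\Psi\bigr)\,ds.
\end{equation*}
For the first integral I would integrate by parts on each of the three terms of $B$, moving all derivatives from $\rho(s)\in H^{1}_{0}(\Omega)$ onto $\Psi$; using smoothness of the coefficients $b_{2},b_{1},b_{0}$ this yields the dual-type bound $|B(t,s,\rho,\Psi)|\lesssim \|\rho\|\,\|\Psi\|_{2}$. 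For the second integral I would apply \eqref{2.3} together with \eqref{L1} to get $|B(t,s,\rho(s),\Psi-R_{h}\Psi)|\lesssim h\|\nabla\rho(s)\|\,\|\Psi\|_{2}$. Substituting $\|\Psi\|_{2}\lesssim \|\theta\|$, cancelling, and inserting the already established bounds $\|\nabla\rho(s)\|\lesssim h$ and $\|\eta(s)\|\lesssim h^{2}$ produces $\|\theta(t)\|\lesssim h^{2} + \int_{0}^{t}\|\theta(s)\|\,ds$, and a second application of Gr\"{o}nwall finishes the proof.

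The main technical obstacle is the dual-type estimate $|B(t,s,w,v)|\lesssim \|w\|\,\|v\|_{2}$: reversing the asymmetry in \eqref{2.3} requires the integration-by-parts step and therefore genuine use of the smoothness assumed on $b_{2},b_{1},b_{0}$. Without it the duality argument would only recover $\|\theta\|\lesssim h$, which is one order short of what the theorem claims.
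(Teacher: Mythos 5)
Your proposal is correct and follows essentially the same route as the paper: the same splitting via the Ritz projection, the same Gr\"{o}nwall argument for the gradient bound, and the same Aubin--Nitsche duality with the nonlocal elliptic problem, whose key ingredient is exactly the paper's integration-by-parts estimate $|B(t,s,w,v)|\lesssim \|w\|\,\|v\|_{2}$. The only cosmetic difference is that you apply duality to $\theta$ with data $\theta$ itself, whereas the paper estimates $\|\rho\|$ directly by testing against an arbitrary $g\in L^{2}(\Omega)$; both lead to the same Gr\"{o}nwall inequality and the bound \eqref{A10}.
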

\begin{proof}
Denote $\rho:= u-M_{h}u$. Using Ritz projection operator \eqref{A2}, we may decompose $\rho$ as 
\begin{equation}\label{6-5-22}
    \rho=\phi+\psi, ~\text{where}~\phi=u-R_{h}u~\text{and}~\psi=R_{h}u-M_{h}u.
\end{equation}
\textbf{(Estimate on $\|\nabla \rho\|$):}
Putting $\rho=\phi+\psi$ and $v_{h}=\psi(t)$ in the definition  of modified Ritz-Volterra projection operator \eqref{2.5},  we get 
\begin{equation*}\label{A2.51}
M\left(\|\nabla u\|^{2}\right)\left(\nabla \phi,\nabla \psi\right)+M\left(\|\nabla u\|^{2}\right)\left(\nabla \psi,\nabla \psi\right)=\int_{0}^{t}B\left(t,s,\phi(s)+\psi(s),\psi(t)\right)~ds.
\end{equation*} 
 Use the   definition  of Ritz projection \eqref{A2}, positivity of diffusion coefficient (A2), and  continuity of the function $B\left(t,s,\phi(s)+\psi(s),\psi(t)\right)$ \eqref{2.3} to obtain 
\begin{equation}\label{A2.6}
m_{0}\|\nabla \psi(t)\|^{2}\leq \|\nabla \psi(t)\|\int_{0}^{t}B_{0}\|\nabla \phi(s)+\nabla \psi(s)\|ds.
\end{equation} 
Cancellation of  $\|\nabla \psi(t)\|$  from  both sides of \eqref{A2.6} and triangle inequality leads to
\begin{equation*}\label{1A2.6}
\|\nabla \psi(t)\|\lesssim \int_{0}^{t}\|\nabla \phi(s)\|~ds+\int_{0}^{t}\|\nabla \psi(s)\|~ds.
\end{equation*} 
Apply the Gr\"{o}nwall's inequality and the approximation property of Ritz projection  \eqref{L1} to conclude
\begin{equation}\label{A2.61}
\|\nabla \psi(t)\|\lesssim \int_{0}^{t}h\|u(s)\|_{2}~ds~~~a.e.~t \in (0,T).
\end{equation} 
Thus, approximation property of Ritz projection  \eqref{L1} and estimate \eqref{A2.61} imply 
\begin{equation*}\label{L2a}
\begin{aligned}
    \|\nabla \rho(t)\| \leq \|\nabla \phi(t)\|+\|\nabla \psi(t)\|\lesssim h\left(\|u(t)\|_{2}+\int_{0}^{t}\|u(s)\|_{2}~ds\right)~~~\text{a.e.}~t \in (0,T).
    \end{aligned}
\end{equation*}
Finally, regularity estimate \eqref{14-9-22-1} yields
\begin{equation}\label{L2}
\begin{aligned}
    \|\nabla \rho(t)\| \lesssim h ~~~a.e.~t \in (0,T).
    \end{aligned}
\end{equation}
\textbf{(Estimate on $\|\rho\|$):} We estimate $\|\rho\|$ using duality argument. Let $g\in L^{2}(\Omega)$ and $\tilde{g} \in H^{2}(\Omega)$ be the solution of the following nonlocal  elliptic problem for each $t \in [0,T]$
\begin{equation}\label{A3}
    \begin{aligned}
    -M\left(\|\nabla u\|^{2}\right)\Delta \tilde{g}&=g \quad \text{in}~\Omega,\\
    \tilde{g}&=0\quad \text{on}~\partial \Omega.
    \end{aligned}
\end{equation}
The problem \eqref{A3} is well-posed by Lax-Milgram Lemma and by standard elliptic regularity result, we have 
\begin{equation}\label{L3}
    \|\tilde{g}\|_{2}\lesssim \|g\|.
\end{equation}
Using \eqref{A3} and integration by parts over $\Omega$ one obtain
\begin{equation}\label{A4a}
\begin{aligned}
    (\rho,g) =M\left(\|\nabla u\|^{2}\right)(\nabla \rho,\nabla \tilde{g}).
    \end{aligned}
\end{equation}
Adding and subtracting $M\left(\|\nabla u\|^{2}\right)(\nabla \rho,\nabla R_{h}\tilde{g} )$ in R.H.S. of \eqref{A4a} leads to
\begin{equation*}\label{A4b}
\begin{aligned}
    (\rho,g) =M\left(\|\nabla u\|^{2}\right)(\nabla \rho,\nabla \tilde{g}-\nabla R_{h}\tilde{g} )+M\left(\|\nabla u\|^{2}\right)(\nabla \rho,\nabla R_{h}\tilde{g}).
    \end{aligned}
\end{equation*}
The definition of modified Ritz-Volterra projection operator \eqref{2.5} yields
\begin{equation}\label{A4}
\begin{aligned}
    (\rho,g) =M\left(\|\nabla u\|^{2}\right)(\nabla \rho,\nabla \tilde{g}-\nabla R_{h}\tilde{g} )+\int_{0}^{t}B\left(t,s,\rho(s),R_{h}\tilde{g}\right)~ds.
    \end{aligned}
\end{equation}
\noindent The equation \eqref{A4} may be  rewritten as 
\begin{equation}\label{A43}
\begin{aligned}
    (\rho,g)&=M\left(\|\nabla u\|^{2}\right)(\nabla \rho,\nabla \tilde{g}-\nabla R_{h}\tilde{g} )+\int_{0}^{t}B\left(t,s,\rho(s),R_{h}\tilde{g}-\tilde{g}\right)ds\\
    &+\int_{0}^{t}B\left(t,s,\rho(s),\tilde{g}\right)ds.
    \end{aligned}
\end{equation}
Consider the  integrand from the last term of R.H.S. of \eqref{A43} 
\begin{equation}\label{A43-1}
    B\left(t,s,\rho(s),\tilde{g}\right)= (b_{2}(x,t,s)\nabla \rho(s),\nabla \tilde{g})-(b_{1}(x,t,s)\rho(s),\nabla \tilde{g})+(b_{0}(x,t,s)\rho(s),\tilde{g}).
\end{equation}
Integration by parts in the first term of R.H.S. of \eqref{A43-1} yields 
\begin{equation}\label{A43-2}
    B\left(t,s,\rho(s),\tilde{g}\right)= -( \rho(s),\nabla\cdot(b_{2}^{'}(x,t,s)\nabla \tilde{g}))-(b_{1}(x,t,s)\rho(s),\nabla \tilde{g})+(b_{0}(x,t,s)\rho(s),\tilde{g}).
\end{equation}
Since $b_{2}$ is a symmetric matrix so we have
\begin{equation*}\label{A43-3}
    B\left(t,s,\rho(s),\tilde{g}\right)= -( \rho(s),\nabla\cdot(b_{2}(x,t,s)\nabla \tilde{g}))-(b_{1}(x,t,s)\rho(s),\nabla \tilde{g})+(b_{0}(x,t,s)\rho(s),\tilde{g}).
\end{equation*}
This implies
\begin{equation}\label{16-10-22-1}
    |B\left(t,s,\rho(s),\tilde{g}\right)|\lesssim  \|\rho(s)\|~\|\Delta \tilde{g}\|+\|\rho(s)\|~\| \nabla \tilde{g}\|+\|\rho(s)\|~\|\tilde{g}\|\lesssim \|\rho(s)\|~\|\tilde{g}\|_{2}.
\end{equation}
Apply  Cauchy-Schwarz inequality in \eqref{A43} and estimate \eqref{16-10-22-1} to get 
\begin{equation*}\label{A6}
    |(\rho,g)|\lesssim \|\nabla \rho\|\|\nabla \tilde{g}-\nabla R_{h}\tilde{g}\| +\|\nabla R_{h}\tilde{g}-\nabla \tilde{g}\|\int_{0}^{t}\|\nabla \rho(s)\|ds\\
    +\|\tilde{g}\|_{2}\int_{0}^{t}\|\rho(s)\|~ds.
\end{equation*}
In the view of estimates \eqref{L1}, \eqref{L2}, and \eqref{L3}, we  conclude
 \begin{equation*}\label{A61}
 \begin{aligned}
    |(\rho,g)| &\lesssim \left(h^{2} +\int_{0}^{t}\|\rho(s)\|~ds\right)\|\tilde{g}\|_{2} \lesssim \left(h^{2} +\int_{0}^{t}\| \rho(s)\|~ds\right)\|g\|
    \end{aligned}
\end{equation*}
Recall the definition of $L^{2}$ norm 
\begin{equation*}
\begin{aligned}
    \|\rho\|:=\sup_{g \in L^{2}(\Omega)}\frac{|(\rho,g)|}{\|g\|}&\lesssim h^{2}+\int_{0}^{t}\|\rho(s)\|~ds.
     \end{aligned}
\end{equation*}
 Gr\"{o}nwall's inequality and regularity estimate \eqref{14-9-22-1} imply
\begin{equation}\label{A9}
   \|\rho\| \lesssim h^{2}.
\end{equation}
Finally, combining the estimates \eqref{L2} and \eqref{A9}, we deduce the desired estimate \eqref{A10}.
\end{proof}
\begin{thm}\label{mn} Let the solution $u$ of the equation \eqref{16-9-22-1} satisfies the regularity \eqref{14-9-22-1}-\eqref{14-9-22-2}. Assume that the  derivative of nonlocal diffusion coefficient $M$ is bounded. Then we have the following best approximation property
\begin{equation}
     \|~^{C}D_{t}^{\alpha}(u-M_{h}u)\|+h\|\nabla \left(~^{C}D^{\alpha}_{t}(u-M_{h}u)\right)\|\lesssim ~h^{2}\label{B10}.
\end{equation}
\end{thm}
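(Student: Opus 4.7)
The plan is to bound $\|\partial_t\rho\|$ and $\|\nabla\partial_t\rho\|$ pointwise in $t$ by reproducing the argument of Theorem \ref{best approximation} at the level of the first classical time derivative, and then recover the Caputo-derivative estimate through a beta-function integral. Write $\rho:=u-M_hu=\phi+\psi$ with $\phi=u-R_hu$ and $\psi=R_hu-M_hu$. Since $R_h$ is a time-independent linear spatial projection, it commutes with $\partial_t$, so the Ritz orthogonality $(\nabla\partial_t\phi,\nabla v_h)=0$ holds for every $v_h\in X_h$; combining the regularity estimate \eqref{14-9-22-2} with \eqref{L1} gives the pointwise bound $\|\partial_t\phi(t)\|+h\|\nabla\partial_t\phi(t)\|\lesssim h^{2}(1+t^{\alpha-1})$. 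All of the real work is in proving the same bound for $\partial_t\psi$.

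Differentiating the defining equation \eqref{2.5} in $t$ and using the Ritz orthogonality to replace $\partial_t\rho$ by $\partial_t\psi$ on the left yields
\begin{equation*}
M(\|\nabla u\|^{2})(\nabla\partial_t\psi,\nabla v_h)=-2M'(\|\nabla u\|^{2})(\nabla u,\nabla\partial_t u)(\nabla\rho,\nabla v_h)+B(t,t,\rho(t),v_h)+\int_{0}^{t}\partial_t B(t,s,\rho(s),v_h)\,ds.
\end{equation*}
Testing with $v_h=\partial_t\psi(t)$ and invoking (A2), the boundedness of $M'$ assumed in the statement, the continuity \eqref{2.3} of $B$ (and the analogous bound for $\partial_t B$), the regularity \eqref{14-9-22-1}--\eqref{14-9-22-2}, and the gradient bound $\|\nabla\rho\|\lesssim h$ from \eqref{A10} gives $\|\nabla\partial_t\psi(t)\|\lesssim h(1+t^{\alpha-1})$. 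The $L^{2}$ bound is obtained by repeating the duality argument of Theorem \ref{best approximation}: with $g\in L^{2}(\Omega)$ and $\tilde g$ solving the dual problem \eqref{A3}, write $(\partial_t\psi,g)=M(\|\nabla u\|^{2})(\nabla\partial_t\psi,\nabla\tilde g)$, split $\tilde g=(\tilde g-R_h\tilde g)+R_h\tilde g$, substitute $v_h=R_h\tilde g$ in the displayed equation, and control each resulting term by $h^{2}(1+t^{\alpha-1})\|g\|$ using \eqref{L1}, \eqref{L3}, the $L^{2}$ bound $\|\rho\|\lesssim h^{2}$ from \eqref{A10}, and the integration-by-parts trick \eqref{A43-1}--\eqref{16-10-22-1}. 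This produces $\|\partial_t\psi(t)\|\lesssim h^{2}(1+t^{\alpha-1})$, hence $\|\partial_t\rho(t)\|+h\|\nabla\partial_t\rho(t)\|\lesssim h^{2}(1+t^{\alpha-1})$.

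Finally, the Caputo definition \eqref{LS1} gives
\begin{align*}
\|\,^{C}D_{t}^{\alpha}\rho(\cdot,t)\|+h\|\nabla\,^{C}D_{t}^{\alpha}\rho(\cdot,t)\| &\leq \frac{1}{\Gamma(1-\alpha)}\int_{0}^{t}(t-s)^{-\alpha}\bigl(\|\partial_s\rho(\cdot,s)\|+h\|\nabla\partial_s\rho(\cdot,s)\|\bigr)\,ds\\
&\lesssim h^{2}\int_{0}^{t}(t-s)^{-\alpha}(1+s^{\alpha-1})\,ds\lesssim h^{2},
\end{align*}
where the last integral equals $t^{1-\alpha}/(1-\alpha)+B(1-\alpha,\alpha)$ and is bounded uniformly on $[0,T]$. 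The hard part will be the duality step for $\|\partial_t\psi\|$: the $M'$-term and the $B(t,t,\rho(t),v_h)$-term from differentiating \eqref{2.5} both involve $\nabla\rho$ and so naively contribute only $O(h)$; recovering the full $O(h^{2})$ rate requires splitting $R_h\tilde g=\tilde g+(R_h\tilde g-\tilde g)$ inside those terms and exploiting the sharper estimate $\|\rho\|\lesssim h^{2}$ in tandem with integration by parts against $\tilde g$, exactly as in the proof of Theorem \ref{best approximation}.
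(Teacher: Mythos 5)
Your proposal is correct and follows essentially the same route as the paper: the splitting $\rho=\phi+\psi$, time-differentiation of the modified Ritz--Volterra relation \eqref{2.5} with bounded $M'$ and $\|\nabla\rho\|\lesssim h$ to get $\|\nabla\rho_t\|\lesssim h\,t^{\alpha-1}$, the duality argument with \eqref{A3}, the splitting $R_h\tilde g=\tilde g+(R_h\tilde g-\tilde g)$ together with the integration-by-parts bound \eqref{16-10-22-1} to reach $\|\rho_t\|\lesssim h^{2}t^{\alpha-1}$, and the Beta-function integral to pass to the Caputo derivative. The only differences (testing the differentiated equation directly with $\psi_t$ and running the duality for $\psi_t$ plus a separate Ritz bound for $\phi_t$, rather than the paper's bound on $(\nabla\rho_t,\nabla v_h)$ for generic $v_h$ and duality for $\rho_t$) are cosmetic.
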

\begin{proof}  Decompose $\rho$ \eqref{6-5-22} as in the proof of Theorem \ref{best approximation}.\\
\noindent \textbf{(Estimate on $\|\nabla \left(^{C}{D}_{t}^{\alpha}\rho\right)\|$):} First we estimate $\|\nabla \rho_{t}\|$ as 
\begin{equation}\label{imp}
    \| \nabla \rho_{t}\|^{2}=\left( \nabla \rho_{t}, \nabla \rho_{t}\right)=\left(\nabla \rho_{t}, \nabla \phi_{t}\right)+\left( \nabla \rho_{t},\nabla \psi_{t}\right) \leq \|\nabla \rho_{t}\|\|\nabla \phi_{t}\|+\left(\nabla \rho_{t},\nabla \psi_{t}\right).
\end{equation}
\noindent The approximation property of Ritz projection \eqref{L1} and the regularity assumptions \eqref{14-9-22-1}-\eqref{14-9-22-2} imply
\begin{equation}\label{first estimate }
\|\nabla \phi_{t}\| \leq h \|u_{t}\|_{2} \lesssim ht^{\alpha-1}.
\end{equation}
\noindent To estimate the $\left( \nabla \rho_{t},\nabla \psi_{t}\right)$, we  differentiate the modified Ritz-Volterra projection operator  \eqref{2.5} w.r.t $t$ as  
\begin{equation}\label{b1}
    \left(\frac{\partial}{\partial t}\left(M\left(\|\nabla u\|^{2}\right) \nabla \rho\right),\nabla v_{h}\right)= \frac{\partial}{\partial t}\int_{0}^{t}B(t,s,\rho(s),v_{h})~ds.
\end{equation}
Using Leibnitz rule of differentiation under integration in the R.H.S of the  equation \eqref{b1},  we get 
\begin{equation}\label{b21}
\begin{aligned}
     \left(M\left(\|\nabla u\|^{2}\right)\nabla \rho_{t},\nabla v_{h}\right)
    &=-\left(M'\left(\|\nabla u\|^{2}\right)2(\nabla u, \nabla u_{t})\nabla \rho,\nabla v_{h}\right)+B(t,t,\rho(t),v_{h})\\
    &+\int_{0}^{t}B_{t}(t,s,\rho(s),v_{h})~ds,
    \end{aligned}
\end{equation}
where
\begin{equation}
\begin{aligned}
    B_{t}(t,s,\rho(s),v_{h})&:= (b_{2t}(x,t,s)\nabla \rho(s),\nabla v_{h})+(\nabla\cdot(b_{1t}(x,t,s)\rho(s)),v_{h})\\
    &+(b_{0t}(x,t,s)\rho(s),v_{h}).
    \end{aligned}
\end{equation}
Apply (A2), continuity of memory operator \eqref{2.3}, and Cauchy-Schwarz inequality in \eqref{b21} to obtain
\begin{equation}\label{cq}
\begin{aligned}
    \left(\nabla \rho_{t},\nabla v_{h}\right)&\lesssim\|\nabla u\|\|\nabla u_{t}\|\|\nabla \rho\|\|\nabla v_{h}\|+\|\nabla \rho\|\|\nabla v_{h}\|+\|\nabla v_{h}\|\int_{0}^{t}\|\nabla \rho(s)\|~ds.
    \end{aligned}
\end{equation}
A priori bound on $\|\nabla u\|$ \eqref{AMP1as}, regularity estimate \eqref{14-9-22-2}  and estimate on $\|\nabla \rho\|$ \eqref{A10} yield 
\begin{equation}\label{p12}
    \left(\nabla \rho_{t},\nabla v_{h}\right)\lesssim h (t^{\alpha-1}+1)\|\nabla v_{h}\|\lesssim h t^{\alpha-1}\|\nabla v_{h}\|.
\end{equation}
As $\psi_{t} \in X_{h}$, thus we have
\begin{equation}\label{second estimate }
    \left( \nabla \rho_{t}, \nabla \psi_{t}\right) \lesssim ht^{\alpha-1}\|\nabla \psi_{t}\|.
\end{equation}
In the view of  estimates \eqref{first estimate } and \eqref{second estimate } in \eqref{imp}, we deduce 
\begin{equation*}
\begin{aligned}
    \|\nabla \rho_{t}\|^{2}&\lesssim ht^{\alpha-1}\| \nabla \rho_{t}\|+ht^{\alpha-1}\| \nabla \psi_{t}\|,\\
    &\lesssim ht^{\alpha-1} \|\nabla \rho_{t}\|+ht^{\alpha-1}\left( \|\nabla \rho_{t}\|+\| \nabla \phi_{t}\|\right).
    \end{aligned}
\end{equation*}
Apply estimate \eqref{first estimate } and Young's inequality to arrive at
\begin{equation}\label{time gradient approximation }
\|\nabla \rho_{t}\| \lesssim ht^{\alpha-1}.
\end{equation}
\noindent Further, consider 
\begin{equation*}
\begin{aligned}
    \|\nabla\left(~^{C}D^{\alpha}_{t}\rho\right)\|=\|\left(~^{C}D^{\alpha}_{t}\nabla \rho\right)\|&=\left\|\frac{1}{\Gamma(1-\alpha)}\int_{0}^{t}(t-s)^{-\alpha}\frac{\partial (\nabla \rho)}{\partial s}(s)~ds\right\|\\
    &\leq \frac{1}{\Gamma(1-\alpha)}\int_{0}^{t}(t-s)^{-\alpha}\|\nabla \frac{\partial  \rho}{\partial s}(s)\|ds.
    \end{aligned}
\end{equation*}
Using \eqref{time gradient approximation } and the definition of Beta function, we conclude
\begin{equation}\label{6-5-22-1}
\begin{aligned}
    \|\nabla\left(~^{C}D^{\alpha}_{t}\rho\right)\|& \leq \frac{h}{\Gamma(1-\alpha)}\int_{0}^{t}(t-s)^{-\alpha}s^{\alpha-1}~ds\\
    &\lesssim h.
    \end{aligned}
\end{equation}
\noindent \textbf{(Estimate on $\| \left(^{C}{D}_{t}^{\alpha}\rho\right)\|$):}
 We estimate $\|\rho_{t}\|$  using  duality argument as in the proof of estimate \eqref{A10}. Let $\tilde{g} \in H^{2}(\Omega)$ be the solution of the problem \eqref{A3} then 
\begin{equation}\label{L2 bound}
    (\rho_{t},g)=\left(M\left(\|\nabla u\|^{2}\right)\nabla \rho_{t},\nabla \tilde{g}-\nabla R_{h}\tilde{g}\right)+\left(M\left(\|\nabla u\|^{2}\right)\nabla \rho_{t},\nabla R_{h} \tilde{g}\right).
\end{equation}
Using \eqref{b21}, we have 
\begin{equation}\label{16-10-22-2}
    \begin{aligned}
        \left(M\left(\|\nabla u\|^{2}\right)\nabla \rho_{t},\nabla R_{h} \tilde{g}\right)&=-\left(M'\left(\|\nabla u\|^{2}\right)2(\nabla u, \nabla u_{t})\nabla \rho,\nabla R_{h} \tilde{g}-\nabla \tilde{g}\right)\\
        &-\left(M'\left(\|\nabla u\|^{2}\right)2(\nabla u, \nabla u_{t})\nabla \rho,\nabla \tilde{g}\right)+B(t,t,\rho(t),\tilde{g})\\
        &+B(t,t,\rho(t),R_{h} \tilde{g}-\tilde{g})+\int_{0}^{t}B_{t}(t,s,\rho(s),\tilde{g})~ds\\
        &+\int_{0}^{t}B_{t}(t,s,\rho(s),R_{h}\tilde{g}-\tilde{g})~ds.
    \end{aligned}
\end{equation}
Substitute \eqref{16-10-22-2} in \eqref{L2 bound} to obtain 
\begin{equation}\label{L2 bound-1}
\begin{aligned}
    (\rho_{t},g)&=\left(M\left(\|\nabla u\|^{2}\right)\nabla \rho_{t},\nabla \tilde{g}-\nabla R_{h}\tilde{g}\right)\\
    &-\left(M'\left(\|\nabla u\|^{2}\right)2(\nabla u, \nabla u_{t})\nabla \rho,\nabla R_{h} \tilde{g}-\nabla \tilde{g}\right)\\
        &-\left(M'\left(\|\nabla u\|^{2}\right)2(\nabla u, \nabla u_{t})\nabla \rho,\nabla \tilde{g}\right)+B(t,t,\rho(t),\tilde{g})\\
        &+B(t,t,\rho(t),R_{h} \tilde{g}-\tilde{g})+\int_{0}^{t}B_{t}(t,s,\rho(s),\tilde{g})~ds+\int_{0}^{t}B_{t}(t,s,\rho(s),R_{h}\tilde{g}-\tilde{g})~ds.
    \end{aligned}
\end{equation}
Cauchy-Schwarz inequality, a priori bound on $\|\nabla u\|$ \eqref{AMP1as}, and estimate \eqref{16-10-22-1} yield
\begin{equation}\label{L2 bound-2}
\begin{aligned}
    |(\rho_{t},g)|&\lesssim \|\nabla \rho_{t}\|~\|\nabla \tilde{g}-\nabla R_{h}\tilde{g}\|+\|\nabla u_{t}\|~\|\nabla \rho\|~\|\nabla R_{h} \tilde{g}-\nabla \tilde{g}\|+\| \nabla u_{t}\|~\| \rho\|~\|\Delta \tilde{g}\|\\
    &+\|\rho\|~\|\tilde{g}\|_{2}+\|\nabla \rho\|~\|\nabla(R_{h} \tilde{g}-\tilde{g})\|+\|\tilde{g}\|_{2}\int_{0}^{t}\|\rho(s)\|~ds\\
    &+\|\nabla R_{h}\tilde{g}-\nabla \tilde{g}\|\int_{0}^{t}\|\nabla \rho(s)\|~ds.
    \end{aligned}
\end{equation}
Apply estimates \eqref{time gradient approximation }, \eqref{L1}, \eqref{L3}, \eqref{14-9-22-2}, \eqref{A10} to reach at 
\begin{equation}
\begin{aligned}
    |(\rho_{t},g)|&\lesssim h^{2}t^{\alpha-1}\|g\|+h^{2}t^{\alpha-1}\|g\|+ h^{2}t^{\alpha-1}\|g\|+h^{2}\|g\|+h^{2}\|g\|+\|g\|h^{2}+h^{2}\|g\|.
    \end{aligned}
\end{equation}
This implies 
\begin{equation}\label{cap}
    \|\rho_{t}\| := \sup_{g \in L^{2}(\Omega)}\frac{|(\rho_{t},g)|}{\|g\|}\lesssim h^{2}t^{\alpha-1}.
\end{equation}
 \noindent Finally, consider 
\begin{equation*}
\begin{aligned}
    \|~^{C}D^{\alpha}_{t}\rho\|=\left\|\frac{1}{\Gamma(1-\alpha)}\int_{0}^{t}(t-s)^{-\alpha}\frac{\partial \rho}{\partial s}ds\right\|&\leq \frac{1}{\Gamma(1-\alpha)}\int_{0}^{t}(t-s)^{-\alpha}\|\frac{\partial \rho}{\partial s}(s)\|ds.
    \end{aligned}
\end{equation*}
Using \eqref{cap} and the definition of Beta function, we get 
\begin{equation}\label{16-10-22-3}
    \|~^{C}D^{\alpha}_{t}\rho\|\lesssim h^{2} \frac{1}{\Gamma(1-\alpha)}\int_{0}^{t}(t-s)^{-\alpha} s^{\alpha-1}ds\lesssim h^{2}. 
    \end{equation}
We combine the estimates \eqref{6-5-22-1} and \eqref{16-10-22-3} to conclude the result \eqref{B10}.
\end{proof}
\begin{thm} Suppose that the solution $u$ of the equation \eqref{16-9-22-1} and the solution $u_{h}$ of the equation \eqref{semidiscrete} satisfy the regularity estimate \eqref{14-9-22-1} and \eqref{14-9-22-2}. Then we have the following semi discrete error estimates 
\begin{equation}\label{cap2}
    \|u-u_{h}\|_{L^{\infty}\left(0,T;L^{2}(\Omega)\right)}+\|u-u_{h}\|_{L^{2}_{\alpha}(0,T;H^{1}_{0}(\Omega))} \lesssim h,
\end{equation}
and
\begin{equation}\label{cap3}
    \|u-u_{h}\|_{L^{\infty}\left(0,T;H^{1}_{0}(\Omega)\right)} \lesssim h.
\end{equation}
\end{thm}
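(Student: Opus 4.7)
The plan is to use the standard Galerkin splitting $u - u_{h} = \rho + \theta$ where $\rho := u - M_{h}u$ and $\theta := M_{h}u - u_{h} \in X_{h}$. The $\rho$ part is already fully controlled by the best-approximation results of Theorems \ref{best approximation} and \ref{mn}, giving $\|\rho\|_{L^{\infty}(L^{2})} \lesssim h^{2}$, $\|\nabla \rho\|_{L^{\infty}(L^{2})} \lesssim h$, and analogous bounds for ${}^{C}D_{t}^{\alpha}\rho$. The heart of the argument is therefore to estimate $\theta$ in the appropriate norms.

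To derive an equation for $\theta$, I would subtract \eqref{semidiscrete} from \eqref{nonlinear weak problem} with $v = v_{h} \in X_{h}$, and split the Kirchhoff difference as
\begin{equation*}
M(\|\nabla u\|^{2})(\nabla u,\nabla v_{h}) - M(\|\nabla u_{h}\|^{2})(\nabla u_{h},\nabla v_{h}) = M(\|\nabla u\|^{2})(\nabla(\rho+\theta),\nabla v_{h}) + \bigl[M(\|\nabla u\|^{2}) - M(\|\nabla u_{h}\|^{2})\bigr](\nabla u_{h},\nabla v_{h}).
\end{equation*}
The key cancellation is that the defining identity \eqref{2.5} of the modified Ritz-Volterra projection absorbs $M(\|\nabla u\|^{2})(\nabla \rho,\nabla v_{h})$ against the $\rho$-part of the memory integral, and $(f-f_{h},v_{h}) = 0$ by \eqref{A2a}. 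Hence
\begin{equation*}
({}^{C}D_{t}^{\alpha}\theta,v_{h}) + M(\|\nabla u\|^{2})(\nabla \theta,\nabla v_{h}) = -({}^{C}D_{t}^{\alpha}\rho,v_{h}) + \int_{0}^{t} B(t,s,\theta(s),v_{h})\,ds - \bigl[M(\|\nabla u\|^{2}) - M(\|\nabla u_{h}\|^{2})\bigr](\nabla u_{h},\nabla v_{h}).
\end{equation*}

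For the first estimate \eqref{cap2}, I would test with $v_{h} = \theta$ and invoke the Alikhanov-type inequality $({}^{C}D_{t}^{\alpha}\theta,\theta) \geq \tfrac{1}{2}{}^{C}D_{t}^{\alpha}\|\theta\|^{2}$, the lower bound $M \geq m_{0}$, the Lipschitz bound $|M(\|\nabla u\|^{2})-M(\|\nabla u_{h}\|^{2})| \leq 2L_{M}K(\|\nabla\rho\|+\|\nabla\theta\|)$ together with the uniform bounds on $\|\nabla u\|$ and $\|\nabla u_{h}\|$ from \eqref{AMP1as} and \eqref{AMP1as-1}, and the continuity \eqref{2.3} of $B$. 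Using Young's inequality to absorb mixed terms and the structural condition $m_{0} - 4L_{M}K^{2} > 0$ from (A2) to retain a strictly positive coefficient of $\|\nabla\theta\|^{2}$, one arrives at a fractional differential inequality of the form
\begin{equation*}
{}^{C}D_{t}^{\alpha}\|\theta\|^{2} + \|\nabla\theta\|^{2} \lesssim \|{}^{C}D_{t}^{\alpha}\rho\|^{2} + \|\nabla\rho\|^{2} + \|\theta\|^{2} + \int_{0}^{t}\|\nabla\theta(s)\|^{2}\,ds.
\end{equation*}
Applying the fractional integral operator, the bounds on $\rho$, and a generalized fractional Grönwall inequality then delivers $\|\theta\|_{L^{\infty}(L^{2})} + \|\theta\|_{L^{2}_{\alpha}(H^{1}_{0})} \lesssim h$, and \eqref{cap2} follows by the triangle inequality.

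The main obstacle is \eqref{cap3}, the uniform-in-time bound in the $H^{1}$ norm, because testing with $v_{h}=\theta$ only produces $\|\nabla \theta\|$ in a weighted-in-time norm. The natural remedy is to test instead with $v_{h} = {}^{C}D_{t}^{\alpha}\theta$, exploit the inequality $(\nabla\theta, {}^{C}D_{t}^{\alpha}\nabla\theta) \geq \tfrac{1}{2}{}^{C}D_{t}^{\alpha}\|\nabla\theta\|^{2}$, and bound the right-hand side using the estimates for ${}^{C}D_{t}^{\alpha}\rho$ from Theorem \ref{mn}; one must take care that the time derivative of the $M$-coefficient and of the memory kernel are handled via a priori $H^{1}_{0}$ regularity of $u_{h}$ in \eqref{AMP1as-1}. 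A second fractional Grönwall argument then yields $\|\nabla\theta\|_{L^{\infty}(L^{2})} \lesssim h$, and combined with $\|\nabla\rho\|_{L^{\infty}(L^{2})} \lesssim h$ we recover \eqref{cap3}.
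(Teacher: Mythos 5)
Your treatment of \eqref{cap2} is essentially the paper's argument: the same splitting $u-u_{h}=\rho+\theta$, the same cancellation of $M(\|\nabla u\|^{2})(\nabla\rho,\nabla v_{h})$ against the $\rho$-part of the memory integral via \eqref{2.5}, testing with $v_{h}=\theta$, the coercivity $({}^{C}D^{\alpha}_{t}\theta,\theta)\geq\tfrac12{}^{C}D^{\alpha}_{t}\|\theta\|^{2}$, the Lipschitz bound on $M$ with the structural condition $m_{0}-4L_{M}K^{2}>0$, and a fractional Gr\"{o}nwall step (the paper convolves with $l(t)=t^{\alpha-1}/\Gamma(\alpha)$ as in the proof of \eqref{AMP1as1}). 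The only cosmetic difference is that you let the Kirchhoff difference multiply $\nabla u_{h}$ while the paper lets it multiply $\nabla M_{h}u$; both factors are bounded by $K$ (via \eqref{AMP1as-1} and \eqref{16-10-22-7}), so this part is fine.

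For \eqref{cap3}, however, your proposed route has a genuine gap. You test the error equation with $v_{h}={}^{C}D^{\alpha}_{t}\theta$, but the right-hand side then contains the memory term $\int_{0}^{t}B(t,s,\theta(s),{}^{C}D^{\alpha}_{t}\theta)\,ds$ and the Kirchhoff-difference term $\bigl[M(\|\nabla u\|^{2})-M(\|\nabla u_{h}\|^{2})\bigr](\nabla u_{h},\nabla\,{}^{C}D^{\alpha}_{t}\theta)$, both of which pair against $\nabla\,{}^{C}D^{\alpha}_{t}\theta$. Your left-hand side controls only $\|{}^{C}D^{\alpha}_{t}\theta\|^{2}$ and ${}^{C}D^{\alpha}_{t}\|\nabla\theta\|^{2}$, so these terms cannot be absorbed, and no available estimate bounds $\|\nabla\,{}^{C}D^{\alpha}_{t}\theta\|$. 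In addition, passing from $(\nabla\theta,{}^{C}D^{\alpha}_{t}\nabla\theta)\geq\tfrac12{}^{C}D^{\alpha}_{t}\|\nabla\theta\|^{2}$ to $M(\|\nabla u(t)\|^{2})(\nabla\theta,{}^{C}D^{\alpha}_{t}\nabla\theta)\geq\tfrac{m_{0}}{2}{}^{C}D^{\alpha}_{t}\|\nabla\theta\|^{2}$ is not legitimate, since ${}^{C}D^{\alpha}_{t}\|\nabla\theta\|^{2}$ need not be nonnegative pointwise and the coefficient is time-dependent; your remark about handling ``the time derivative of the $M$-coefficient'' does not resolve either issue. The paper avoids all of this by testing instead with $v_{h}=-\Delta_{h}\theta$: the Kirchhoff term then produces the fully coercive quantity $m_{0}\|\Delta_{h}\theta\|^{2}$, the memory term is rewritten through the auxiliary operator $\Delta_{h}^{b_{2}}$ of \eqref{discrete laplace-1} with $\|\Delta_{h}^{b_{2}}\theta\|\lesssim\|\nabla\theta\|$, the projected term $\left(P_{h}\,{}^{C}D^{\alpha}_{t}\rho,-\Delta_{h}\theta\right)$ is handled via the $H^{1}$-stability \eqref{14-9-22-4-2} and $\|\nabla\,{}^{C}D^{\alpha}_{t}\rho\|\lesssim h$ from Theorem \ref{mn}, and $\|\Delta_{h}M_{h}u\|\lesssim K$ controls the Kirchhoff difference; everything is then absorbed into $\|\Delta_{h}\theta\|^{2}$ and the same Gr\"{o}nwall machinery applies at the $H^{1}$ level. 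You would need to switch to this (or an equivalent) test function for \eqref{cap3}; as written, the second half of your argument does not close.
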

\begin{proof} We write $u-u_{h}=\rho+\theta$ where 
\begin{equation*}
    \rho=u-M_{h}u \quad \text{and}\quad \theta =M_{h}u-u_{h}.
\end{equation*}
 Using the  semi discrete formulation \eqref{semidiscrete} with $u_{h}=M_{h}u-\theta$ one  have
	\begin{equation}\label{6-5-22-2}
	\begin{aligned}
	\left(^{C}D^{\alpha}_{t}(M_{h}u-\theta),v_{h}\right) &+M\left(\|\nabla u_{h}\|^{2}\right)(\nabla (M_{h}u-\theta),\nabla v_{h})\\
	&=(f_{h},v_{h})+\int_{0}^{t}B(t,s,M_{h}u(s)-\theta(s),v_{h})ds.
	\end{aligned}
	\end{equation}
	Since $f$ belongs to $L^{2}(\Omega)$ then we have  $(f,v_{h})=(f_{h},v_{h})$ for all $v_{h}$ in $X_{h}$. Using weak formulation \eqref{nonlinear weak problem} and definition of modified Ritz-Volterra projection \eqref{2.5} in equation \eqref{6-5-22-2}, we get
		\begin{equation}\label{6-5-22-2-1}
	\begin{aligned}
	\left(^{C}D^{\alpha}_{t}\theta,v_{h}\right)& +M\left(\|\nabla u_{h}\|^{2}\right)(\nabla \theta,\nabla v_{h})\\
	&=-\left(^{C}D^{\alpha}_{t}\rho,v_{h}\right)+\left[M\left(\|\nabla u_{h}\|^{2}\right)-M\left(\|\nabla u\|^{2}\right)\right](\nabla M_{h}u,\nabla v_{h})\\
	&+\int_{0}^{t}B(t,s,\theta(s),v_{h})~ds.
	\end{aligned}
	\end{equation}
Set $v_{h}=\theta$ in \eqref{6-5-22-2-1} and apply positivity along with Lipschitz continuity of diffusion coefficient (A2) to obtain 
	\begin{equation}\label{6-5-22-2-1-1}
	\begin{aligned}
	\left(^{C}D^{\alpha}_{t}\theta,\theta\right)& +m_{0}\|\nabla \theta\|^{2}\\
	&\lesssim \|~^{C}D^{\alpha}_{t}\rho\|\|\theta\|+L_{M}(\|\nabla u_{h}\|+\|\nabla u\|)~(\|\nabla \rho\|+\|\nabla \theta\|)~\|\nabla M_{h}u\|~\|\nabla \theta\|\\
	&+\|\nabla \theta \|\int_{0}^{t}\|\nabla \theta(s)\|~ds.
	\end{aligned}
	\end{equation}
We use a priori bounds on $\|\nabla u_{h}\|$ \eqref{AMP1as-1}, $\|\nabla u\| $ \eqref{AMP1as}, $\|\nabla M_{h}u\|$ \eqref{16-10-22-7} along with Young's inequality to arrive at 
	\begin{equation}\label{6-5-22-2-1-2}
	\begin{aligned}
	\left(^{C}D^{\alpha}_{t}\theta,\theta\right)+\left(m_{0}-4L_{M}K^{2}\right)\|\nabla \theta\|^{2} &\lesssim \|~^{C}D^{\alpha}_{t}\rho\|^{2}+\|\theta\|^{2}+\|\nabla \rho\|^{2}+ \int_{0}^{t}\|\nabla \theta(s)\|^{2}~ds.
	\end{aligned}
	\end{equation}
	 Apply the same arguments as we prove estimate \eqref{AMP1as1} to deduce
	\begin{equation}\label{16-10-22-6}
\|\theta\|^{2}_{L^{\infty}\left(0,T;L^{2}(\Omega)\right)}+\left(l\ast\|\nabla \theta\|^{2}\right)(t) \lesssim \left[l\ast \left(\|\nabla \rho\|^{2}+\|~^{C}D^{\alpha}_{t}\rho\|^{2}\right)\right](t)+\|\nabla \theta(0)\|^{2},
	\end{equation}
where $l(t)=\frac{t^{\alpha-1}}{\Gamma(\alpha)}$. Since $u_{h}^{0}=(M_{h}u)(0)$, thus $\theta(0)=0$. We apply the best approximation properties of modified Ritz-Volterra projection operator as in Theorem \ref{best approximation} and Theorem \ref{mn} along  with triangle inequality to conclude 
	\begin{equation*}
	\|u-u_{h}\|_{L^{\infty}\left(0,T;L^{2}(\Omega)\right)}+\|u-u_{h}\|_{L^{2}_{\alpha}\left(0,T;H^{1}_{0}(\Omega)\right)} \lesssim h.
	\end{equation*}
To prove the estimate \eqref{cap3}, we define another  discrete Laplacian operator $\Delta_{h}^{b_{2}} : X_{h}\rightarrow X_{h}$ which satisfies
\begin{equation}\label{discrete laplace-1}
	    (-\Delta_{h}^{b_{2}}~ u_{h},v_{h}):=(b_{2}(x,t,s)\nabla u_{h},\nabla v_{h})~~\forall~~ u_{h},v_{h} \in X_{h}, ~\forall~t,s~\in~[0,T].
	\end{equation}
	This operator is well defined as $b_{2}$ is a real symmetric and positive definite matrix. 
	Using the discrete Laplacian operators \eqref{discrete laplace} and \eqref{discrete laplace-1} we can rewrite the equation \eqref{6-5-22-2-1} as 
	\begin{equation}\label{6-5-22-2-3}
	\begin{aligned}
	&\left(^{C}D^{\alpha}_{t}\theta,v_{h}\right)+M\left(\|\nabla u_{h}\|^{2}\right)(-\Delta_{h}\theta, v_{h})\\
	&=-\left(P_{h}~^{C}D^{\alpha}_{t}\rho,v_{h}\right)+\left[M\left(\|\nabla u_{h}\|^{2}\right)-M\left(\|\nabla u\|^{2}\right)\right](-\Delta_{h} M_{h}u, v_{h})\\
	&+\int_{0}^{t}(-\Delta_{h}^{b_{2}}~\theta(s),v_{h})~ds+\int_{0}^{t}(\nabla \cdot(b_{1}(x,t,s)\theta(s)),v_{h})~ds+\int_{0}^{t}(b_{0}(x,t,s)\theta(s),v_{h})~ds.
	\end{aligned}
	\end{equation}
	Put $v_{h}=-\Delta_{h}\theta$ in \eqref{6-5-22-2-3} and apply positivity and Lipschitz continuity of diffusion coefficient (A2) together with Cauchy-Schwarz and Young's inequality to obtain
	\begin{equation}\label{6-5-22-2-4}
	\begin{aligned}
	&\left(^{C}D^{\alpha}_{t}\nabla \theta,\nabla \theta \right)+\|\Delta_{h}\theta\|^{2}\\
	&\lesssim \|\nabla P_{h}~^{C}D^{\alpha}_{t}\rho\|^{2}+\|\nabla \theta\|^{2}+L_{M}^{2}(\|\nabla u_{h}\|+\|\nabla u\|)^{2}~(\|\nabla \rho\|+\|\nabla \theta \|)^{2}~\|\Delta_{h} M_{h}u\|^{2}\\
	&+\int_{0}^{t}\|\Delta_{h}^{b_{2}}~\theta(s)\|^{2}~ds+\int_{0}^{t}\|\nabla \theta(s)\|^{2}~ds+\int_{0}^{t}\|\theta(s)\|^{2}~ds.
	\end{aligned}
	\end{equation}
	Now, we find the estimate on $\|\Delta_{h}^{b_{2}}~\theta(s)\|$ as 
	\begin{equation}
	\begin{aligned}
	    |(-\Delta_{h}^{b_{2}}~\theta(s),v_{h})|&=|(b_{2}(x,t,s)\nabla \theta(s),\nabla v_{h})|\\
	    &\lesssim \|\nabla \theta(s)\|~\|\nabla v_{h}\|.
	    \end{aligned}
	\end{equation}
	This implies 
	\begin{equation}\label{16-10-22-5}
	    \|\Delta_{h}^{b_{2}}~\theta(s)\|:=\sup_{v_{h}\in X_{h} \subset H^{1}_{0}(\Omega)}\frac{|(-\Delta_{h}^{b_{2}}~\theta(s),v_{h})|}{\|v_{h}\|_{1}}\lesssim \|\nabla \theta(s)\|.
	\end{equation}
	Use the estimates on $\|\nabla u_{h}\|$\eqref{AMP1as-1}, $\|\nabla u\|$\eqref{AMP1as}, $\|\Delta_{h}M_{h}u\|$ \eqref{16-10-22-7},$\|\Delta_{h}^{b_{2}}~\theta(s)\|$ \eqref{16-10-22-5} and \eqref{14-9-22-4-2} in \eqref{6-5-22-2-4} to reach at
		\begin{equation}\label{6-5-22-2-5}
	\begin{aligned}
	\left(^{C}D^{\alpha}_{t}\nabla \theta,\nabla \theta \right)+\|\Delta_{h}\theta\|^{2}
	&\lesssim \|\nabla~^{C}D^{\alpha}_{t}\rho\|^{2}+\|\nabla \rho\|^{2}+\|\nabla \theta \|^{2}+\int_{0}^{t}\|\nabla \theta(s)\|^{2}~ds\\
	&+\int_{0}^{t}\|\theta(s)\|^{2}~ds.
	\end{aligned}
	\end{equation}
	Employ approximation properties \eqref{B10}, \eqref{A10} and estimate \eqref{16-10-22-6} to deduce
		\begin{equation}\label{6-5-22-2-5-1}
	\begin{aligned}
	\left(^{C}D^{\alpha}_{t}\nabla \theta,\nabla \theta \right)+\|\Delta_{h}\theta\|^{2}
	&\lesssim h^{2}+h^2+\|\nabla \theta \|^{2}.
	\end{aligned}
	\end{equation}
	Finally, follow the similar steps as we prove estimate  \eqref{cap2} to conclude \eqref{cap3}.
\end{proof}

\section{Linearized L1 Galerkin FEM on graded mesh}\label{15-10-22-3}
Let $N$ be a positive integer and set the nodal  points $\left( t_{0}<t_{1}<t_{2}<\dots< t_{N}\right)$  on the interval $[0,T]$ as
\begin{equation}\label{SU4}
    t_{n}=T\left(\frac{n}{N}\right)^{\delta}~~\text{for}~~n=0,1,2,\dots,N,
\end{equation}
where $\delta\geq 1$ is called grading parameter. The time step $\tau_{n}$ is given by  $\tau_{n}=t_{n}-t_{n-1}$ for $n=1,2,3,\dots,N$. The nodal points $t_{n}$ satisfy the following relation 
\begin{equation}\label{29-11-22-2}
    t_{n}\leq 2^{\delta} t_{n-1},~\text{for}~n\geq 2.
\end{equation}
\begin{prop}
The time step size $\tau_{n}$ for $n=1,2,3,\dots,N$ satisfy  the following estimate
\begin{equation}\label{am1}
    \tau_{n}\leq \delta TN^{-\delta}n^{\delta-1}\leq \delta T^{1/\delta}N^{-1}t_{n}^{(1-1/\delta)}.
\end{equation}
\end{prop}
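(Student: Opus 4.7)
The plan is to obtain both inequalities in \eqref{am1} by a single application of the mean value theorem to the generating function of the graded mesh, followed by a simple algebraic rewrite.

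First, I would introduce the smooth interpolant $g(x) := T(x/N)^{\delta}$ on $[0,N]$, so that $t_n = g(n)$ and $\tau_n = g(n) - g(n-1)$. Since $g$ is $C^{1}$, the mean value theorem yields some $\xi_n \in (n-1,n)$ with
\begin{equation*}
\tau_n = g'(\xi_n) = \delta T N^{-\delta}\, \xi_n^{\delta-1}.
\end{equation*}
Because $\delta \geq 1$, the map $x \mapsto x^{\delta-1}$ is nondecreasing on $(0,\infty)$, and $\xi_n < n$, so $\xi_n^{\delta-1} \leq n^{\delta-1}$. This establishes the first inequality $\tau_n \leq \delta T N^{-\delta} n^{\delta-1}$.

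For the second inequality I would simply rearrange: writing $\delta T N^{-\delta} n^{\delta-1} = \delta T N^{-1} (n/N)^{\delta-1}$ and noting that $(n/N)^{\delta-1} = \bigl((n/N)^\delta\bigr)^{(\delta-1)/\delta} = (t_n/T)^{1-1/\delta}$, we obtain
\begin{equation*}
\delta T N^{-\delta} n^{\delta-1} = \delta T \cdot T^{-(1-1/\delta)}\, N^{-1}\, t_n^{1-1/\delta} = \delta T^{1/\delta} N^{-1}\, t_n^{1-1/\delta},
\end{equation*}
which is exactly the right-hand side in \eqref{am1}.

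No step is really an obstacle here; the only mild care point is the monotonicity argument in the MVT step, which relies on $\delta \geq 1$ (for $\delta = 1$ the mesh is uniform and both bounds reduce to $T/N$, consistent with the claim). The case $n=1$ needs no separate treatment since $t_0 = 0$ and the formula $\tau_1 = T N^{-\delta}$ satisfies $\tau_1 \leq \delta T N^{-\delta}\cdot 1^{\delta-1}$ directly.
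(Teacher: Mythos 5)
Your proof is correct and follows essentially the same route as the paper, which likewise obtains \eqref{am1} by applying the mean value theorem to $\Phi(x)=T\left(\frac{x}{N}\right)^{\delta}$ on $(n-1,n)$; you have simply written out the monotonicity step and the algebraic identity $\delta TN^{-\delta}n^{\delta-1}=\delta T^{1/\delta}N^{-1}t_{n}^{1-1/\delta}$ that the paper leaves implicit (and in fact the second "$\leq$" is an equality).
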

\begin{proof}
 The estimate \eqref{am1} follows by applying mean value theorem to the function $ {\Phi}(x)=T\left(\frac{x}{N}\right)^{\delta}$ on $(n-1,n)$.
\end{proof}
\noindent The time steps $\tau_{n}$ satisfy the $M$-Conv property \cite{ren2021sharp}, i.e., there exists a positive constant $C_{\delta}$  such that for every $N$, we have
\begin{equation}\label{M conv prop }
    \frac{\tau_{n}}{\tau_{n-1}}\leq C_{\delta} \quad \text{for}\quad 2\leq n \leq N.
\end{equation}
\noindent  Let $u^{n}~(1\leq n \leq N)$ be the value of $u$ at $t_{n}$ then the Caputo fractional derivative of $u$ at $t_{n}$ is given by 
\begin{equation*}\label{LSA1}
\begin{aligned}
^{C}D_{t}^{\alpha}u^{n}&=\frac{1}{\Gamma(1-\alpha)} \int_{0}^{t_{n}}\left(t_{n}-s\right)^{-\alpha}\frac{\partial u}{\partial s}(s)~ds
=\frac{1}{\Gamma(1-\alpha)} \sum_{j=1}^{n}\int_{t_{j-1}}^{t_{j}}\left(t_{n}-s\right)^{-\alpha}\frac{\partial u}{\partial s}(s)~ds.
\end{aligned}
\end{equation*}
\noindent  The Caputo fractional derivative is approximated  by the L1 scheme \cite{kopteva2019error} as 
\begin{equation}\label{p3a}
\begin{aligned}
    ^{C}D^{\alpha}_{t}u^{n}\approx \frac{1}{\Gamma(1-\alpha)} \sum_{j=1}^{n}\left(\frac{u^{j}-u^{j-1}}{\tau_{j}}\right)\int_{t_{j-1}}^{t_{j}}\left(t_{n}-s\right)^{-\alpha}ds.
    \end{aligned}
\end{equation}
Thus, $^{C}D^{\alpha}_{t}u^{n}$ can be written as 
\begin{equation}\label{p3}
\begin{aligned}
    ^{C}D^{\alpha}_{t}u^{n}=  \mathbb{D}^{\alpha}_{t}u^{n}+\mathbb{T}^{n},
    \end{aligned}
\end{equation}
where $\mathbb{D}^{\alpha}_{t}u^{n}$ denotes the approximation \eqref{p3a} of Caputo fractional derivative  and $\mathbb{T}^{n}$ denotes the truncation error that  arises due to this approximation. We write $\mathbb{D}^{\alpha}_{t}u^{n}$  in a simpler way as 
\begin{equation}\label{approximation of Caputo Derivative}
    \mathbb{D}^{\alpha}_{t}u^{n}:=\sum_{j=1}^{n}k_{n,j}(u^{j}-u^{j-1})=k_{n,n}u^{n}-\sum_{j=1}^{n}(k_{n,j}-k_{n,j-1})u^{j-1},
\end{equation}
where $k_{n,j}$ are called discrete kernels, that are given by 
\begin{equation}\label{kernel}
    k_{n,j}:=\frac{\tau_{j}^{-1}}{\Gamma(1-\alpha)}\int_{t_{j-1}}^{t_{j}}(t_{n}-s)^{-\alpha}ds~~\text{for}~~1\leq  j \leq n, ~~~\text{and}~~k_{n,0}:=0.
\end{equation}
\noindent Based on the approximation \eqref{approximation of Caputo Derivative}, we develop the following linearized fully discrete numerical scheme for the  equation \eqref{16-9-22-1}.\\
\noindent Find $u_{h}^{n} \in X_{h}$  for $n=1,2,3,\dots,N$ which satisfy  the following equations  for all $v_{h}\in X_{h}$,
\begin{align}
    \left(\mathbb{D}^{\alpha}_{t}u_{h}^{n},v_{h}\right)+M\left(\|\nabla \tilde{u}_{h}^{n-1}\|^{2}\right)\left(\nabla u_{h}^{n},\nabla v_{h}\right)&=\left(f_{h}^{n},v_{h}\right)+\sum_{j=1}^{n-1}\tau_{j+1}\bar{B}\left(t_{n},t_{j},u_{h}^{j},v_{h}\right),~n\geq 2,\label{SU1}\\
     \left(\mathbb{D}^{\alpha}_{t}u_{h}^{1},v_{h}\right)+M\left(\|\nabla u_{h}^{1}\|^{2}\right)\left(\nabla u_{h}^{1},\nabla v_{h}\right)&=\left(f_{h}^{1},v_{h}\right)+\tau_{1}B\left(t_{1},t_{1},u_{h}^{1},v_{h}\right)~\text{for}~n=1,\label{SUM1}\\
     u_{h}^{0}&=R_{h}u_{0}~\text{in}~X_{h},\label{SUM2}\\
     u_{h}^{n}&=0~\text{on}~\partial \Omega,~  \text{for}~n=1,2,3,\dots,N, \label{SUM3}
    \end{align}
where $\tilde{u}_{h}^{n-1}$ and $\bar{B}\left(t_{n},t_{j},u_{h}^{j},v_{h}\right)$ used in \eqref{SU1} are defined as
\begin{equation}\label{linearization}
\tilde{u}_{h}^{n-1}=
\left(1+\frac{\tau_{n}}{\tau_{n-1}}\right)u_{h}^{n-1}-\frac{\tau_{n}}{\tau_{n-1}}u_{h}^{n-2},
\end{equation}
and 
\begin{equation}\label{S9}
\bar{B}\left(t_{n},t_{j},u_{h}^{j},v_{h}\right)=\begin{cases}
			\frac{1}{2}\left[B\left(t_{n},t_{j},u_{h}^{j},v_{h}\right)+B\left(t_{n},t_{j+1},u_{h}^{j+1},v_{h}\right)\right], & ~\text{for}~ 1\leq j \leq n-2\\
            B\left(t_{n},t_{n-1},u_{h}^{n-1},v_{h}\right), & ~\text{for}~j=n-1.
		 \end{cases}
\end{equation}
\noindent As  discussed in the introduction that the numerical schemes based on the Newton method demand high computational cost as well as huge computer storage to solve the nonlinear system at each time steps in $[0,T]$. Also, the Jacobian of the corresponding nonlinear system is not sparse because of the nonlocality of the diffusion coefficient \cite{gudi2012finite}. To reduce these costs, we propose a new linearization technique \eqref{linearization} on nonuniform time mesh for the nonlocal nonlinear diffusion coefficient. 
\par For the approximation of memory  term, we apply the right rectangle rule over $\left[t_{0},t_{1}\right]$, the motivation for choosing the right rectangle rule over $\left[t_{0},t_{1}\right]$ comes from  the weak singularity \eqref{14-9-22-2} of the solution $u$ at $t_{0}$. Next, for $n\geq 2$  a modified trapezoidal rule is applied, in which we apply the  composite trapezoidal rule on $[t_{1},t_{n-1}]$ and then the left rectangle rule over $[t_{n-1},t_{n}]$ to remain in the linearized setup.
\subsection{A priori bounds on  numerical solutions}
In this subsection, we derive a priori bounds on  numerical solutions of the fully discrete formulation  \eqref{SU1}-\eqref{SUM3}. These a priori bounds play an important role in proving well-posedness  and convergence analysis of the proposed numerical scheme.
\par The following propositions  help us to derive a priori bounds on  numerical solutions of the developed numerical scheme \eqref{SU1}-\eqref{SUM3}.
\begin{prop}\label{5-11-22-1} The approximation $\mathbb{D}^{\alpha}_{t}u^{n}$ of Caputo fractional derivative satisfies the following inequality 
\begin{equation}\label{derivative positivity}
    \left(\mathbb{D}^{\alpha}_{t}u^{n},u^{n}\right) \geq \frac{1}{2}\mathbb{D}^{\alpha}_{t}\|u^{n}\|^{2}.
\end{equation}
\end{prop}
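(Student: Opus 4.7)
The plan is to exploit the discrete-kernel representation of $\mathbb{D}^{\alpha}_{t}u^{n}$ given in \eqref{approximation of Caputo Derivative} and mimic the classical ``product-rule'' argument for the continuous Caputo derivative by showing that the discrete kernels $\{k_{n,j}\}$ are nonnegative and nondecreasing in $j$ for fixed $n$. Once this sign structure is in hand, Cauchy--Schwarz and Young's inequality turn the cross-terms into a telescoping sum that collapses to exactly $\tfrac12 \mathbb{D}^{\alpha}_{t}\|u^{n}\|^{2}$.

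First I would take the inner product of both sides of the rearranged identity
\[
\mathbb{D}^{\alpha}_{t}u^{n}=k_{n,n}u^{n}-\sum_{j=1}^{n}(k_{n,j}-k_{n,j-1})\,u^{j-1}
\]
with $u^{n}$, so that
\[
\bigl(\mathbb{D}^{\alpha}_{t}u^{n},u^{n}\bigr)=k_{n,n}\|u^{n}\|^{2}-\sum_{j=1}^{n}(k_{n,j}-k_{n,j-1})\,(u^{j-1},u^{n}).
\]
Next, I would establish the monotonicity $0=k_{n,0}\le k_{n,1}\le k_{n,2}\le\cdots\le k_{n,n}$. From \eqref{kernel}, $\Gamma(1-\alpha)\,k_{n,j}$ is the mean value of the strictly increasing function $s\mapsto (t_{n}-s)^{-\alpha}$ over $[t_{j-1},t_{j}]$; since these intervals shift to the right with $j$, the means increase with $j$. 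This gives $k_{n,j}-k_{n,j-1}\ge 0$ for all $1\le j\le n$, which is exactly the sign condition required to apply an inequality termwise.

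With nonnegativity of the coefficients secured, I would bound each cross-term by Young's inequality,
\[
(u^{j-1},u^{n})\le \tfrac12\|u^{j-1}\|^{2}+\tfrac12\|u^{n}\|^{2},
\]
use the telescoping identity $\sum_{j=1}^{n}(k_{n,j}-k_{n,j-1})=k_{n,n}-k_{n,0}=k_{n,n}$, and then regroup:
\[
\bigl(\mathbb{D}^{\alpha}_{t}u^{n},u^{n}\bigr)\ge k_{n,n}\|u^{n}\|^{2}-\tfrac12\sum_{j=1}^{n}(k_{n,j}-k_{n,j-1})\|u^{j-1}\|^{2}-\tfrac12 k_{n,n}\|u^{n}\|^{2}.
\]
The right-hand side is precisely $\tfrac12\bigl[k_{n,n}\|u^{n}\|^{2}-\sum_{j=1}^{n}(k_{n,j}-k_{n,j-1})\|u^{j-1}\|^{2}\bigr]=\tfrac12\mathbb{D}^{\alpha}_{t}\|u^{n}\|^{2}$, again by \eqref{approximation of Caputo Derivative} applied to the scalar sequence $\|u^{n}\|^{2}$.

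I do not anticipate a genuine obstacle: the only nontrivial ingredient is the monotonicity of the discrete kernels, and this is essentially immediate from the integral definition because $(t_{n}-s)^{-\alpha}$ is increasing in $s$ on $[0,t_{n})$. The monotonicity holds on arbitrary meshes (uniform or graded), so no special property of the graded mesh \eqref{SU4} beyond $t_{j-1}<t_{j}<t_{n}$ is needed, which makes the proposition robust to the choice of grading parameter $\delta$.
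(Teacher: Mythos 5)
Your proof is correct and follows essentially the same route as the paper, which likewise invokes the monotonicity of the discrete kernels ($0<k_{n,j-1}<k_{n,j}$) together with Young's inequality applied in the expansion of $\left(\mathbb{D}^{\alpha}_{t}u^{n},u^{n}\right)$. Your write-up merely makes explicit the telescoping step and the mean-value justification of kernel monotonicity that the paper leaves implicit.
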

\begin{proof}
Use the property of discrete kernels ($0< k_{n,j-1} < k_{n,j}$ for all admissible $j$ and $n$), and Young's inequality in the definition of $ \left(\mathbb{D}^{\alpha}_{t}u^{n},u^{n}\right)$ to prove the estimate  \eqref{derivative positivity}.
\end{proof}
\begin{prop}\label{5-11-22-2} For any  $n \in \{1,2,3,\dots, N\}$, the discrete kernels  $k_{n,1}$ defined in  \eqref{kernel} satisfy the following bound
\begin{equation}\label{tau1kn1 estimate }
    \tau_{n}^{\alpha}k_{n,1}\leq \frac{1}{\Gamma(1-\alpha)}.
\end{equation}
\end{prop}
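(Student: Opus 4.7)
The plan is a one-step integral bound obtained from the monotonicity of the kernel $s\mapsto (t_n-s)^{-\alpha}$ on the first sub-interval. Starting from the definition \eqref{kernel}, I would observe that for any $s\in[t_0,t_1]=[0,t_1]$ we have $t_n-s\ge t_n-t_1$, so the integrand is largest at the right endpoint. Replacing it by this maximum value yields the explicit estimate
\[
k_{n,1}\ =\ \frac{\tau_1^{-1}}{\Gamma(1-\alpha)}\int_{t_0}^{t_1}(t_n-s)^{-\alpha}\,ds\ \le\ \frac{(t_n-t_1)^{-\alpha}}{\Gamma(1-\alpha)},
\]
which already has the right structure, replacing the integral by a quantity depending only on nodal points.

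Next I would convert $t_n-t_1$ into the final step size $\tau_n$. For $n\ge 2$ this comes at once from the telescoping identity
\[
t_n-t_1\ =\ \tau_n+\tau_{n-1}+\cdots+\tau_2\ \ge\ \tau_n,
\]
so $(t_n-t_1)^{-\alpha}\le \tau_n^{-\alpha}$, and multiplying the previous inequality by $\tau_n^{\alpha}$ produces the claimed bound. Note that neither the graded mesh law \eqref{SU4} nor the $M$-Conv property \eqref{M conv prop } is actually needed here; the bound relies only on the positivity of the time increments.

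The one genuinely borderline case is $n=1$, where the interval of integration touches the singularity of $(t_1-s)^{-\alpha}$ at $s=t_1$, so the monotonicity step no longer applies. Here I would simply integrate explicitly, using $\int_0^{t_1}(t_1-s)^{-\alpha}\,ds=\tau_1^{1-\alpha}/(1-\alpha)$ together with the identity $\Gamma(2-\alpha)=(1-\alpha)\Gamma(1-\alpha)$ to rewrite $\tau_1^{\alpha}k_{1,1}$ in closed form. I expect this $n=1$ case to be the only subtle point in the proof, since the bulk of the argument for $n\ge 2$ is just a pointwise monotonicity estimate followed by a telescoping sum.
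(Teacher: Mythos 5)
For $n\ge 2$ your argument is correct and is essentially the paper's proof in elementary form: the paper applies the mean value theorem to $\psi(x)=-(t_{n}-x)^{1-\alpha}/\Gamma(2-\alpha)$, which yields $k_{n,1}=(t_{n}-\xi)^{-\alpha}/\Gamma(1-\alpha)$ for some $\xi\in(t_{0},t_{1})$, followed by the same comparison $t_{n}-\xi\ge t_{n}-t_{1}\ge\tau_{n}$ that you perform via the endpoint bound on the integrand. You are also right that neither the grading law \eqref{SU4} nor the property \eqref{M conv prop } is needed for this step.

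The genuine gap is the case $n=1$, which you flag as the delicate one but never finish. Your own closed-form computation gives $\tau_{1}^{\alpha}k_{1,1}=1/\Gamma(2-\alpha)$, and since $\Gamma(2-\alpha)=(1-\alpha)\Gamma(1-\alpha)<\Gamma(1-\alpha)$ for $\alpha\in(0,1)$, this equals $\frac{1}{(1-\alpha)\Gamma(1-\alpha)}>\frac{1}{\Gamma(1-\alpha)}$. So the inequality \eqref{tau1kn1 estimate } with the constant $1/\Gamma(1-\alpha)$ is in fact violated at $n=1$ (for $\alpha=1/2$ it fails by a factor of $2$), and no way of ``rewriting the closed form'' can close this; the statement itself is off at $n=1$. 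To be fair, the paper's one-line MVT proof has the same blind spot: for $n=1$ the mean value point satisfies $t_{1}-\xi<\tau_{1}$, so the bound does not follow there either. The harmless repair is to state the proposition with the constant $1/\Gamma(2-\alpha)$, which is attained at $n=1$ and dominates the bound $1/\Gamma(1-\alpha)$ that your $n\ge2$ argument delivers, or to restrict \eqref{tau1kn1 estimate } to $n\ge 2$; all subsequent uses (e.g.\ in \eqref{weight estimate} and \eqref{hg-2}) only invoke the estimate up to a generic constant, so nothing downstream is affected. In your write-up, replace the vague closing of the $n=1$ case with this explicit observation.
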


\begin{proof} Let $\psi:(t_{0},t_{1})\rightarrow \mathbb{R}$ be the function defined by 
\begin{equation*}
    \psi(x)=\frac{-(t_{n}-x)^{1-\alpha}}{\Gamma (2-\alpha)},
\end{equation*}
then apply mean value theorem on the function $\psi$ to get the result \eqref{tau1kn1 estimate }.
\end{proof}

\begin{prop}\label{7-8-1}
For any $n \in \{2,3,4,\dots,N\}$ and $j=1,2,3,\dots,n-1$, define $w_{n,j}$ by
\begin{equation}
   w_{n,j}= \tau_{n}^{\alpha/2}\tau_{j+1}\tau_{j}^{-\alpha/2}+\tau_{n}^{\alpha}(k_{n,j+1}-k_{n,j}).
\end{equation}
Then $w_{n,j}>0$, and 
\begin{equation}\label{weight estimate}
    \sum_{j=1}^{n-1}w_{n,j}\lesssim 1+T.
\end{equation}
\end{prop}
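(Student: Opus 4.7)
The plan is to bound $\sum_{j=1}^{n-1} w_{n,j}$ by splitting it into the two contributions naturally appearing in $w_{n,j}$ and estimating each separately. Positivity is immediate for the first summand (a product of positive quantities), and for the second it reduces to the monotonicity $k_{n,j}<k_{n,j+1}$ (already recorded in Proposition \ref{5-11-22-1}); this monotonicity follows because, by \eqref{kernel}, $k_{n,j}$ is up to the factor $1/\Gamma(1-\alpha)$ the mean value of the strictly increasing function $s\mapsto (t_{n}-s)^{-\alpha}$ on $[t_{j-1},t_{j}]$, which increases in $j$.

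For the kernel contribution the sum telescopes:
$$\sum_{j=1}^{n-1}\tau_{n}^{\alpha}\bigl(k_{n,j+1}-k_{n,j}\bigr)=\tau_{n}^{\alpha}(k_{n,n}-k_{n,1})\le\tau_{n}^{\alpha}k_{n,n}.$$
A direct evaluation of \eqref{kernel} at $j=n$ gives $k_{n,n}=\tau_{n}^{-\alpha}/\Gamma(2-\alpha)$, so this contribution is bounded by $1/\Gamma(2-\alpha)\lesssim 1$.

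For the mesh contribution I rewrite the summand as $\tau_{j+1}(\tau_{n}/\tau_{j})^{\alpha/2}$ and exploit the explicit graded mesh $t_{j}=T(j/N)^{\delta}$. The mean-value argument behind \eqref{am1} yields $\tau_{j}\le\delta TN^{-\delta}j^{\delta-1}$, and for $j\ge 2$ the two-sided estimate $\tau_{j}\gtrsim TN^{-\delta}j^{\delta-1}$ (using $j-1\ge j/2$ together with monotonicity of $x^{\delta-1}$ for $\delta\ge 1$). Combining these,
$$\tau_{j+1}\bigl(\tau_{n}/\tau_{j}\bigr)^{\alpha/2}\;\lesssim\;TN^{-\delta}\,n^{(\delta-1)\alpha/2}\,j^{(\delta-1)(1-\alpha/2)}\qquad(j\ge 2).$$
Since $(\delta-1)(1-\alpha/2)>-1$, a Riemann-sum comparison gives $\sum_{j=2}^{n-1}j^{(\delta-1)(1-\alpha/2)}\lesssim n^{(\delta-1)(1-\alpha/2)+1}$, and multiplying by $\tau_{n}^{\alpha/2}\lesssim T^{\alpha/2}N^{-\delta\alpha/2}n^{(\delta-1)\alpha/2}$ collapses the powers of $n$ into
$$\tau_{n}^{\alpha/2}\sum_{j=2}^{n-1}\tau_{j+1}\tau_{j}^{-\alpha/2}\;\lesssim\;T\,(n/N)^{\delta}=t_{n}\le T.$$
The isolated $j=1$ term requires a short separate treatment because the lower bound on $\tau_{j}$ degenerates at $j=1$ when $\delta>1$; using $\tau_{1}=TN^{-\delta}$ exactly, together with $\tau_{2}\lesssim TN^{-\delta}$ and $\tau_{n}^{\alpha/2}\lesssim T^{\alpha/2}$, this single term is also $\lesssim T$. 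Adding the kernel and mesh contributions gives the claimed bound.

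The main obstacle is bookkeeping in the mesh part: keeping track of the powers of $T$, $N$, $n$ and $j$, confirming that $(\delta-1)(1-\alpha/2)>-1$ so that the discrete sum is controlled by its integral analogue, and handling the boundary index $j=1$ separately. The cancellation $(\delta-1)\alpha/2+(\delta-1)(1-\alpha/2)+1=\delta$ is what ultimately produces the crucial factor $t_{n}\le T$, after which both halves of the estimate are evident and the inequality $\sum_{j=1}^{n-1}w_{n,j}\lesssim 1+T$ follows.
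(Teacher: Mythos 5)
Your proof is correct, and its skeleton (positivity from the kernel monotonicity, telescoping the kernel part to $\tau_n^{\alpha}(k_{n,n}-k_{n,1})\le\tau_n^{\alpha}k_{n,n}=1/\Gamma(2-\alpha)$, and bounding the mesh part separately) matches the paper's. The one place you diverge is the mesh sum: the paper invokes the M-Conv property \eqref{M conv prop } to write $\tau_{j+1}\tau_j^{-\alpha/2}\le C_{\delta}\tau_j^{1-\alpha/2}$ and then only needs the upper bound \eqref{am1} on step sizes, bounding $\tau_n^{\alpha/2}\sum_j\tau_j^{1-\alpha/2}$ crudely by a constant plus $T$; you instead work with the explicit graded mesh, which forces you to establish a two-sided estimate $\tau_j\simeq TN^{-\delta}j^{\delta-1}$ (hence the separate $j=1$ case, which you handle correctly) but rewards you with the cleaner intermediate bound $\tau_n^{\alpha/2}\sum_{j}\tau_{j+1}\tau_j^{-\alpha/2}\lesssim t_n\le T$ after the exponent cancellation $(\delta-1)\alpha/2+(\delta-1)(1-\alpha/2)+1=\delta$. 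Both routes are valid; the paper's is shorter and avoids lower bounds on the steps, yours tracks the powers explicitly and is slightly sharper. One cosmetic remark: your displayed per-term bound already contains the factor $(\tau_n/\tau_j)^{\alpha/2}$, so the subsequent phrase about ``multiplying by $\tau_n^{\alpha/2}$'' double-describes that factor; the final display and exponent count are nevertheless consistent with the correct computation, so this is a wording slip rather than a gap.
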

\begin{proof} For $j \in \{1,2,3,\dots,n\}$, $\tau_{j}>0$ and the property of discrete kernel  $k_{n,j}> k_{n,j-1}$ implies $w_{n,j}>0.$ Further,
\begin{equation*}
    \begin{aligned}
        \sum_{j=1}^{n-1}w_{n,j}= \sum_{j=1}^{n-1}\left(\tau_{n}^{\alpha/2}\tau_{j+1}\tau_{j}^{-\alpha/2}+\tau_{n}^{\alpha}(k_{n,j+1}-k_{n,j})\right).
         \end{aligned}
\end{equation*}
Using M-Conv property \eqref{M conv prop }, one have 
\begin{equation*}
    \begin{aligned}
        \sum_{j=1}^{n-1}w_{n,j}\leq \sum_{j=1}^{n-1}\tau_{n}^{\alpha/2}C_{\delta}\tau_{j}^{1-\alpha/2}+\tau_{n}^{\alpha}(k_{n,n}-k_{n,1}).
        \end{aligned}
\end{equation*}
 Definition of $k_{n,n}$ and estimate \eqref{tau1kn1 estimate } yield
\begin{equation*}
    \begin{aligned}
        \sum_{j=1}^{n-1}w_{n,j}\lesssim  \tau_{n}^{\alpha/2} \sum_{j=1}^{n-1}\tau_{j}^{1-\alpha/2}+1.
    \end{aligned}
\end{equation*}
employing the property \eqref{am1} of the graded mesh, we obtain  
\begin{equation*}
    \begin{aligned}
        \sum_{j=1}^{n-1}w_{n,j}&\lesssim  N^{-\alpha/2}t_{n}^{(1-1/\delta)(\alpha/2)} \sum_{j=1}^{n-1}N^{-(1-\alpha/2)}t_{j}^{(1-1/\delta)(1-\alpha/2)}+1\\
      &\lesssim  N^{-\alpha/2}\sum_{j=1}^{n-1}N^{-(1-\alpha/2)}+1  \lesssim  1+N^{-1}NT=1+T.
    \end{aligned}
\end{equation*}
\end{proof}
\noindent To derive a priori bounds on numerical solutions of the fully discrete formulation \eqref{SU1}-\eqref{SUM3}, we also define a new weighted $H^{1}(\Omega)$ norm by 
\begin{equation}\label{new norm}
    \||u_{h}^{n}\||:=\|u_{h}^{n}\|+\tau^{\alpha/2}_{n}\|\nabla u_{h}^{n}\|~~~\text{for}~~~1\leq n \leq N,~~\text{and}~~\||u_{h}^{0}\||:=\|u_{h}^{0}\|.
\end{equation}
\begin{thm} Suppose that $(A1)$-$(A2)$ hold. Then the numerical solutions $u_{h}^{n},~(1\leq n \leq N)$ of the numerical scheme \eqref{SU1}-\eqref{SUM3} satisfy the following a priori bound
\begin{equation}\label{numerical bound}
    \max_{1\leq n \leq N}\||u_{h}^{n}\|| \lesssim \|\nabla u_{0}\|+\max_{1\leq n \leq N}\|f^{n}\|.
\end{equation}
\end{thm}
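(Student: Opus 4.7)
The plan is to test the variational equation \eqref{SU1} with $v_h = u_h^n$, apply the positivity property of the discrete fractional derivative (Proposition \ref{5-11-22-1}), the positivity of $M$ from (A2), and the continuity bound \eqref{2.3} of $B$; then multiply the resulting energy inequality by $2\tau_n^\alpha$ so that the weighted $L^2$ piece of $\||u_h^n\||$ emerges cleanly from the L1 derivative; and finally close the argument by a discrete Gr\"onwall inequality using the uniform bound on $\sum_j w_{n,j}$ from Proposition \ref{7-8-1}.

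We would first deal with the base case $n=1$ separately, since \eqref{SUM1} has a structure different from \eqref{SU1}. Testing \eqref{SUM1} with $v_h = u_h^1$, noting that $\mathbb{D}^\alpha_t u_h^1 = k_{1,1}(u_h^1 - u_h^0)$, and invoking (A2) together with $|B(t_1,t_1,u_h^1,u_h^1)|\leq B_0\|\nabla u_h^1\|^2$, we arrive at
\begin{equation*}
\tfrac{k_{1,1}}{2}\|u_h^1\|^2 + m_0 \|\nabla u_h^1\|^2 \leq \tfrac{k_{1,1}}{2}\|u_h^0\|^2 + \|f^1\|\,\|u_h^1\| + B_0 \tau_1 \|\nabla u_h^1\|^2.
\end{equation*}
For $N$ sufficiently large, $B_0\tau_1 \leq m_0/2$ and the last term is absorbable; multiplying by $2\tau_1^\alpha$, using $\tau_1^\alpha k_{1,1} = 1/\Gamma(2-\alpha)$, and finishing with Young's inequality then yields $\||u_h^1\||^2 \lesssim \|u_h^0\|^2 + \|f^1\|^2 \lesssim \|\nabla u_0\|^2 + \|f^1\|^2$, where the Ritz-projection bound on $u_h^0 = R_h u_0$ provides the last step.

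For $n\geq 2$, testing \eqref{SU1} with $v_h=u_h^n$ and invoking Proposition \ref{5-11-22-1}, (A2), and the continuity of $\bar{B}$ gives the base energy inequality
\begin{equation*}
\tfrac{1}{2}\mathbb{D}^\alpha_t\|u_h^n\|^2 + m_0\|\nabla u_h^n\|^2 \leq \|f^n\|\|u_h^n\| + B_0\sum_{j=1}^{n-1}\tau_{j+1}\|\nabla u_h^j\|\,\|\nabla u_h^n\|.
\end{equation*}
The central manipulation is to multiply by $2\tau_n^\alpha$ and unfold the discrete derivative as
\begin{equation*}
\tau_n^\alpha \mathbb{D}^\alpha_t\|u_h^n\|^2 = \tfrac{\|u_h^n\|^2}{\Gamma(2-\alpha)} - \tau_n^\alpha k_{n,1}\|u_h^0\|^2 - \sum_{j=1}^{n-1}\tau_n^\alpha(k_{n,j+1}-k_{n,j})\|u_h^j\|^2,
\end{equation*}
bounding the $\|u_h^0\|^2$ coefficient by Proposition \ref{5-11-22-2}. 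For the memory term we factor $\tau_n^\alpha\tau_{j+1} = (\tau_n^{\alpha/2}\tau_{j+1}\tau_j^{-\alpha/2})\,(\tau_n^{\alpha/2}\tau_j^{\alpha/2})$ and apply Young's inequality with a parameter $\epsilon$ tuned via the estimate $\sum_j\tau_n^{\alpha/2}\tau_{j+1}\tau_j^{-\alpha/2}\lesssim 1+T$ from Proposition \ref{7-8-1}, so that half of the cross-term is absorbable into $2m_0(\tau_n^{\alpha/2}\|\nabla u_h^n\|)^2$ on the LHS while the remainder appears as $C\sum_j(\tau_n^{\alpha/2}\tau_{j+1}\tau_j^{-\alpha/2})(\tau_j^{\alpha/2}\|\nabla u_h^j\|)^2$. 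Treating $\tau_n^\alpha\|f^n\|\|u_h^n\|$ by Young's and using $\tau_n^\alpha\leq T^\alpha$ moves $\|f^n\|^2$ to the data side.

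Combining the two sum-contributions via the definition of $w_{n,j}$ and the elementary bounds $\|u_h^j\|\leq \||u_h^j\||$, $\tau_j^{\alpha/2}\|\nabla u_h^j\|\leq \||u_h^j\||$ produces the central inequality
\begin{equation*}
\||u_h^n\||^2 \lesssim \|\nabla u_0\|^2 + \|f^n\|^2 + \sum_{j=1}^{n-1}w_{n,j}\,\||u_h^j\||^2.
\end{equation*}
The proof then concludes by a discrete Gr\"onwall argument, exploiting the uniform bound $\sum_{j=1}^{n-1}w_{n,j}\lesssim 1+T$ from Proposition \ref{7-8-1}. This Gr\"onwall step is the main obstacle: because $w_{n,j}$ depends intrinsically on $n$ and is not majorized pointwise by $C\tau_{j+1}$, care is required to invoke the appropriate variant of the discrete Gr\"onwall inequality adapted to the graded-mesh weights (or, alternatively, an induction argument that propagates through the uniform bound on $\sum_j w_{n,j}$) to deduce $\max_{1\leq n\leq N}\||u_h^n\||^2 \lesssim \|\nabla u_0\|^2 + \max_{1\leq k\leq N}\|f^k\|^2$, after which \eqref{numerical bound} follows by taking square roots.
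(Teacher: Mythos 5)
Your proposal follows essentially the same route as the paper's proof: treat $n=1$ separately with $\tau_1$ small, then for $n\geq 2$ test \eqref{SU1} with $v_h=u_h^n$, use Proposition \ref{5-11-22-1}, (A2) and \eqref{2.3}, weight by $\tau_n^{\alpha}$ so that the norm $\||\cdot\||$ appears, control $\tau_n^{\alpha}k_{n,1}$ and $\sum_j w_{n,j}$ by Propositions \ref{5-11-22-2} and \ref{7-8-1}, and close with a discrete Gr\"{o}nwall argument. The only cosmetic differences are that you keep squared norms via Young's inequality where the paper cancels a single factor $\||u_h^m\||$ at a maximizing index $m$, and the Gr\"{o}nwall step with the $n$-dependent weights $w_{n,j}$, which you rightly flag as the delicate point, is precisely the step the paper also invokes without further elaboration.
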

\begin{proof} For $n=1$,
put the value of  $\mathbb{D}^{\alpha}_{t}u_{h}^{1}$ in \eqref{SUM1} and denote $\gamma_{0}=\Gamma(2-\alpha)$ to get 
\begin{equation}\label{as}
    (u_{h}^{1}-u_{h}^{0},v_{h})+\tau_{1}^{\alpha}\gamma_{0}M\left(\|\nabla u_{h}^{1}\|^{2}\right)\left(\nabla u_{h}^{1},\nabla v_{h}\right)=\tau_{1}^{\alpha}\gamma_{0}\left(f_{h}^{1},v_{h}\right)+\tau_{1}^{\alpha}\gamma_{0}\tau_{1}B\left(t_{1},t_{1},u_{h}^{1},v_{h}\right).
\end{equation}
Set $v_{h}=u_{h}^{1}$ in \eqref{as}  and use positivity of diffusion coefficient (A2), continuity of memory operator \eqref{2.3} along with  Cauchy-Schwarz inequality to obtain
\begin{equation}\label{ADY1}
 \|u_{h}^{1}\|^{2}+\tau_{1}^{\alpha}\gamma_{0}m_{0}\|\nabla u_{h}^{1}\|^{2}\leq \|u_{h}^{0}\|\|u_{h}^{1}\|+\tau_{1}^{\alpha}\gamma_{0}\|f_{h}^{1}\|\| u_{h}^{1}\|+\tau_{1}^{1+\alpha}\gamma_{0}B_{0}\|\nabla u_{h}^{1}\|^{2}.
 \end{equation}
 Simplification of \eqref{ADY1} leads to  
 \begin{equation*}\label{ADY2}
\begin{aligned}
 \|u_{h}^{1}\|^{2}+\tau_{1}^{\alpha}\gamma_{0}(m_{0}-B_{0}\tau_{1})\|\nabla u_{h}^{1}\|^{2}&\leq \|u_{h}^{0}\|\|u_{h}^{1}\|+\tau_{1}^{\alpha}\gamma_{0}\|f_{h}^{1}\|\| u_{h}^{1}\|.
 \end{aligned}
 \end{equation*}
 For sufficiently small $\tau_{1}<\frac{m_{0}}{B_{0}}$ ~$\left(\text{say}~\tau_{1}=\frac{m_{0}}{2B_{0}} \right)$, we have 
 \begin{equation*}
     \|u_{h}^{1}\|^{2}+\tau_{1}^{\alpha}\|\nabla u_{h}^{1}\|^{2}\lesssim \|u_{h}^{0}\|\|u_{h}^{1}\|+\tau_{1}^{\alpha}\|f_{h}^{1}\|\|u_{h}^{1}\|\lesssim \|u_{h}^{0}\|\|u_{h}^{1}\|+\|f_{h}^{1}\|\|u_{h}^{1}\|.
 \end{equation*}
 Definition \eqref{new norm} of weighted $H^{1}(\Omega)$ norm yields
 \begin{equation}\label{ADY2-1}
     \||u_{h}^{1}\||^{2}\lesssim \left(\|u_{h}^{0}\|+\|f_{h}^{1}\|\right)\||u_{h}^{1}\||.
 \end{equation}
 Cancel $\||u_{h}^{1}\||$ from both sides of \eqref{ADY2-1} and apply Poincar\'{e} inequality to arrive at  
 \begin{equation}\label{18-10-22-1-12}
     \||u_{h}^{1}\||\lesssim \left(\|u_{h}^{0}\|+\|f_{h}^{1}\|\right)\lesssim \|\nabla u_{h}^{0}\|+\max_{1\leq n \leq N}\|f_{h}^{n}\|.
 \end{equation}
 Further, using $\|\nabla u_{h}^{0}\|\lesssim \|\nabla u_{0}\|$ and $\|f_{h}\|\lesssim \|f\|$ in \eqref{18-10-22-1-12}, we deduce
  \begin{equation}\label{18-10-22-1}
     \||u_{h}^{1}\||\lesssim \|\nabla u_{0}\|+\max_{1\leq n \leq N}\|f^{n}\|.
 \end{equation}
  If 
 \begin{equation}\label{18-10-22-2}
     \max_{1\leq n \leq N}\||u_{h}^{n}\||=\||u_{h}^{1}\||,
 \end{equation}
 then proof of the result  \eqref{numerical bound} follows from the estimate  \eqref{18-10-22-2}. Otherwise, there exists $m~\in~\{2,3,4,\dots,N\}$ such that
  \begin{equation}\label{18-10-22-3}
     \max_{1\leq n \leq N}\||u_{h}^{n}\||=\||u_{h}^{m}\||.
 \end{equation}
 \noindent Now, consider the case  $n\geq 2$  and substitute $v_{h}=u_{h}^{n}$ in \eqref{SU1} to obtain
 \begin{equation}\label{18-10-22-4}
    \left(\mathbb{D}^{\alpha}_{t}u_{h}^{n},u_{h}^{n}\right)+M\left(\|\nabla \tilde{u}_{h}^{n-1}\|^{2}\right)\left(\nabla u_{h}^{n},\nabla u_{h}^{n}\right)=\left(f_{h}^{n},u_{h}^{n}\right)+\sum_{j=1}^{n-1}\tau_{j+1}\bar{B}\left(t_{n},t_{j},u_{h}^{j},u_{h}^{n}\right).
    \end{equation}
 Apply estimate \eqref{derivative positivity} and  positivity of diffusion coefficient (A2) together with Cauchy-Schwarz inequality and continuity of memory operator  \eqref{2.3}  to get 
 \begin{equation}\label{hg}
     \frac{1}{2}\mathbb{D}^{\alpha}_{t}\|u_{h}^{n}\|^{2}+m_{0}\|\nabla u_{h}^{n}\|^{2}\leq \|f_{h}^{n}\|\|u_{h}^{n}\|+\sum_{j=1}^{n-1}\tau_{j+1}B_{0}\|\nabla u_{h}^{j}\|\|\nabla u_{h}^{n}\|.
 \end{equation}
 Since the equation \eqref{hg} is true for all $n,~(2\leq n \leq N)$, therefore it is also true for $m,~(2\leq m \leq N)$  such that relation  \eqref{18-10-22-3} holds. Thus by the definition of $\mathbb{D}^{\alpha}_{t}\|u_{h}^{n}\|^{2}$  \eqref{approximation of Caputo Derivative} and  weighted norm \eqref{new norm} we have 
 \begin{equation}\label{hg-1}
 \begin{aligned}
     \||u_{h}^{m}\||^{2}&\lesssim\left( \tau_{m}^{\alpha}\|f_{h}^{m}\|+\tau_{m}^{\alpha/2}\sum_{j=1}^{m-1}\tau_{j+1}\tau_{j}^{-\alpha/2}\|| u_{h}^{j}\||+\tau_{m}^{\alpha}\sum_{j=0}^{m-1}(k_{m,j+1}-k_{m,j})\||u_{h}^{j}\||\right)\||u_{h}^{m}\||.
     \end{aligned}
 \end{equation}
 Cancelling $\||u_{h}^{m}\||$ from both sides of \eqref{hg-1} and using discrete Gr\"{o}nwall's inequality, we get
 \begin{equation}\label{hg-2}
 \begin{aligned}
     \||u_{h}^{m}\||&\lesssim\left( \tau_{m}^{\alpha}\|f_{h}^{m}\|+\tau_{m}^{\alpha}k_{m,1}\||u_{h}^{0}\||\right)exp\left(\sum_{j=1}^{m-1}w_{m,j}\right).
     \end{aligned}
 \end{equation}
 Finally, estimates \eqref{tau1kn1 estimate }, \eqref{weight estimate}, and $\tau_{m}^{\alpha} \lesssim t_{m}^{\alpha}\lesssim T^{\alpha}$ imply 
 \begin{equation}\label{df}
     \max_{1\leq n \leq N}\||u_{h}^{n}\||\lesssim \|u_{h}^{0}\|+\max_{1\leq n \leq N}\|f_{h}^{n}\|\lesssim \|\nabla u_{0}\|+\max_{1\leq n \leq N}\|f^{n}\|.
 \end{equation}
\end{proof}
\noindent The equation \eqref{16-9-22-1} has a  gradient type nonlinearity, so we need to obtain gradient bound on numerical solutions of the scheme \eqref{SU1}-\eqref{SUM3}. The estimate \eqref{hg-2} yields 
\begin{equation}\label{poi}
    \|\nabla u_{h}^{m}\|\lesssim \tau_{m}^{\frac{-\alpha}{2}}\|u_{0}\|+\|f^{m}\|.
\end{equation}
From the estimate \eqref{poi}, it is observed that gradient is bounded only when initial condition is zero. To avoid this situation, we make use of  definitions of discrete Laplacian operators \eqref{discrete laplace} and \eqref{discrete laplace-1}. 

\begin{thm}\label{12345-2}
Suppose that $(A1)$-$(A2)$ hold. Then  numerical solutions $u_{h}^{n},(1\leq n \leq N)$ of the scheme \eqref{SU1}-\eqref{SUM3} satisfy the following a priori bound
\begin{equation}\label{grad bound}
    \max_{1\leq n \leq N}\|\nabla u_{h}^{n}\|\lesssim \|\nabla u_{0}\|+\max_{1\leq n \leq N}\| f^{n}\|.
\end{equation}
\end{thm}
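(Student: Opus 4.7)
The plan is to test the fully discrete scheme with $v_h = -\Delta_h u_h^n$ and exploit the identity $(w_h, -\Delta_h u_h^n) = (\nabla w_h, \nabla u_h^n)$ for every $w_h \in X_h$. This mirrors the argument used for the $H^1$ semidiscrete estimate \eqref{cap3}, now carried out at the fully discrete level. The choice $v_h = -\Delta_h u_h^n$ turns the diffusion bilinear form into $M(\|\nabla \tilde u_h^{n-1}\|^2)\|\Delta_h u_h^n\|^2$, which is coercive by (A2), while the fractional time term converts into $(\mathbb{D}^\alpha_t \nabla u_h^n, \nabla u_h^n)$, to which the component-wise analog of Proposition \ref{5-11-22-1} applies to yield the crucial lower bound $\tfrac12 \mathbb{D}^\alpha_t \|\nabla u_h^n\|^2$. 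This avoids the $\tau_n^{-\alpha/2}$ loss suffered by the naive bound \eqref{poi}.

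For $n=1$, substituting $v_h = -\Delta_h u_h^1$ in \eqref{SUM1} and using $(u_h^1 - u_h^0, -\Delta_h u_h^1) \geq \tfrac12(\|\nabla u_h^1\|^2 - \|\nabla u_h^0\|^2)$ together with the small-$\tau_1$ argument already used for \eqref{18-10-22-1} (now applied to $\|\nabla u_h^1\|$) gives $\|\nabla u_h^1\| \lesssim \|\nabla u_h^0\| + \|f_h^1\|$. For $n \geq 2$, I test \eqref{SU1} with $v_h = -\Delta_h u_h^n$. The source contribution is bounded by $\|f_h^n\|\,\|\Delta_h u_h^n\|$. Each memory piece $\bar B(t_n,t_j,u_h^j,-\Delta_h u_h^n)$ is rewritten using the auxiliary operator $\Delta_h^{b_2}$ for its principal part and by elementwise integration by parts for the convective $b_1$ contribution, replacing $(b_1 u_h^j,\nabla(-\Delta_h u_h^n))$ by $(\nabla\cdot(b_1 u_h^j),\Delta_h u_h^n)$. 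Together with the estimate \eqref{16-10-22-5}, this yields the clean uniform bound $|\bar B(t_n,t_j,u_h^j,-\Delta_h u_h^n)| \lesssim \|u_h^j\|_1\,\|\Delta_h u_h^n\|$.

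After Young's inequality absorbs $\|\Delta_h u_h^n\|^2$ into the coercive left-hand side, I obtain
\begin{equation*}
\mathbb{D}^\alpha_t \|\nabla u_h^n\|^2 \lesssim \|f_h^n\|^2 + \sum_{j=1}^{n-1} \tau_{j+1}\,\|u_h^j\|_1^2,\qquad n \geq 2.
\end{equation*}
From here I let $m$ denote the index achieving $\max_{1\le n\le N}\|\nabla u_h^n\|$, unfold $\mathbb{D}^\alpha_t$ through the discrete kernels $k_{m,j}$ as in \eqref{approximation of Caputo Derivative}, and repeat the discrete-Gr\"onwall argument of \eqref{hg-1}--\eqref{df}, using Propositions \ref{5-11-22-2} and \ref{7-8-1} to control the kernel sums. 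The memory sum is handled by splitting $\tau_{j+1}\|u_h^j\|_1^2 \lesssim \tau_{j+1}\|u_h^j\|^2 + C_\delta\, \tau_j^{1-\alpha}(\tau_j^{\alpha/2}\|\nabla u_h^j\|)^2$: the first summand is directly controlled by \eqref{numerical bound}, the weighted factor $\tau_j^{\alpha/2}\|\nabla u_h^j\|$ in the second is also controlled by \eqref{numerical bound}, and $\sum_j \tau_j^{1-\alpha}$ is uniformly bounded in $N$ by the graded-mesh property \eqref{am1}. This produces \eqref{grad bound}.

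The main technical obstacle is the $b_1$ convective piece of the memory term: a naive Cauchy--Schwarz in $(b_1 u_h^j, \nabla(-\Delta_h u_h^n))$ would produce $\|\nabla \Delta_h u_h^n\|$ and, via an inverse inequality, force an $h^{-1}$ penalty that would destroy the bound. The essential maneuver is to recognize that elementwise integration by parts is valid on $X_h$ (interior boundary contributions cancel by continuity of $u_h^j,v_h$ and vanishing of $v_h$ on $\partial\Omega$) and replaces that factor by $\|\nabla\cdot(b_1 u_h^j)\| \lesssim \|u_h^j\|_1$, leaving the argument completely $h$-independent. A secondary delicate point is the careful packaging of the memory sum against the weighted $H^1$ norm \eqref{new norm}, which is precisely the motivation for introducing that norm in the a priori analysis.
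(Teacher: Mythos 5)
Your route is the paper's route: test \eqref{SU1}--\eqref{SUM1} with $v_h=-\Delta_h u_h^n$, rewrite the memory term through $\Delta_h^{b_2}$ and integration by parts on the $b_1$ piece so that no derivative falls on the test function, control the history sum through the already-proved weighted bound \eqref{numerical bound}, and finish by unfolding the kernels at a maximizing index and invoking the discrete Gr\"onwall argument; the paper does exactly this, except that it never absorbs $\|\Delta_h u_h^n\|^2$ by Young but instead carries the second weighted norm $\||u_h^n\||_1=\|\nabla u_h^n\|+\tau_n^{\alpha/2}\|\Delta_h u_h^n\|$ of \eqref{18-10-22-6-1-1} through the estimate.

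There is, however, a concrete error in your final accounting. The claim that $\sum_j \tau_j^{1-\alpha}$ is uniformly bounded in $N$ is false: already on the uniform mesh ($\delta=1$) one has $\sum_{j=1}^{N}\tau_j^{1-\alpha}=T^{1-\alpha}N^{\alpha}$, and on the graded mesh \eqref{SU4} the estimate \eqref{am1} gives $\sum_j\tau_j^{1-\alpha}\lesssim N^{-\delta(1-\alpha)}\sum_j j^{(\delta-1)(1-\alpha)}\lesssim N^{\alpha}$ as well, so as written your bound on $\max_n\|\nabla u_h^n\|$ degrades by a factor $N^{\alpha/2}$ and \eqref{grad bound} does not follow. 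What saves the argument (and is precisely why the paper keeps $\tau_m^{\alpha/2}\|\Delta_h u_h^m\|$ inside $\||\cdot\||_1$) is that the history sum never appears bare: after dividing by $k_{m,m}$ the inequality carries the prefactor $\tau_m^{\alpha}$ (equivalently $\tau_m^{\alpha/2}$ against $\tau_m^{\alpha/2}\|\Delta_h u_h^m\|$), and it is the weighted quantities $\tau_m^{\alpha/2}\sum_j\tau_{j+1}\tau_j^{-\alpha/2}\lesssim 1+T$ of Proposition \ref{7-8-1}, or in your variant $\tau_m^{\alpha}\sum_j\tau_j^{1-\alpha}\lesssim T$ since $\tau_m\lesssim TN^{-1}$, that are bounded. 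So the fix is simply to retain that prefactor rather than assert boundedness of the bare sum. A related caution: once you have absorbed $\|\Delta_h u_h^n\|^2$ completely by Young, the maximal-index step cannot conclude by absorbing the kernel sum, because $\sum_{j=1}^{m}(k_{m,j}-k_{m,j-1})\|\nabla u_h^{j-1}\|^2\le k_{m,m}\|\nabla u_h^m\|^2$ with $\tau_m^{\alpha}k_{m,m}=1/\Gamma(2-\alpha)\ge 1$, which makes the inequality vacuous; you must genuinely run the Gr\"onwall recursion of \eqref{hg-1}--\eqref{hg-2} (or keep the coercive $\tau_m^{\alpha}\|\Delta_h u_h^m\|^2$ term as the paper does), not merely bound each history norm by the maximum.
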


\begin{proof} 
Consider the case $n=1$ from the numerical scheme \eqref{SU1}-\eqref{SUM3}, we have
\begin{equation}\label{18-10-22-5}
     \left(\mathbb{D}^{\alpha}_{t}u_{h}^{1},v_{h}\right)+M\left(\|\nabla u_{h}^{1}\|^{2}\right)\left(\nabla u_{h}^{1},\nabla v_{h}\right)=\left(f_{h}^{1},v_{h}\right)+\tau_{1}B\left(t_{1},t_{1},u_{h}^{1},v_{h}\right).
    \end{equation}
    Apply  definitions of discrete Laplacian operators $\Delta_{h}$ \eqref{discrete laplace} and $\Delta_{h}^{b_{2}}$ \eqref{discrete laplace-1} to rewrite the equation \eqref{18-10-22-5} as 
    \begin{equation}\label{18-10-22-6}
    \begin{aligned}
     \left(\mathbb{D}^{\alpha}_{t}u_{h}^{1},v_{h}\right)&+M\left(\|\nabla u_{h}^{1}\|^{2}\right)\left(-\Delta_{h} u_{h}^{1},v_{h}\right)\\
     &=\left(f_{h}^{1},v_{h}\right)+\tau_{1}(-\Delta_{h}^{b_{2}}~u_{h}^{1},v_{h})+\tau_{1}(\nabla \cdot(b_{1}(x,t_{1},t_{1})u_{h}^{1}),v_{h})\\
     &+\tau_{1}(b_{0}(x,t_{1},t_{1})u_{h}^{1},v_{h}).
     \end{aligned}
    \end{equation}
Set $v_{h}=-\Delta_{h} u_{h}^{1}$ in \eqref{18-10-22-6} and apply Cauchy-Schwarz inequality   to obtain
\begin{equation}
\begin{aligned}
    \|\nabla u_{h}^{1}\|^{2}+\tau_{1}^{\alpha}\|\Delta_{h} u_{h}^{1}\|^{2}&\lesssim \|\nabla u_{h}^{0}\|\|\nabla u_{h}^{1}\|+\tau_{1}^{\alpha}\| f_{h}^{1}\|\|\Delta_{h} u_{h}^{1}\|\\
    &+\tau_{1}^{\alpha}\tau_{1}(\|\Delta_{h}^{b_{2}} u_{h}^{1}\|+\|\nabla u_{h}^{1}\|+\|u_{h}^{1}\|)\|\Delta_{h} u_{h}^{1}\|.
    \end{aligned}
\end{equation}
Similar to the estimate \eqref{16-10-22-5}, one have $\|\Delta_{h}^{b_{2}} u_{h}^{1}\| \lesssim \|\nabla u_{h}^{1}\|$ applying which we reach at 
\begin{equation}\label{18-10-22-6-1}
\begin{aligned}
    \|\nabla u_{h}^{1}\|^{2}+\tau_{1}^{\alpha}\|\Delta_{h} u_{h}^{1}\|^{2}&\lesssim \|\nabla u_{h}^{0}\|\|\nabla u_{h}^{1}\|+\tau_{1}^{\alpha}\| f_{h}^{1}\|\|\Delta_{h} u_{h}^{1}\|\\
    &+\tau_{1}^{\alpha}\tau_{1}(\|\nabla u_{h}^{1}\|+\|u_{h}^{1}\|)\|\Delta_{h} u_{h}^{1}\|.
    \end{aligned}
\end{equation}
Parallel to the weighted $H^{1}(\Omega)$ norm, we define a new norm as 
\begin{equation}\label{18-10-22-6-1-1}
   \||u_{h}^{n}\||_{1}:=\|\nabla u_{h}^{n}\|+\tau_{n}^{\alpha/2}\|\Delta_{h} u_{h}^{n}\|~\text{for}~1\leq n \leq N,
\end{equation}
and reduce the equation \eqref{18-10-22-6-1} into 
\begin{equation}\label{18-10-22-7}
\begin{aligned}
    \||u_{h}^{1}\||_{1}^{2}&\lesssim \left(\|\nabla u_{h}^{0}\|+\tau_{1}^{\alpha/2}\| f_{h}^{1}\|+\tau_{1}^{\alpha/2}\tau_{1}\|\nabla u_{h}^{1}\|+\tau_{1}^{\alpha/2}\tau_{1}\|u_{h}^{1}\|\right)\||u_{h}^{1}\||_{1}.
    \end{aligned}
\end{equation}
Cancellation of $\||u_{h}^{1}\||_{1}$ from both sides of \eqref{18-10-22-7} yields 
\begin{equation}\label{18-10-22-8}
\begin{aligned}
    \||u_{h}^{1}\||_{1}&\lesssim \left(\|\nabla u_{h}^{0}\|+\tau_{1}^{\alpha/2}\| f_{h}^{1}\|+\tau_{1}^{\alpha/2}\tau_{1}\|\nabla u_{h}^{1}\|+\tau_{1}^{\alpha/2}\tau_{1}\|u_{h}^{1}\|\right).
    \end{aligned}
\end{equation}
Take sufficiently small $\tau_{1}$ and apply estimate  \eqref {numerical bound} to conclude 
\begin{equation*}
    \|\nabla u_{h}^{1}\| \lesssim \|\nabla u_{h}^{0}\|+\| f_{h}^{1}\|+\||u_{h}^{1}\||  \lesssim \|\nabla u_{h}^{0}\|+ \| f_{h}^{1}\| \lesssim \|\nabla u_{0}\|+\|f^{1}\|.
\end{equation*}
If 
 \begin{equation}\label{18-10-22-2-1}
     \max_{1\leq n \leq N}\||u_{h}^{n}\||_{1}=\||u_{h}^{1}\||_{1},
 \end{equation}
 then proof  of estimate \eqref{grad bound} follows immediately. Otherwise, 
  \begin{equation}\label{18-10-22-3-1}
     \max_{1\leq n \leq N}\||u_{h}^{n}\||_{1}=\||u_{h}^{m}\||_{1}~\text{for some}~m~\in~\{2,3,4,\dots,N\}.
 \end{equation}
 \noindent Now, consider the case  $n\geq 2$ in numerical scheme \eqref{SU1}-\eqref{SUM3}  
 \begin{equation}\label{18-10-22-4-1}
    \left(\mathbb{D}^{\alpha}_{t}u_{h}^{n},v_{h}\right)+M\left(\|\nabla \tilde{u}_{h}^{n-1}\|^{2}\right)\left(\nabla u_{h}^{n},\nabla v_{h}\right)=\left(f_{h}^{n},v_{h}\right)+\sum_{j=1}^{n-1}\tau_{j+1}\bar{B}\left(t_{n},t_{j},u_{h}^{j},v_{h}\right).
    \end{equation}
    Again using definitions of discrete Laplacian operators \eqref{discrete laplace} and \eqref{discrete laplace-1}, we convert the equation \eqref{18-10-22-4-1} into 
    \begin{equation}\label{18-10-22-4-2}
    \begin{aligned}
    \left(\mathbb{D}^{\alpha}_{t}u_{h}^{n},v_{h}\right)&+M\left(\|\nabla \tilde{u}_{h}^{n-1}\|^{2}\right)\left(-\Delta_{h} u_{h}^{n}, v_{h}\right)\\
    &\lesssim \left(f_{h}^{n},v_{h}\right)+\sum_{j=1}^{n-1}\tau_{j+1}(-\Delta_{h}^{\bar{b}_{2}}~u_{h}^{j},v_{h})+\sum_{j=1}^{n-1}\tau_{j+1}\left(\nabla \cdot (\bar{b}_{1}(x,t_{n},t_{j})u_{h}^{j}),v_{h}\right)\\
    &+\sum_{j=1}^{n-1}\tau_{j+1}(\bar{b}_{0}(x,t_{n},t_{j})u_{h}^{j},v_{h}),
    \end{aligned}
    \end{equation}
    here $\bar{b}_{2}, \bar{b}_{1},$ and $\bar{b}_{0}$ has the same meaning as $\bar{B}$ in  \eqref{S9}. Put $v_{h}=-\Delta_{h}u_{h}^{n}$ in \eqref{18-10-22-4-2} to get 
     \begin{equation}\label{18-10-22-4-3}
    \begin{aligned}
   \frac{1}{2}\mathbb{D}^{\alpha}_{t}\|\nabla u_{h}^{n}\|^{2}+\|\Delta_{h} u_{h}^{n}\|^{2}
    &\lesssim\|f_{h}^{n}\|\|\Delta_{h}u_{h}^{n}\|+\sum_{j=1}^{n-1}\tau_{j+1}\|\Delta_{h}^{\bar{b}_{2}}~u_{h}^{j}\|\|\Delta_{h}u_{h}^{n}\|\\
    &+\sum_{j=1}^{n-1}\tau_{j+1}\|\nabla u_{h}^{j}\| \|\Delta_{h}u_{h}^{n}\| +\sum_{j=1}^{n-1}\tau_{j+1}\|u_{h}^{j}\| \|\Delta_{h}u_{h}^{n}\|.
    \end{aligned}
    \end{equation}
 Analogous  to the estimate \eqref{16-10-22-5}, one have $\|\Delta_{h}^{\bar{b}_{2}} u_{h}^{j}\| \lesssim \|\nabla u_{h}^{j}\|$ which leads to 
  \begin{equation}\label{18-10-22-4-4}
    \begin{aligned}
   \frac{1}{2}\mathbb{D}^{\alpha}_{t}\|\nabla u_{h}^{n}\|^{2}+\|\Delta_{h} u_{h}^{n}\|^{2}
    &\lesssim\left(\|f_{h}^{n}\|+\sum_{j=1}^{n-1}\tau_{j+1}\|\nabla u_{h}^{j}\|\right) \|\Delta_{h}u_{h}^{n}\|.
    \end{aligned}
    \end{equation}
    Since the equation \eqref{18-10-22-4-4} is true for all $n\geq 2$, thus it is also true for $m \geq 2$ such that \eqref{18-10-22-3-1} holds. Hence
     \begin{equation}\label{18-10-22-4-5}
    \begin{aligned}
  \|\nabla u_{h}^{m}\|^{2}+\tau_{m}^{\alpha}\|\Delta_{h} u_{h}^{m}\|^{2}
    &\lesssim \tau_{m}^{\alpha}\left(\|f_{h}^{m}\|+\sum_{j=1}^{m-1}\tau_{j+1}\|\nabla u_{h}^{j}\|\right) \|\Delta_{h}u_{h}^{m}\|\\
    &+ \tau_{m}^{\alpha}\sum_{j=0}^{m-1}(k_{m,j+1}-k_{m,j})\|\nabla u_{h}^{j}\|^{2}.
    \end{aligned}
    \end{equation}
    Employ the definition of $\||u_{h}^{m}\||_{1}$ \eqref{18-10-22-6-1-1}  in \eqref{18-10-22-4-5} to arrive at 
     \begin{equation}\label{18-10-22-4-6}
    \begin{aligned}
  \|| u_{h}^{m}\||_{1}^{2}
    &\lesssim \tau_{m}^{\alpha/2}\|f_{h}^{m}\|  \|| u_{h}^{m}\||_{1} +\tau_{m}^{\alpha/2}\sum_{j=1}^{m-1}\tau_{j+1} \tau_{j}^{-\alpha/2} \tau_{j}^{\alpha/2}\|\nabla u_{h}^{j}\| \||u_{h}^{m}\||_{1}\\
    &+ \tau_{m}^{\alpha}\sum_{j=0}^{m-1}(k_{m,j+1}-k_{m,j})\|| u_{h}^{j}\||_{1} \|| u_{h}^{m}\||_{1}.
    \end{aligned}
    \end{equation}
    Cancellation of $\|| u_{h}^{m}\||_{1}$  and discrete Gr\"{o}nwall's inequality reduce \eqref{18-10-22-4-6} to 
    \begin{equation}\label{18-10-22-4-61}
    \begin{aligned}
  \|| u_{h}^{m}\||_{1}
    \lesssim \tau_{m}^{\alpha/2}\|f_{h}^{m}\|+\left(\max_{1\leq j \leq N}\||u_{h}^{j}\||\right)\tau_{m}^{\alpha/2}\sum_{j=1}^{m-1}\tau_{j+1} \tau_{j}^{-\alpha/2}.
    \end{aligned}
    \end{equation}
    Finally, estimates \eqref{numerical bound} and \eqref{weight estimate} imply
    \begin{equation}\label{18-10-22-4-7}
    \begin{aligned}
  \|| u_{h}^{m}\||_{1}
    \lesssim  \|\nabla u_{0}\| + \max_{1\leq n \leq N}\|f^{n}\|.
    \end{aligned}
    \end{equation}

\end{proof}
\subsection{Existence and uniqueness of numerical solution }
\noindent In this subsection, we state an application of Br\"{o}uwer fixed point theorem and as a consequence we prove the existence and uniqueness of numerical solution of the  scheme \eqref{SU1}-\eqref{SUM3}.  
\begin{thm} \cite{thomee2007galerkin}
If $\mathbb{H}$ is a    finite dimensional Hilbert space and  $G:\mathbb{H}\rightarrow \mathbb{H}$ be a continuous map such that $\left(G(\hat{w}),\hat{w}\right)>0$  for all $\hat{w}$ in $\mathbb{H}$ with $\|\hat{w}\|=r,~r>0$ then there exists a $\tilde{w}$ in $\mathbb{H}$ such that $G(\tilde{w})=0$ and $\|\tilde{w}\|\leq r.$  
\end{thm}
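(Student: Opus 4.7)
The plan is to argue by contradiction using Brouwer's fixed point theorem in the finite-dimensional space $\mathbb{H}$. Suppose, to the contrary, that $G(w)\neq 0$ for every $w$ in the closed ball $\bar{B}_{r}:=\{w\in\mathbb{H}:\|w\|\leq r\}$. Since $\mathbb{H}$ is finite-dimensional, $\bar{B}_{r}$ is compact and homeomorphic to a closed Euclidean ball, so Brouwer's theorem applies to any continuous self-map of $\bar{B}_{r}$.

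Under the contradiction hypothesis, $\|G(\cdot)\|$ is strictly positive on $\bar{B}_{r}$, so the map $F:\bar{B}_{r}\to\bar{B}_{r}$ defined by $F(w):=-r\,G(w)/\|G(w)\|$ is continuous and well-defined. Moreover $\|F(w)\|=r$ for every $w\in\bar{B}_{r}$, so $F$ indeed sends $\bar{B}_{r}$ into itself. Brouwer's theorem then produces a fixed point $w^{\star}\in\bar{B}_{r}$ with $F(w^{\star})=w^{\star}$, and the identity $\|F(\cdot)\|=r$ forces $\|w^{\star}\|=r$.

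Testing the inner product hypothesis at $w^{\star}$ yields
\[
(G(w^{\star}),w^{\star}) \;=\; \bigl(G(w^{\star}),\,-r\,G(w^{\star})/\|G(w^{\star})\|\bigr) \;=\; -r\,\|G(w^{\star})\| \;<\; 0,
\]
which directly contradicts the assumption that $(G(\hat{w}),\hat{w})>0$ on the sphere $\|\hat{w}\|=r$. Hence some $\tilde{w}\in\bar{B}_{r}$ must satisfy $G(\tilde{w})=0$, and this $\tilde{w}$ automatically obeys $\|\tilde{w}\|\leq r$.

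The only delicate step is the legitimacy of the normalization in the definition of $F$; this is precisely what the contradiction hypothesis secures. The finite-dimensionality of $\mathbb{H}$ is essential for invoking Brouwer (no Schauder-type extension is required), and the strict inequality in the hypothesis is what produces the strict inequality $-r\|G(w^{\star})\|<0$ needed to close the argument.
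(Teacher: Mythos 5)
Your proof is correct, and it is exactly the classical argument behind this result: the paper itself does not prove the statement but simply cites it from Thom\'ee's book, where the proof is precisely your construction, namely assuming $G$ has no zero in the closed ball, applying Brouwer's fixed point theorem to the map $w\mapsto -r\,G(w)/\|G(w)\|$, and contradicting the positivity hypothesis on the sphere $\|\hat{w}\|=r$. All steps (well-definedness of the normalized map under the contradiction hypothesis, the fixed point lying on the sphere, and the strict sign contradiction) are handled correctly, so there is nothing to add.
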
 

\begin{thm} Under the assumptions $(A1)$-$(A2)$, there exists a unique  solution to the problem \eqref{SU1}-\eqref{SUM3}.
\end{thm}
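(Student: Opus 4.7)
The plan is to proceed by induction on $n\in\{1,2,\dots,N\}$, with $n=1$ being the only step that genuinely requires Brouwer's theorem: for $n\geq 2$, the linearization \eqref{linearization} evaluates $M$ at the extrapolation $\tilde{u}_h^{n-1}$, which is already known from the induction hypothesis, so \eqref{SU1} reduces to a linear problem in $u_h^{n}$. For existence at $n=1$, using $\mathbb{D}^{\alpha}_{t}u_h^{1}=k_{1,1}(u_h^{1}-u_h^{0})$, I would define $G:X_h\to X_h$ by
\begin{equation*}
(G(w),v_h):=k_{1,1}(w-u_h^{0},v_h)+M(\|\nabla w\|^{2})(\nabla w,\nabla v_h)-(f_h^{1},v_h)-\tau_1 B(t_1,t_1,w,v_h),\quad\forall\,v_h\in X_h.
\end{equation*}
$G$ is continuous since $M$ is Lipschitz and the remaining terms are (bi)linear in $w$. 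Testing with $v_h=w$, using (A2) and the continuity bound \eqref{2.3}, and choosing $\tau_1$ small enough that $m_0-\tau_1 B_0\geq 0$ (for instance $\tau_1=m_0/(2B_0)$, as in the a priori bound), one obtains $(G(w),w)\geq k_{1,1}\|w\|^{2}-\bigl(k_{1,1}\|u_h^{0}\|+\|f_h^{1}\|\bigr)\|w\|$, which is strictly positive as soon as $\|w\|=r$ with $r>\|u_h^{0}\|+\|f_h^{1}\|/k_{1,1}$. The cited consequence of Brouwer's theorem then yields a zero $u_h^{1}\in X_h$ of $G$, i.e., a solution of \eqref{SUM1}.

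For uniqueness at $n=1$, given two solutions $u_1,u_2$, I would set $w=u_1-u_2$, subtract the equations, test with $v_h=w$, and split the Kirchhoff difference as
\begin{equation*}
M(\|\nabla u_1\|^{2})(\nabla u_1,\nabla w)-M(\|\nabla u_2\|^{2})(\nabla u_2,\nabla w)=M(\|\nabla u_1\|^{2})\|\nabla w\|^{2}+\bigl(M(\|\nabla u_1\|^{2})-M(\|\nabla u_2\|^{2})\bigr)(\nabla u_2,\nabla w).
\end{equation*}
The Lipschitz bound on $M$, the continuity of the memory operator \eqref{2.3}, and the a priori gradient bound $\|\nabla u_j\|\lesssim K$ supplied by Theorem \ref{12345-2} reduce this to an estimate of the form $k_{1,1}\|w\|^{2}+(m_0-C L_M K^{2}-\tau_1 B_0)\|\nabla w\|^{2}\leq 0$; by the smallness condition $(m_0-4L_M K^{2})>0$ built into (A2), together with the same choice of $\tau_1$ as above, the bracket is strictly positive, forcing $w=0$. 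For the inductive step $n\geq 2$, once $u_h^{0},\dots,u_h^{n-1}$ are known, \eqref{SU1} is linear in $u_h^{n}$, and its bilinear form $(u,v)\mapsto k_{n,n}(u,v)+M(\|\nabla\tilde{u}_h^{n-1}\|^{2})(\nabla u,\nabla v)$ is symmetric and coercive on $X_h$ because $k_{n,n}>0$ and $M(\|\nabla\tilde{u}_h^{n-1}\|^{2})\geq m_0$; existence and uniqueness of $u_h^{n}$ then follow from the Lax--Milgram lemma, equivalently from positive definiteness of the corresponding stiffness matrix.

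The main obstacle is the $n=1$ step, where the Kirchhoff coupling genuinely nonlinearizes the scheme: Brouwer handles existence, but uniqueness hinges on already having the a priori gradient bound of Theorem \ref{12345-2} in conjunction with the smallness condition $(m_0-4L_M K^{2})>0$ from (A2), and on selecting $\tau_1$ compatibly with both. From $n=2$ onward the problem is linear and the argument is routine.
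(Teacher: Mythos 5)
Your proposal is correct and follows essentially the same route as the paper: for $n\geq 2$ the scheme is linear in $u_h^{n}$ (the extrapolation $\tilde{u}_h^{n-1}$ and the quadrature of the memory term use only previously computed values) and positive definiteness/Lax--Milgram gives well-posedness, while for $n=1$ existence comes from the same Brouwer-type map $G$ with the choice $\tau_1=m_0/(2B_0)$, and uniqueness from testing the difference equation with itself, invoking the Lipschitz continuity of $M$, the gradient bound of Theorem \ref{12345-2}, and the condition $m_0-4L_MK^{2}>0$ in (A2). No substantive difference from the paper's argument.
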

\begin{proof} For $n\geq 2$, the scheme \eqref{SU1}-\eqref{SUM3} is linear having positive definite coefficient matrix. As a result, existence and uniqueness of solution $u_{h}^{n}~(n\geq 2)$ follow immediately. For the existence of $u_{h}^{1}$ in \eqref{SUM1}, put the value of $\mathbb{D}^{\alpha}_{t}u_{h}^{1}$ from \eqref{approximation of Caputo Derivative} in \eqref{SUM1} to get 
\begin{equation}\label{29-11-22-1}
\begin{aligned}
 \left(\frac{\tau_{1}^{-\alpha}}{\gamma_{0}}\left(u_{h}^{1}-u_{h}^{0}\right),v_{h}\right)+M\left(\|\nabla u_{h}^{1}\|^{2}\right)\left(\nabla u_{h}^{1},\nabla v_{h}\right)&=\left(f_{h}^{1},v_{h}\right)+\tau_{1}B\left(t_{1},t_{1},u_{h}^{1},v_{h}\right).
 \end{aligned}
 \end{equation}
 In the view of \eqref{29-11-22-1},  define a map $G:X_{h}\rightarrow X_{h}$ such that 
\begin{equation*}
\begin{aligned}
\left(G\left(u_{h}^{1}\right),v_{h}\right)&=\left(u_{h}^{1},v_{h}\right)-\left(u_{h}^{0},v_{h}\right)+\tau_{1}^{\alpha}\gamma_{0}M\left(\|\nabla u_{h}^{1}\|^{2}\right)\left(\nabla u_{h}^{1},\nabla v_{h}\right)\\
&-\tau_{1}^{\alpha}\gamma_{0}\left(f_{h}^{1},v_{h}\right)-\tau_{1}\tau_{1}^{\alpha}\gamma_{0}B\left(t_{1},t_{1},u_{h}^{1},v_{h}\right).
\end{aligned}
\end{equation*}
Apply positivity of diffusion coefficient (A2), continuity of memory operator \eqref{2.3}, and Cauchy-Schwarz inequality to obtain
\begin{equation*}
\begin{aligned}
\left(G\left(u_{h}^{1}\right),u_{h}^{1}\right)&\geq \|u_{h}^{1}\|^{2}-\|u_{h}^{0}\|\|u_{h}^{1}\|+\tau_{1}^{\alpha}\gamma_{0}m_{0}\|\nabla u_{h}^{1}\|^{2}-\tau_{1}^{\alpha}\gamma_{0}\|f_{h}^{1}\|\| u_{h}^{1}\|\\
&-\tau_{1}^{1+\alpha}\gamma_{0}B_{0}\|\nabla u_{h}^{1}\|^{2}.
\end{aligned}
\end{equation*}
So 
\begin{equation*}
\begin{aligned}
\left(G\left(u_{h}^{1}\right),u_{h}^{1}\right)&\geq \|u_{h}^{1}\|\left(\|u_{h}^{1}\|-\|u_{h}^{0}\|-\tau_{1}^{\alpha}\gamma_{0}\|f_{h}^{1}\|\right)+\tau_{1}^{\alpha}\gamma_{0}(m_{0}-\tau_{1}B_{0})\|\nabla u_{h}^{1}\|^{2}.
\end{aligned}
\end{equation*}
For $\tau_{1}=\frac{m_{0}}{2B_{0}}$, choose $\|u_{h}^{1}\|$ sufficiently large such that $\left(G\left(u_{h}^{1}\right),u_{h}^{1}\right)>0$ and  $G$ is a continuous map because of continuity of the diffusion coefficient $M$.  So the above stated Br\"{o}uwer fixed point theorem ensures the existence of $u_{h}^{1}$.
 \par Let $X_{h}^{1}$ and $Y_{h}^{1}$ be solutions of the equation \eqref{SUM1}, then $Z_{h}^{1}=X_{h}^{1}-Y_{h}^{1}$ satisfies the following equation for all $v_{h} \in X_{h}$
 \begin{equation}\label{17-11-22-1}
\begin{aligned}
     &\left(Z_{h}^{1},v_{h}\right)+\tau_{1}^{\alpha}\gamma_{0}M\left(\|\nabla X_{h}^{1}\|^{2}\right)\left(\nabla Z_{h}^{1},\nabla v_{h}\right)\\
     &=\tau_{1}^{\alpha}\gamma_{0}\left(M\left(\|\nabla Y_{h}^{1}\|^{2}\right)-M\left(\|\nabla X_{h}^{1}\|^{2}\right)\right)\left(\nabla Y_{h}^{1},\nabla v_{h}\right)+\tau_{1}^{1+\alpha}\gamma_{0}B\left(t_{1},t_{1},Z_{h}^{1},v_{h}\right).
     \end{aligned}
 \end{equation}
 Set $v_{h}=Z_{h}^{1}$ in \eqref{17-11-22-1} and use positivity, Lipschitz continuity of $M$ (A2) along with continuity of the  memory operator \eqref{2.3} and Cauchy-Schwarz inequality to get
  \begin{equation}\label{17-11-22-1-1}
\begin{aligned}
     \|Z_{h}^{1}\|^{2}+\tau_{1}^{\alpha}m_{0}\|\nabla Z_{h}^{1}\|^{2}\lesssim \tau_{1}^{\alpha}L_{M}(\|\nabla X_{h}^{1}\|+\|\nabla Y_{h}^{1}\|)\|\nabla Z_{h}^{1}\|^{2}\|\nabla Y_{h}^{1}\|+\tau_{1}^{1+\alpha}B_{0}\|\nabla Z_{h}^{1}\|^{2}.
     \end{aligned}
 \end{equation}
 Employing the gradient bound \eqref{grad bound} in \eqref{17-11-22-1-1}, we obtain
  \begin{equation*}
\begin{aligned}
     \|Z_{h}^{1}\|^{2}&+\tau_{1}^{\alpha}\left(m_{0}-B_{0}\tau_{1}-2L_{M}K^{2}\right)\|\nabla Z_{h}^{1}\|^{2}\leq 0.
     \end{aligned}
 \end{equation*}
Take $ \tau_{1}=\frac{m_{0}}{2B_{0}}$ and  apply (A2) to deduce
 \begin{equation*}
\begin{aligned}
     \|Z_{h}^{1}\|^{2}+\tau_{1}^{\alpha}\|\nabla Z_{h}^{1}\|^{2}
     &\leq 0.
     \end{aligned}
 \end{equation*}
Thus uniqueness follows.
 \end{proof}
 \subsection{Local truncation errors}
 In this subsection, we derive local truncation errors that arise due to the approximation of Caputo fractional derivative, nonlinear diffusion coefficient, and the memory term. 
 \begin{lem}\textbf{(Approximation of the Caputo fractional derivative \cite{kopteva2019error})}\label{6-5-22-5}
The truncation error $\mathbb{T}^{n}$ at any time $t_{n}\in [0,T]$ arising due to the L1 approximation of Caputo fractional derivative \eqref{p3} satisfies the following estimate 
\begin{equation}\label{p1}
    \left|\mathbb{T}^{n}\right|\lesssim t_{n}^{-\alpha}\left\{\left(\frac{\tau_{1}}{t_{n}}\right)\psi^{1}+\max_{j=2,\dots,n}\psi^{j}\right\},~~n=1,2,3,\dots,N,
\end{equation}
where 
\begin{equation*}
    \psi^{1}=\tau^{\alpha}_{1}\sup_{s \in (0,t_{1})}\left(s^{1-\alpha}\left|\bar{\partial}_{t}u^{1}-\frac{\partial u}{\partial s}(s)\right|\right),~ \text{with}~ \bar{\partial}_{t}u^{1}=\frac{u(t_{1})-u(t_{0})}{\tau_{1}},
\end{equation*}
 and 
\begin{equation*}
   \psi^{j}=\tau_{j}^{2-\alpha}t_{j}^{\alpha}\sup_{s\in(t_{j-1},t_{j})}\left|\frac{\partial^{2}u}{\partial s^{2}}(s)\right|~~\text{for}~~j \geq 2.
\end{equation*}
\end{lem}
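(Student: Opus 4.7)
The plan is to start from the definition
\[
\mathbb{T}^{n} \;=\; {}^{C}D_{t}^{\alpha}u^{n}-\mathbb{D}^{\alpha}_{t}u^{n}
\;=\;\frac{1}{\Gamma(1-\alpha)}\sum_{j=1}^{n}\int_{t_{j-1}}^{t_{j}}(t_{n}-s)^{-\alpha}\!\left[\tfrac{\partial u}{\partial s}(s)-\bar{\partial}_{t}u^{j}\right]ds,
\]
where $\bar{\partial}_{t}u^{j}:=(u^{j}-u^{j-1})/\tau_{j}$, and to estimate the $j=1$ piece and the $j\geq 2$ pieces by two different integration-by-parts arguments. Throughout, the graded-mesh relation $t_{n}\leq 2^{\delta}t_{n-1}$ will be used to turn ratios $t_{j}/t_{n}$ into constants.

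For the $j\geq 2$ interval, let $\Pi_{j}u$ denote the linear interpolant of $u$ on $[t_{j-1},t_{j}]$, so that $\bar{\partial}_{t}u^{j}\equiv (\Pi_{j}u)'$ on this interval and $H_{j}:=u-\Pi_{j}u$ vanishes at both endpoints. Integration by parts gives
\[
\int_{t_{j-1}}^{t_{j}}(t_{n}-s)^{-\alpha}H_{j}'(s)\,ds \;=\; -\alpha\int_{t_{j-1}}^{t_{j}}(t_{n}-s)^{-\alpha-1}H_{j}(s)\,ds,
\]
and the classical interpolation bound $|H_{j}(s)|\leq (s-t_{j-1})(t_{j}-s)\sup|u''|/2$ plugs in. For $j\leq n-1$ I would pull $\sup|u''|$ out, evaluate $\int_{t_{j-1}}^{t_{j}}(t_{n}-s)^{-\alpha-1}ds$ in closed form, and then substitute $\sup|u''|=\psi^{j}\,t_{j}^{-\alpha}\tau_{j}^{-(2-\alpha)}$. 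For the singular case $j=n$, I would keep the factor $(s-t_{j-1})$ inside the integral to cancel the leading-order blow-up, arriving at a Beta integral $\int_{0}^{\tau_{n}}r\,(\tau_{n}-r)^{-\alpha-1}dr$ which is finite and bounded by $C\tau_{n}^{1-\alpha}$. After summation, each contribution is dominated by $t_{n}^{-\alpha}\max_{2\leq j\leq n}\psi^{j}$, exactly the second term in \eqref{p1}.

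The delicate step is the $j=1$ term, since $u''$ is not integrable near $0$ under \eqref{14-9-22-2}. The key idea is to exploit the mean-zero property
\[
\int_{0}^{t_{1}}\!\left[\tfrac{\partial u}{\partial s}(s)-\bar{\partial}_{t}u^{1}\right]ds \;=\; 0,
\]
and integrate by parts using the antiderivative $G(s):=\int_{s}^{t_{1}}\!\big[u'(\sigma)-\bar{\partial}_{t}u^{1}\big]\,d\sigma$, which vanishes at both $s=0$ and $s=t_{1}$. This yields
\[
\int_{0}^{t_{1}}(t_{n}-s)^{-\alpha}\!\left[u'(s)-\bar{\partial}_{t}u^{1}\right]ds \;=\; \alpha\int_{0}^{t_{1}}(t_{n}-s)^{-\alpha-1}G(s)\,ds.
\]
Using the definition of $\psi^{1}$ I would bound $|G(s)|\leq \tau_{1}^{-\alpha}\psi^{1}\int_{0}^{t_{1}}\sigma^{\alpha-1}d\sigma=\psi^{1}/\alpha$, and then, because $t_{1}/t_{n}\leq 2^{-\delta}<1$ for $n\geq 2$, evaluate
\[
\int_{0}^{t_{1}}(t_{n}-s)^{-\alpha-1}ds \;=\; \tfrac{1}{\alpha}\bigl[(t_{n}-t_{1})^{-\alpha}-t_{n}^{-\alpha}\bigr] \;\leq\; C\,t_{n}^{-\alpha}\,\tfrac{t_{1}}{t_{n}},
\]
by a first-order Taylor expansion of $(1-x)^{-\alpha}-1$. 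Since $t_{1}=\tau_{1}$, this produces precisely the factor $(\tau_{1}/t_{n})\psi^{1}\,t_{n}^{-\alpha}$ in \eqref{p1}; the case $n=1$ reduces directly to $t_{1}^{-\alpha}\psi^{1}$.

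The main obstacle is the $j=1$ estimate: one must extract an extra factor $\tau_{1}/t_{n}$ beyond what a naive $L^{\infty}$ bound on $(t_{n}-s)^{-\alpha}$ would give, and this is why the mean-zero integration-by-parts trick is essential — without it one only obtains $t_{n}^{-\alpha}\psi^{1}$, which would destroy the optimal $O(N^{-(2-\alpha)})$ rate when $\psi^{1}$ is summed against the subsequent Grönwall-type argument. The rest of the estimates are routine once the graded-mesh comparability $(t_{n}-t_{j})\asymp t_{n}$ for $j\leq n/2$ and $(t_{n}-t_{j})^{-\alpha}-(t_{n}-t_{j-1})^{-\alpha}\lesssim \alpha\tau_{j}(t_{n}-t_{j})^{-\alpha-1}$ are invoked.
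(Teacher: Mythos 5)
The paper itself does not prove this lemma --- it is imported verbatim from \cite{kopteva2019error} --- so your reconstruction can only be judged on its own terms. Its overall architecture is the standard (and correct) one: split $\mathbb{T}^{n}$ interval by interval, treat $j\geq 2$ via the linear-interpolant error $H_{j}=u-\Pi_{j}u$ and integration by parts against $(t_{n}-s)^{-\alpha}$, and treat $j=1$ via the mean-zero antiderivative $G(s)=\int_{s}^{t_{1}}[u'(\sigma)-\bar{\partial}_{t}u^{1}]\,d\sigma$. Your $j=1$ argument is right and is indeed the crux: $G(0)=G(t_{1})=0$, $|G|\leq\psi^{1}/\alpha$ from the definition of $\psi^{1}$, and $(t_{n}-t_{1})^{-\alpha}-t_{n}^{-\alpha}\lesssim t_{n}^{-\alpha}(\tau_{1}/t_{n})$ for $n\geq 2$ produce exactly the first term of \eqref{p1}, with $n=1$ handled directly by the Beta integral $\int_{0}^{t_{1}}(t_{1}-s)^{-\alpha}s^{\alpha-1}ds$. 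The interior sum $2\leq j\leq n-1$ also closes, but only after the splitting you merely allude to: for $j\leq n/2$ use $(t_{n}-t_{j})\gtrsim t_{n}$ and $\sum_{j}\tau_{j}\leq t_{n}$, while for $j>n/2$ use $t_{j}\gtrsim t_{n}$, $\tau_{j}\leq\tau_{n}$ and telescoping of $(t_{n}-t_{j})^{-\alpha}-(t_{n}-t_{j-1})^{-\alpha}$; the phrase ``each contribution is dominated by $t_{n}^{-\alpha}\max_{j}\psi^{j}$'' is misleading, since the bound is a statement about the whole sum (a naive term-by-term telescoping only gives $\tau_{n}^{-\alpha}$, not $t_{n}^{-\alpha}$).

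The genuine error is in the last interval $j=n$. After integration by parts the kernel is $(t_{n}-s)^{-\alpha-1}$, and the factor of $|H_{n}(s)|\leq\tfrac12(s-t_{n-1})(t_{n}-s)\sup|u''|$ that tames this singularity is $(t_{n}-s)$, not $(s-t_{n-1})$. The integral you propose, $\int_{0}^{\tau_{n}}r\,(\tau_{n}-r)^{-\alpha-1}dr$, is divergent (near $r=\tau_{n}$ the integrand behaves like $\tau_{n}(\tau_{n}-r)^{-\alpha-1}$ and $\alpha+1>1$), so it is certainly not bounded by $C\tau_{n}^{1-\alpha}$; moreover even that bound, were it true, would give $\psi^{n}t_{n}^{-\alpha}\tau_{n}^{-1}$ after substituting $\sup|u''|=\psi^{n}t_{n}^{-\alpha}\tau_{n}^{-(2-\alpha)}$, which is larger than the target by a factor $\tau_{n}^{-1}$. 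The correct computation keeps both vanishing factors, $\int_{t_{n-1}}^{t_{n}}(s-t_{n-1})(t_{n}-s)^{-\alpha}ds=B(2,1-\alpha)\,\tau_{n}^{2-\alpha}$, which yields precisely $\psi^{n}t_{n}^{-\alpha}$; alternatively, skip integration by parts on $[t_{n-1},t_{n}]$ altogether and use $|u'(s)-\bar{\partial}_{t}u^{n}|\leq\tau_{n}\sup|u''|$ together with $\int_{t_{n-1}}^{t_{n}}(t_{n}-s)^{-\alpha}ds\lesssim\tau_{n}^{1-\alpha}$. With that single step repaired (and the interior summation written out), your argument does recover the cited estimate \eqref{p1}.
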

\begin{cor}\cite{kopteva2019error} Suppose that the solution $u$ of the problem  \eqref{nonlinear weak problem} satisfies the limited regularity \eqref{14-9-22-2} i.e. $|\partial_{t}^{l}u|\lesssim t^{\alpha-l}$ for $l=1,2$ and $\psi^{j}$ be as in Lemma \ref{6-5-22-5}. Then 
\begin{equation}\label{Sw}
    \psi^{j}\lesssim N^{-\min\{\delta \alpha,~2-\alpha\}},~~\text{for}~~j=1,2,3,\dots,N.
\end{equation}
\end{cor}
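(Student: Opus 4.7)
The plan is to split the estimate into the two cases $j=1$ and $j\geq 2$, since the formulas defining $\psi^1$ and $\psi^j$ ($j\geq 2$) are different, and handle each via the temporal regularity $|\partial_t^l u(s)|\lesssim s^{\alpha-l}$ from \eqref{14-9-22-2} together with the graded mesh properties \eqref{am1} and \eqref{29-11-22-2}.

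For the case $j=1$, I would rewrite the backward difference as $\bar{\partial}_t u^1=\tau_1^{-1}\int_0^{t_1}u_t(\sigma)\,d\sigma$ and use the regularity bound $|u_t(\sigma)|\lesssim \sigma^{\alpha-1}$. This gives $|\bar{\partial}_t u^1|\lesssim \tau_1^{-1}\int_0^{t_1}\sigma^{\alpha-1}d\sigma\lesssim \tau_1^{\alpha-1}$. Combining with $|u_t(s)|\lesssim s^{\alpha-1}$ and using $s^{1-\alpha}\leq t_1^{1-\alpha}=\tau_1^{1-\alpha}$ for $s\in(0,t_1)$, the supremum $\sup_{s\in(0,t_1)}s^{1-\alpha}|\bar{\partial}_t u^1-u_t(s)|$ is bounded by a constant. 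Therefore $\psi^1\lesssim \tau_1^\alpha$, and by the definition of the graded mesh $\tau_1=TN^{-\delta}$, so $\psi^1\lesssim T^\alpha N^{-\delta\alpha}\lesssim N^{-\min\{\delta\alpha,\,2-\alpha\}}$.

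For $j\geq 2$, the regularity $|\partial_s^2 u(s)|\lesssim s^{\alpha-2}$ and monotonicity ($\alpha-2<0$) yield $\sup_{s\in(t_{j-1},t_j)}|\partial_s^2 u(s)|\lesssim t_{j-1}^{\alpha-2}$. Then using \eqref{29-11-22-2}, namely $t_{j-1}\geq 2^{-\delta}t_j$, I transfer to $t_j$: $t_{j-1}^{\alpha-2}\lesssim t_j^{\alpha-2}$. Consequently
\begin{equation*}
\psi^j \lesssim \tau_j^{2-\alpha}\, t_j^{\alpha}\, t_j^{\alpha-2} = \tau_j^{2-\alpha}\, t_j^{2\alpha-2}.
\end{equation*}
Next I invoke \eqref{am1}, $\tau_j\lesssim N^{-1}t_j^{1-1/\delta}$, to obtain
\begin{equation*}
\psi^j \lesssim N^{-(2-\alpha)}\, t_j^{(1-1/\delta)(2-\alpha)+2\alpha-2} = N^{-(2-\alpha)}\, t_j^{\alpha-(2-\alpha)/\delta}.
\end{equation*}

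The main obstacle, and the only genuinely case-sensitive step, is controlling the remaining $t_j$-factor depending on the sign of the exponent $\alpha-(2-\alpha)/\delta$. If $\delta\alpha\geq 2-\alpha$ (the mildly or strongly graded regime), the exponent is nonnegative and $t_j^{\alpha-(2-\alpha)/\delta}\leq T^{\alpha-(2-\alpha)/\delta}$, giving $\psi^j\lesssim N^{-(2-\alpha)}=N^{-\min\{\delta\alpha,\,2-\alpha\}}$. If $\delta\alpha<2-\alpha$ (weak grading), the exponent is negative, so $t_j^{\alpha-(2-\alpha)/\delta}$ is largest when $t_j$ is smallest; since $j\geq 2$, we use $t_j\geq t_2 = 2^\delta T N^{-\delta}$ to estimate
\begin{equation*}
t_j^{\alpha-(2-\alpha)/\delta}\lesssim N^{-\delta[\alpha-(2-\alpha)/\delta]} = N^{(2-\alpha)-\delta\alpha}.
\end{equation*}
Substituting yields $\psi^j\lesssim N^{-(2-\alpha)}\cdot N^{(2-\alpha)-\delta\alpha}=N^{-\delta\alpha}=N^{-\min\{\delta\alpha,\,2-\alpha\}}$. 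Combining both regimes with the $j=1$ bound establishes \eqref{Sw} uniformly for all $j\in\{1,\dots,N\}$.
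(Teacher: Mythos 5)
Your proof is correct. Note that the paper itself gives no proof of this corollary — it is simply quoted from the cited reference \cite{kopteva2019error} — and your argument reconstructs the standard derivation used there: the $j=1$ case via the crude bound $s^{1-\alpha}\bigl(|\bar{\partial}_t u^1|+|u_t(s)|\bigr)\lesssim 1$ giving $\psi^1\lesssim \tau_1^{\alpha}\lesssim N^{-\delta\alpha}$, and the $j\geq 2$ case via $\sup_{(t_{j-1},t_j)}|\partial_s^2 u|\lesssim t_j^{\alpha-2}$ (using $t_j\leq 2^{\delta}t_{j-1}$) together with $\tau_j\lesssim N^{-1}t_j^{1-1/\delta}$, followed by the sign split on the exponent $\alpha-(2-\alpha)/\delta$, which correctly produces $N^{-(2-\alpha)}$ in the regime $\delta\alpha\geq 2-\alpha$ and $N^{-\delta\alpha}$ in the regime $\delta\alpha<2-\alpha$, i.e.\ the claimed bound $N^{-\min\{\delta\alpha,\,2-\alpha\}}$ uniformly in $j$.
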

\begin{lem}
For any  $n \in \{1,2,3,\dots,N\}$. Let $u^{n}$ be the value of $u$ at $t_{n}$ and $\tilde{u}^{n-1}$ be the approximation of $u$ defined in \eqref{linearization} at $t_{n}$ then the linearization error $\mathbb{L}^{n}$ at $t_{n}$ satisfies
\begin{equation}\label{mk}
    |\mathbb{L}^{n}|=|u^{n}-\tilde{u}^{n-1}|\lesssim N^{-\min\{\delta \alpha,~2\}}.
\end{equation}
\end{lem}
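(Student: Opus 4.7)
The plan is to recognize that $\tilde{u}^{n-1}$ is precisely the value at $t_n$ of the linear polynomial interpolating $u$ at $(t_{n-2}, u^{n-2})$ and $(t_{n-1}, u^{n-1})$, and then analyze the resulting extrapolation error by splitting into $n=2$ (where the weak singularity at $t=0$ forces a first-derivative argument) and $n \geq 3$ (where a standard second-derivative remainder works). The key identity to rearrange is
\begin{equation*}
u^{n}-\tilde{u}^{n-1}=\bigl(u^{n}-u^{n-1}\bigr)-\frac{\tau_{n}}{\tau_{n-1}}\bigl(u^{n-1}-u^{n-2}\bigr)=\int_{t_{n-1}}^{t_{n}}u'(s)\,ds-\frac{\tau_{n}}{\tau_{n-1}}\int_{t_{n-2}}^{t_{n-1}}u'(s)\,ds.
\end{equation*}

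For $n=2$ I would plug in the regularity bound $|u'(s)|\lesssim s^{\alpha-1}$ from \eqref{14-9-22-2} directly into the above identity. Using $t_{1}=TN^{-\delta}$, $t_{2}\lesssim N^{-\delta}$, and $\tau_{2}/\tau_{1}=2^{\delta}-1$, a short calculation gives $|u^{2}-\tilde{u}^{1}|\lesssim N^{-\delta\alpha}\leq N^{-\min\{\delta\alpha,2\}}$. This case has to be isolated because the interval of interest touches the singular point $t=0$ and so $u''$ is not integrable there.

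For $n\geq 3$ the interval $[t_{n-2},t_{n}]$ is bounded away from zero, so $u$ is $C^{2}$ on it, and the standard linear interpolation remainder gives
\begin{equation*}
|u^{n}-\tilde{u}^{n-1}|=\tfrac{1}{2}(t_{n}-t_{n-2})(t_{n}-t_{n-1})\,|u''(\xi)|\lesssim \tau_{n}(\tau_{n}+\tau_{n-1})\max_{s\in[t_{n-2},t_{n}]}|u''(s)|,
\end{equation*}
for some $\xi\in[t_{n-2},t_{n}]$. The M-conv property \eqref{M conv prop } collapses $\tau_{n}+\tau_{n-1}\lesssim \tau_{n}$, the doubling estimate \eqref{29-11-22-2} gives $t_{n-2}\geq c\,t_{n}$, and the regularity $|u''(s)|\lesssim s^{\alpha-2}$ then yields $\max|u''|\lesssim t_{n}^{\alpha-2}$. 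Combining with the graded-mesh bound $\tau_{n}\lesssim N^{-1}t_{n}^{1-1/\delta}$ from \eqref{am1} produces $|u^{n}-\tilde{u}^{n-1}|\lesssim N^{-2}t_{n}^{\alpha-2/\delta}$.

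A dichotomy on the sign of $\alpha-2/\delta$ finishes the argument. If $\delta\alpha\geq 2$, then $\alpha-2/\delta\geq 0$ and $t_{n}\leq T$ gives $N^{-2}$. If $\delta\alpha<2$, then $\alpha-2/\delta<0$ and the factor $t_{n}^{\alpha-2/\delta}$ is largest at $n=3$, where $t_{3}\sim N^{-\delta}$; plugging in produces $N^{-2}\cdot N^{\delta(2/\delta-\alpha)}=N^{-\delta\alpha}$. In both cases the bound is $N^{-\min\{\delta\alpha,2\}}$, matching the $n=2$ estimate. I expect the main obstacle to be bookkeeping: verifying the constants hidden in $\lesssim$ are truly uniform in $n$ and handling the case $n=2$ separately, since the formally natural Taylor-remainder approach breaks down right at the boundary of the singularity.
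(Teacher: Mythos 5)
Your proposal is correct and follows essentially the same route as the paper: for $n\ge 3$ both arguments reduce $|\mathbb{L}^{n}|$ to $\tfrac12\,\tau_{n}(\tau_{n}+\tau_{n-1})\,|u''(\xi)|$ (you via the linear-extrapolation remainder, the paper via two Taylor expansions combined with the intermediate value theorem), then invoke $|u''(t)|\lesssim t^{\alpha-2}$, $t_{n-2}\gtrsim t_{n}$ from \eqref{29-11-22-2}, and the mesh bound \eqref{am1}, and finish with the same dichotomy between $\delta\alpha$ and $2$. The only cosmetic differences are that you handle $n=2$ with the first-derivative bound $|u'(s)|\lesssim s^{\alpha-1}$ where the paper uses the integral-form Taylor remainder with $u''$, and that your appeal to \eqref{M conv prop } for $\tau_{n}+\tau_{n-1}\lesssim\tau_{n}$ should instead cite the monotonicity $\tau_{n-1}\le\tau_{n}$ of the graded steps (valid since $\delta\ge 1$) --- both points are harmless.
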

\begin{proof}
Denote $\tilde{\tau}_{n}:=\frac{\tau_{n}}{\tau_{n-1}}$ and consider $\mathbb{L}^{n}=u^{n}-\tilde{u}^{n-1}$. Then by the Taylor series expansion of $u^{n}$ around $u^{n-1}$ and $u^{n-2}$  there exist $\xi_{1} \in (t_{n-1},t_{n})$ and $\xi_{2}\in (t_{n-2},t_{n})$ such that 
\begin{equation*}
\begin{aligned}
    \mathbb{L}^{n}&=\left[(1+\tilde{\tau}_{n})\tau_{n}-\tilde{\tau}_{n}\left(\tau_{n-1}+\tau_{n}\right)\right]\frac{\partial u}{\partial t}(t_{n})+(1+\tilde{\tau}_{n})\frac{{\tau_{n}}^{2}}{2}\frac{\partial^{2}u}{\partial t^{2}}(\xi_{1})\\
    &-\tilde{\tau}_{n}\frac{\left(\tau_{n-1}+\tau_{n}\right)^{2}}{2}\frac{\partial^{2}u}{\partial t^{2}}(\xi_{2}).
    \end{aligned}
\end{equation*}
Since $\frac{\partial^{2}u}{\partial t^{2}}$ is continuous so by intermediate value theorem \cite[Theorem 6.14]{baskar2016introduction}  there exist a $\xi \in (t_{n-2},t_{n})$ such that  
\begin{equation}\label{mk-1}
\begin{aligned}
    \mathbb{L}^{n}&=\left[\frac{{\tau_{n}}^{2}\tilde{\tau}_{n}}{2}+\frac{{\tau_{n}}^{2}}{2}-\frac{\tilde{\tau}_{n}\tau_{n-1}^{2}}{2}-\frac{\tilde{\tau}_{n}\tau_{n}^{2}}{2}-\tau_{n-1}\tau_{n}\tilde{\tau}_{n}\right]\frac{\partial^{2}u}{\partial t^{2}}(\xi).
    \end{aligned}
\end{equation}
Simplification of \eqref{mk-1}  reduces to 
\begin{equation*}
\begin{aligned}
    |\mathbb{L}^{n}|\lesssim \tau_{n}^{2}\left|\frac{\partial^{2}u}{\partial t^{2}}(\xi)\right|.
    \end{aligned}
\end{equation*}
Further, regularity assumption \eqref{14-9-22-2} and relation \eqref{29-11-22-2} for $n\geq 3$ yield
\begin{equation*}
|\mathbb{L}^{n}|\lesssim \tau_{n}^{2}\xi^{\alpha-2}\lesssim \tau_{n}^{2}t_{n-2}^{\alpha-2}\lesssim \tau_{n}^{2}t_{n}^{\alpha-2}.
\end{equation*}
Graded mesh property \eqref{am1} and definition of graded mesh \eqref{SU4} imply
\begin{equation}\label{SU2}
   |\mathbb{L}^{n}|  \lesssim N^{-2\delta}n^{2(\delta-1)}t_{n}^{\alpha-2}\lesssim N^{-2\delta}n^{2(\delta-1)}n^{\delta(\alpha-2)}N^{-\delta(\alpha-2)}\lesssim n^{\delta \alpha-2}N^{-\delta \alpha}~\text{for}~n\geq 3.
\end{equation}
For $n=2$, we apply  Taylor's theorem with integral reminder to get 
\begin{equation}\label{27-5-4-1}
    \left|\mathbb{L}^{2}\right|\lesssim |\int_{t_{2}}^{t_{1}}(s-t_{1})\frac{\partial^{2}u}{\partial t^{2}}(s)~ds+\int_{t_{0}}^{t_{2}}(s)\frac{\partial^{2}u}{\partial t^{2}}(s)~ds|.
\end{equation}
Apply the regularity  estimate \eqref{14-9-22-2} in \eqref{27-5-4-1} to have 
\begin{equation}\label{27-5-4-2}
    \left|\mathbb{L}^{2}\right|\lesssim\int_{t_{1}}^{t_{2}}(s-t_{1})s^{\alpha-2}~ds+\int_{t_{0}}^{t_{2}}(s)s^{\alpha-2}~ds.
\end{equation}
Simplifying equation \eqref{27-5-4-2}  to conclude  $|\mathbb{L}^{2}|\lesssim N^{-\delta \alpha}$.
 From the estimate \eqref{SU2}, it is observed that if $\delta \alpha < 2$ then $|\mathbb{L}^{n}| \lesssim N^{-\delta \alpha}$ and if  $\delta \alpha \geq 2$ then $|\mathbb{L}^{n}| \lesssim N^{-2}.$ Hence result \eqref{mk} follows.
 \end{proof}
\begin{rmk}
Suppose $\delta=1$, i.e., uniform mesh $(\tau_{n}=\tau_{n-1})$ then $\tilde{u}^{n-1}=2u^{n-1}-u^{n-2}$, which is same as the approximation of nonlinear term  proposed in \cite{lalit}.
\end{rmk}
\begin{lem}
Let $\mathbb{Q}^{n}$ be the quadrature error operator on $H^{1}_{0}(\Omega)$ at $t_{n},n\geq 2$ that is given  by
\begin{equation}
    \left(\mathbb{Q}^{n},v\right)=\sum_{j=1}^{n-1}\tau_{j+1}\bar{B}\left(t_{n},t_{j},u^{j},v\right)-\int_{t_{1}}^{t_{n}}B\left(t_{n},s,u(s),v\right)ds~\forall~ v~\in~H^{1}_{0}(\Omega),
\end{equation}
and $\mathbb{Q}^{1}$ be the quadrature error operator on $H^{1}_{0}(\Omega)$ at $t_{1}$ defined by 
\begin{equation}\label{S1}
    \left(\mathbb{Q}^{1},v\right)=\tau_{1}B\left(t_{1},t_{1},u^{1},v\right)-\int_{t_{0}}^{t_{1}}B\left(t_{1},s,u(s),v\right)ds~\forall~ v~\in~H^{1}_{0}(\Omega),
\end{equation}
then $\mathbb{Q}^{1}$ and $\mathbb{Q}^{n}$ satisfy 
\begin{align}
    |\left(\mathbb{Q}^{1},v\right)|&\lesssim N^{-\delta(1+\alpha)}\|\nabla v\| ~\forall~ v~\in~H^{1}_{0}(\Omega),\label{p10}\\
    |\left(\mathbb{Q}^{n},v\right)|&\lesssim N^{-\min\{\delta(1+\alpha),~2\}}\|\nabla v\|~\forall~ v~\in~H^{1}_{0}(\Omega).\label{p0}
\end{align}
\end{lem}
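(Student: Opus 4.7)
The plan is to treat both $\mathbb{Q}^1$ and $\mathbb{Q}^n$ as quadrature errors of elementary Newton--Cotes rules applied to the scalar function $F(s) := B(t_n,s,u(s),v)$. Using the continuity \eqref{2.3}, the product rule, and the smoothness in $s$ of the coefficients $b_0,b_1,b_2$, together with the regularity \eqref{14-9-22-2}, one obtains
\begin{equation*}
|F'(s)| \lesssim \bigl(1 + s^{\alpha-1}\bigr)\|\nabla v\|, \qquad |F''(s)| \lesssim \bigl(1 + s^{\alpha-2}\bigr)\|\nabla v\|,
\end{equation*}
so $F$ is smooth on any subinterval bounded away from $0$; only the initial interval $[t_0,t_1]$ will need separate treatment.

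For $\mathbb{Q}^1$, which is the right-rectangle rule error on $[t_0,t_1]$, I would write
\begin{equation*}
(\mathbb{Q}^1,v) = \int_{t_0}^{t_1}\bigl[F(t_1)-F(s)\bigr]\,ds = \int_{t_0}^{t_1}\!\int_s^{t_1} F'(\sigma)\,d\sigma\,ds,
\end{equation*}
and use the bound on $F'$ to estimate this by a constant times $\bigl(t_1^2 + t_1^{1+\alpha}\bigr)\|\nabla v\| \lesssim t_1^{1+\alpha}\|\nabla v\|$. Since $t_1 = T N^{-\delta}$, the claim \eqref{p10} follows immediately.

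For $\mathbb{Q}^n$ with $n \geq 2$, I would split the quadrature into the composite trapezoidal rule on $[t_1,t_{n-1}]$ plus the left-rectangle rule on $[t_{n-1},t_n]$. On each subinterval $[t_j,t_{j+1}]$ with $j \geq 1$, the standard trapezoidal error bound gives $\tfrac{1}{12}\tau_{j+1}^3\sup_{[t_j,t_{j+1}]}|F''| \lesssim \tau_{j+1}^3(1+t_j^{\alpha-2})\|\nabla v\|$. The non-singular part sums to at most $\max_j \tau_{j+1}^2 \cdot T \lesssim N^{-2}$, whereas for the singular part, inserting $t_j = T(j/N)^\delta$ and $\tau_{j+1} \asymp \delta T N^{-\delta} j^{\delta-1}$ yields
\begin{equation*}
\sum_{j=1}^{n-2}\tau_{j+1}^3 t_j^{\alpha-2} \;\lesssim\; N^{-\delta(1+\alpha)}\sum_{j=1}^{n-2} j^{\delta(1+\alpha)-3},
\end{equation*}
and the remaining sum is $O(1)$, $O(\log N)$, or $O(N^{\delta(1+\alpha)-2})$ according as $\delta(1+\alpha)$ is less than, equal to, or greater than $2$. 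In every case the product is bounded by $N^{-\min\{\delta(1+\alpha),2\}}$, with the logarithmic loss in the borderline case absorbed in the generic $\lesssim$ constant. The left-rectangle contribution on $[t_{n-1},t_n]$ is treated as in the $\mathbb{Q}^1$ case and is bounded by $\tau_n^2(1+t_{n-1}^{\alpha-1})\|\nabla v\|$: for $n=2$ direct substitution gives $\lesssim N^{-\delta(1+\alpha)}$, while for $n \geq 3$ the comparability $t_{n-1} \asymp t_n$ from \eqref{29-11-22-2}, combined with \eqref{am1}, produces $\lesssim N^{-2}$. Adding the two contributions gives \eqref{p0}.

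The principal obstacle is the weighted sum $\sum \tau_{j+1}^3 t_j^{\alpha-2}$: the interplay between the grading parameter $\delta$ and the singular exponent $\alpha-2$ produces the threshold $\delta(1+\alpha)=2$, and the three-case split above is exactly what exposes this and gives the sharp $N^{-\min\{\delta(1+\alpha),2\}}$ rate. Everything else -- the estimates on $F'$, $F''$, the standard trapezoidal remainder, and the comparability of neighbouring mesh nodes -- is routine once the regularity \eqref{14-9-22-2} and graded-mesh estimate \eqref{am1} are in hand.
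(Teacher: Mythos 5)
Your argument is essentially the paper's: the right-rectangle error on $[t_0,t_1]$ is bounded through the first $s$-derivative of $B(t_1,s,u(s),v)$ and the singular weight $s^{\alpha-1}$, giving $t_1^{1+\alpha}\lesssim N^{-\delta(1+\alpha)}$; for $n\geq 2$ the composite trapezoid plus left-rectangle error is bounded via the second derivative $\lesssim(1+s^{\alpha-2})\|\nabla v\|$, the graded-mesh estimate \eqref{am1}, and the three-case analysis at the threshold $\delta(1+\alpha)=2$. The only structural difference is cosmetic: the paper splits the sum at $j=\lfloor n/2\rfloor$ (bounding the near-tail by an integral with $t_j\asymp t_n$, cf.\ \eqref{1a}--\eqref{S15}), whereas you sum $\sum_j j^{\delta(1+\alpha)-3}$ directly over all $j$; both yield the same bound. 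Two small inaccuracies in your write-up: (i) your claim that the left-rectangle term $\tau_n^2 t_{n-1}^{\alpha-1}$ is $\lesssim N^{-2}$ for every $n\geq 3$ is false when $\delta(1+\alpha)<2$ (e.g.\ $\delta=1$, small $n$ gives only $N^{-\delta(1+\alpha)}$); the correct uniform bound, which your own computation with \eqref{am1} and \eqref{29-11-22-2} actually delivers and which suffices for \eqref{p0}, is $N^{-\min\{\delta(1+\alpha),2\}}$. (ii) In the borderline case $\delta(1+\alpha)=2$ the factor $\log N$ cannot literally be absorbed into the generic constant; you pick up $N^{-2}\log N$ there, exactly as the paper's estimate \eqref{1a} does (the paper silently drops it too), so this is a shared imprecision rather than a new gap.
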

\begin{proof}
First we estimate $\left(\mathbb{Q}^{1},v\right)$.  For $s \in (0,t_{1})$, apply Taylor series expansion of the function $B(t_{1},s,u(s),v)$ around  $B(t_{1},t_{1},u^{1},v)$ to get 
\begin{equation*}
\begin{aligned}
    &B(t_{1},t_{1},u^{1},v)-B(t_{1},s,u(s),v)=(t_{1}-s)\frac{\partial}{\partial s}\left[B\left(t_{1},s,u(s),v\right)\right](\xi_{1}(s))~\text{f.s.}~\xi_{1}(s)\in (s,t_{1})\\
    &=(t_{1}-s)\left(\frac{\partial B}{\partial s}\left(t_{1},s,u(s),v\right)(\xi_{1}(s))+\left[B\left(t_{1},s,\frac{\partial u}{\partial s}(s),v\right)\right](\xi_{1}(s))\right).
    \end{aligned}
\end{equation*}
Integrate both sides over $(t_{0},t_{1})$ and smoothness of  coefficients of the memory operator \eqref{2.3} implies
\begin{equation*}
\begin{aligned}
   |\left(\mathbb{Q}^{1},v\right)|  \lesssim \int_{0}^{t_{1}}(t_{1}-s)\left(\|\nabla u(\xi_{1}(s))\|+\|\nabla \frac{\partial u}{\partial s}(\xi_{1}(s))\|\right)\|\nabla v\|~ds.
    \end{aligned}
\end{equation*}
Using regularity assumptions \eqref{14-9-22-2} and definition of graded mesh \eqref{SU4} implies 
\begin{equation*}
\begin{aligned}
    |\left(\mathbb{Q}^{1},v\right)|  \lesssim  \|\nabla v\| \int_{t_{0}}^{t_{1}}(t_{1}-s)s^{\alpha-1}ds \lesssim \|\nabla v\|t_{1}^{1+\alpha}\lesssim N^{-\delta(1+\alpha)}\|\nabla v\|.
    \end{aligned}
\end{equation*}
 Now, we estimate $\left(\mathbb{Q}^{n},v\right)~n\geq 2$. Consider 
\begin{equation}\label{S8}
\begin{aligned}
    \int_{t_{1}}^{t_{n}}B\left(t_{n},s,u(s),v\right)ds&=\sum_{j=1}^{n-1}\int_{t_{j}}^{t_{j+1}}B\left(t_{n},s,u(s),v\right)ds\\
    &=\sum_{j=1}^{n-2}\int_{t_{j}}^{t_{j+1}}B\left(t_{n},s,u(s),v\right)ds+\int_{t_{n-1}}^{t_{n}}B\left(t_{n},s,u(s),v\right)ds.
    \end{aligned}
\end{equation}
    Let $B^{I}\left(t_{n},s,u(s),v\right)$ be the linear interpolation of $B\left(t_{n},s,u(s),v\right)$  on $[t_{j},t_{j+1}]$ for $ 1\leq j \leq n-2$ in the first part of \eqref{S8} and the second part of \eqref{S8} is approximated by the left rectangle rule then we get 
    \begin{equation}\label{29-11-22-3}
        \begin{aligned}
    \int_{t_{1}}^{t_{n}}B\left(t_{n},s,u(s),v\right)ds
    &=\sum_{j=1}^{n-2}\tau_{j+1}\frac{1}{2}\left[B\left(t_{n},t_{j},u^{j},v\right)+B\left(t_{n},t_{j+1},u^{j+1},v\right)\right]\\
    &+\tau_{n}B\left(t_{n},t_{n-1},u^{n-1},v\right)\\
    &+\sum_{j=1}^{n-2}\int_{t_{j}}^{t_{j+1}}\left(s-t_{j}\right)(s-t_{j+1})\frac{\partial^{2}}{\partial s^{2}}\left[B\left(t_{n},s,u(s),v\right)\right](\eta_{j}(s))ds\\
    &+\int_{t_{n-1}}^{t_{n}}\left(s-t_{n-1}\right)\frac{\partial}{\partial s}\left[B\left(t_{n},s,u(s),v\right)\right](\xi(s))ds\\
    &~\text{for some}~ \eta_{j}(s)\in \left(t_{j},t_{j+1}\right), \xi (s)\in \left(t_{n-1},t_{n}\right).
    \end{aligned}
    \end{equation}
    Use \eqref{S9} in \eqref{29-11-22-3} to obtain 
     \begin{equation}
        \begin{aligned}
    \int_{t_{1}}^{t_{n}}B\left(t_{n},s,u(s),v\right)ds
    &=\sum_{j=1}^{n-1}\tau_{j+1}\bar{B}\left(t_{n},t_{j},u^{j},v\right)\\
    &+\sum_{j=1}^{n-2}\int_{t_{j}}^{t_{j+1}}\left(s-t_{j}\right)(s-t_{j+1})\frac{\partial^{2}}{\partial s^{2}}\left[B\left(t_{n},s,u(s),v\right)\right](\eta_{j}(s))ds\\
    &+\int_{t_{n-1}}^{t_{n}}\left(s-t_{n-1}\right)\frac{\partial}{\partial s}\left[B\left(t_{n},s,u(s),v\right)\right](\xi(s))ds\\
    &~\text{for some}~ \eta_{j}(s)\in \left(t_{j},t_{j+1}\right), \xi(s)\in \left(t_{n-1},t_{n}\right).
    \end{aligned}
    \end{equation}
    Thus, we have 
\begin{equation*}
        \begin{aligned}
    \left(\mathbb{Q}^{n},v\right)&=-\sum_{j=1}^{n-2}\int_{t_{j}}^{t_{j+1}}\left(s-t_{j}\right)(s-t_{j+1})\frac{\partial^{2}}{\partial s^{2}}\left[B\left(t_{n},s,u(s),v\right)\right](\eta_{j}(s))ds\\
    &-\int_{t_{n-1}}^{t_{n}}\left(s-t_{n-1}\right)\frac{\partial}{\partial s}\left[B\left(t_{n},s,u(s),v\right)\right](\xi(s))ds\\
    &~\text{for some}~ \eta_{j}(s)\in \left(t_{j},t_{j+1}\right), \xi(s) \in \left(t_{n-1},t_{n}\right).
    \end{aligned}
    \end{equation*}
    By mean value theorem for integral (\cite{burden2015numerical}, Theorem 1.13), there exists $\eta_{j}\in (t_{j},t_{j+1})$ and $\xi \in (t_{n-1},t_{n})$ such that 
    \begin{equation}\label{S12}
        \begin{aligned}
    |\left(\mathbb{Q}^{n},v\right)|&\leq \sum_{j=1}^{n-2}\max_{\eta_{j}\in \left(t_{j},t_{j+1}\right)}\left|\frac{\partial^{2}}{\partial s^{2}}\left[B\left(t_{n},s,u(s),v\right)\right](\eta_{j})\right|\frac{\tau_{j+1}^{3}}{6}\\
    &+\max_{\xi\in \left(t_{n-1},t_{n}\right)}\left|\frac{\partial}{\partial s}\left[B\left(t_{n},s,u(s),v\right)\right](\xi)\right|\frac{\tau_{n}^{2}}{2},
    \end{aligned}
    \end{equation}
    where 
    \begin{equation*}
    \begin{aligned}
        \frac{\partial^{2}}{\partial s^{2}}\left[B\left(t_{n},s,u(s),v\right)\right](\eta_{j})&=\frac{\partial^{2}B}{\partial s^{2}}\left(t_{n},s,u(s),v\right)(\eta_{j})+2\frac{\partial B}{\partial s}\left(t_{n},s,\frac{\partial u}{\partial s}(s),v\right)(\eta_{j})\\
        &+B\left(t_{n},s,\frac{\partial ^{2}u}{\partial s^{2}}(s),v\right)(\eta_{j}),
        \end{aligned}
    \end{equation*}
    with 
    \begin{equation*}
\begin{aligned}
    \frac{\partial^{2} B}{\partial s^{2}}\left(t_{n},s,u(s),v\right)&=\left(\frac{\partial^{2}}{\partial s^{2}}(b_{2}(x,t_{n},s))\nabla u(s),\nabla v\right)+\left(\frac{\partial^{2}}{\partial s^{2}}(b_{1}(x,t_{n},s))\nabla u(s), v\right)\\
    &+\left(\frac{\partial^{2}}{\partial s^{2}}(b_{0}(x,t_{n},s)) u(s), v\right).
    \end{aligned}
\end{equation*}
Smoothness of coefficients of the  memory operator \eqref{2.3} and  regularity assumption \eqref{14-9-22-2} imply
\begin{equation}\label{S10}
    \begin{aligned}
        \left|\frac{\partial^{2}}{\partial s^{2}}\left[B\left(t_{n},s,u(s),v\right)\right](\eta_{j})\right|&\lesssim  \left(\|\nabla u(\eta_{j})\|+\|\nabla \frac{\partial u}{\partial s}(s)(\eta_{j})\|+\|\nabla \frac{\partial^{2}u}{\partial s^{2}}(\eta_{j})\|\right)\|\nabla v\|\\
        &\lesssim\left[C_{0}+\eta_{j}^{\alpha-1}+\eta_{j}^{\alpha-2}\right]\|\nabla v\|.
        \end{aligned}
    \end{equation}
    Similarly, one have  
    \begin{equation}\label{S13}
        \left|\frac{\partial}{\partial s}\left[B\left(t_{n},s,u(s),v\right)\right](\xi)\right|\lesssim \left[C_{0}+\xi^{\alpha-1}\right]\|\nabla v\|.
    \end{equation}
    In the view of estimates \eqref{S10} and \eqref{S13} in \eqref{S12}, we get
    \begin{equation*}\label{S14}
        \begin{aligned}
    &|\left(\mathbb{Q}^{n},v\right)|\\
    &\lesssim \sum_{j=1}^{n-2}\tau_{j+1}^{3}\max_{\eta_{j}\in \left(t_{j},t_{j+1}\right)}\left[C_{0}+\eta_{j}^{\alpha-1}+\eta_{j}^{\alpha-2}\right]\|\nabla v\|+\max_{\xi\in \left(t_{n-1},t_{n}\right)}\tau_{n}^{2}\left[C_{0}+\xi^{\alpha-1}\right]\|\nabla v\|\\
    &\lesssim \sum_{j=1}^{n-2}\tau_{j+1}^{3}\max_{\eta_{j}\in \left(t_{j},t_{j+1}\right)}\left[C_{0}\eta_{j}^{\alpha-2}\eta_{j}^{2-\alpha}+\eta_{j}^{\alpha-1}\eta_{j}^{\alpha-2}\eta_{j}^{2-\alpha}+\eta_{j}^{\alpha-2}\right]\|\nabla v\|\\
    &+\max_{\xi\in \left(t_{n-1},t_{n}\right)}\tau_{n}^{2}\left[C_{0}\xi^{\alpha-1}\xi^{1-\alpha}+\xi^{\alpha-1}\right]\|\nabla v\|.
    \end{aligned}
    \end{equation*}
    Since $\eta_{j}^{\alpha-2}\leq t_{j}^{\alpha-2}$ and $\eta_{j}^{2-\alpha}\leq t_{j+1}^{2-\alpha}$ also $\xi^{\alpha-1}\leq t_{n-1}^{\alpha-1}$ and $\xi^{1-\alpha}\leq t_{n}^{1-\alpha}$ so we obtain
    \begin{equation}\label{S15}
        \begin{aligned}
    |\left(\mathbb{Q}^{n},v\right)|
    &\lesssim \sum_{j=1}^{n-2}\tau_{j+1}^{3}t_{j}^{\alpha-2}\|\nabla v\|+\tau_{n}^{2}t_{n-1}^{\alpha-1}\|\nabla v\|.
    \end{aligned}
    \end{equation}
    Employing  the definition of $t_{j}$ and estimate \eqref{am1}, we have
    \begin{equation*}
    \begin{aligned}
        \tau_{j+1}^{3}t_{j}^{\alpha-2}&\leq T^{3}N^{-3\delta}j^{3(\delta-1)}T^{\alpha-2}j^{\delta(\alpha-2)}N^{-\delta(\alpha-2)}\\
        &\lesssim  j^{3\delta-3+\delta \alpha-2\delta}N^{-3\delta-\delta\alpha+2\delta}\\
        &\lesssim  j^{\delta(\alpha+1)-3}N^{-\delta(\alpha+1)}.
        \end{aligned}
    \end{equation*}
    Now, apply the well-known convergence results \cite{stynes2017error} for the series to reach at
\begin{equation}\label{1a}
    \sum_{j=1}^{\left[\frac{n}{2}\right]-1}j^{\delta(\alpha+1)-3}n^{-\delta(\alpha+1)}\leq \begin{cases} n^{-\delta(\alpha+1)}&~ \text{if}~\delta(\alpha+1)<2,\\
    n^{-2}\ln n &~\text{if}~\delta(\alpha+1)=2,\\
    n^{-2}&~\text{if}~\delta(\alpha+1)>2.
    \end{cases}
\end{equation} 
Further $\left[\frac{n}{2}\right]\leq j \leq n-2$, we have 
\begin{equation*}
\begin{aligned}
    &\left|\sum_{j=\left[\frac{n}{2}\right]}^{n-2}\int_{t_{j}}^{t_{j+1}}\left(s-t_{j}\right)\left(s-t_{j+1}\right)\frac{\partial^{2}}{\partial s^{2}}\left[B\left(t_{n},s,u(s),v\right)\right](\eta_{j})ds\right|\\
    &\leq \|\nabla v\|\sum_{j=\left[\frac{n}{2}\right]}^{n-2}\tau_{j+1}^{2}t_{j}^{\alpha-2}\int_{t_{j}}^{t_{j+1}}1~ds.
    \end{aligned}
\end{equation*}
Use estimate \eqref{am1} and $t_{j}^{\alpha-2}\lesssim t_{n}^{\alpha-2}$ for $\left[\frac{n}{2}\right]\leq j \leq n-2$ to get 
\begin{equation}\label{1a-1}
\begin{aligned}
    &\left|\sum_{j=\left[\frac{n}{2}\right]}^{n-2}\int_{t_{j}}^{t_{j+1}}\left(s-t_{j}\right)\left(s-t_{j+1}\right)\frac{\partial^{2}}{\partial s^{2}}\left[B\left(t_{n},s,u(s),v\right)\right](\eta_{j})ds\right|\\
    &\leq T^{2}N^{-2\delta}n^{2(\delta-1) }t_{n}^{\alpha-2}\sum_{j=\left[\frac{n}{2}\right]}^{n-2}\int_{t_{j}}^{t_{j+1}}1~ds\|\nabla v\|\\
    &\lesssim T^{2}N^{-2\delta}n^{2(\delta-1)}t_{n}^{\alpha-2}\int_{t_{\left[\frac{n}{2}\right]}}^{t_{n-1}}1~ds\|\nabla v\|\\
    &\lesssim T^{2}N^{-2\delta}n^{2(\delta-1)}t_{n}^{\alpha-2}t_{n-1}\|\nabla v\|\\
    &\lesssim T^{2}N^{-2\delta}n^{2(\delta-1)}t_{n}^{\alpha-1}\|\nabla v\|.
    \end{aligned}
\end{equation}
 Put the value of $t_{n}$ in \eqref{1a-1} to deduce
\begin{equation}\label{1c}
\begin{aligned}
    &\left|\sum_{j=\left[\frac{n}{2}\right]}^{n-2}\int_{t_{j}}^{t_{j+1}}\left(s-t_{j}\right)\left(s-t_{j+1}\right)\frac{\partial^{2}}{\partial s^{2}}\left[B\left(t_{n},s,u(s),v\right)\right](\eta_{j})ds\right|\\
    &\lesssim T^{2}N^{-2\delta}n^{2(\delta-1)}T^{\alpha-1}n^{\delta(\alpha-1)}N^{-\delta(\alpha-1)}\|\nabla v\|\\
    &\lesssim N^{-\delta(\alpha+1)}n^{\delta(1+\alpha)-2}\|\nabla v\|.
    \end{aligned}
\end{equation}
Similarly, in the last part of \eqref{S15} we have 
\begin{equation}\label{1d}
    \tau_{n}^{2}t_{n-1}^{\alpha-1}\lesssim \tau_{n}^{2}t_{n}^{\alpha-1}\lesssim  N^{-\delta(\alpha+1)}n^{\delta(1+\alpha)-2}.
\end{equation}
Combine the estimates \eqref{1a}-\eqref{1d} in \eqref{S15} to conclude  the result \eqref{p0}. 
\end{proof}
\subsection{Global convergence rates}
\noindent In this subsection, we combine the local truncation errors  \eqref{p1},\eqref{mk},\eqref{p10}, and \eqref{p0} to establish global convergence rates. We decompose the error $(u(t_{n})-u_{h}^{n})$ into  projection error $\left(\rho^{n}:=(u-M_{h}u)(t_{n})\right)$~ and the truncation error $\left(\theta^{n}:=M_{h}u(t_{n})-u_{h}^{n}\right)$  as in the semi discrete error analysis. 

\begin{thm}\label{Main Theorem} Suppose that $(A1)$-$(A2)$ hold and the solution $u$ of the equation \eqref{16-9-22-1} satisfies limited regularity assumptions \eqref{14-9-22-1}-\eqref{14-9-22-2}. Then, the fully discrete scheme \eqref{SU1}-\eqref{SUM3} has the following rate of accuracy 
\begin{equation}\label{lk}
    \max_{1\leq n \leq N}\|u(t_{n})-u_{h}^{n}\|\lesssim P^{-1}+N^{-\min\{\delta \alpha,2-\alpha\}}.
\end{equation}
\end{thm}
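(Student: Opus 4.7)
The plan is to split the total error as $u(t_n) - u_h^n = \rho^n + \theta^n$ with $\rho^n := (u - M_h u)(t_n)$ and $\theta^n := M_h u(t_n) - u_h^n$, exactly as in the semi-discrete analysis. The approximation properties of the modified Ritz--Volterra projection (Theorems \ref{best approximation} and \ref{mn}) immediately supply the bounds on $\|\rho^n\|$, $\|\nabla \rho^n\|$ and $\|\mathbb{D}_t^\alpha \rho^n\|$ that produce the $P^{-1}$ spatial term in \eqref{lk}. The remaining work is to estimate $\max_n \|\theta^n\|$.

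I would first derive the error equation for $\theta^n$. Starting from \eqref{nonlinear weak problem} at $t=t_n$, using the splitting $^CD_t^\alpha u(t_n) = \mathbb{D}_t^\alpha u^n + \mathbb{T}^n$, adding and subtracting the linearized diffusion term $M(\|\nabla \tilde{u}_h^{n-1}\|^2)(\nabla M_h u^n, \nabla v_h)$, expressing the memory integral as the fully discrete quadrature plus the residual $(\mathbb{Q}^n, v_h)$, and invoking the defining identity \eqref{2.5}, subtraction from \eqref{SU1} yields, for $n \ge 2$,
\begin{align*}
(\mathbb{D}_t^\alpha \theta^n, v_h) &+ M(\|\nabla \tilde{u}_h^{n-1}\|^2)(\nabla \theta^n, \nabla v_h) \\
&= -(\mathbb{D}_t^\alpha \rho^n, v_h) - (\mathbb{T}^n, v_h) + (\mathbb{Q}^n, v_h) \\
&\quad + \bigl[M(\|\nabla \tilde{u}_h^{n-1}\|^2) - M(\|\nabla u^n\|^2)\bigr](\nabla M_h u^n, \nabla v_h) + \sum_{j=1}^{n-1} \tau_{j+1} \bar{B}(t_n, t_j, \theta^j, v_h),
\end{align*}
together with a simpler one-step counterpart at $n=1$. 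Testing with $v_h = \theta^n$, Proposition \ref{5-11-22-1} supplies $(\mathbb{D}_t^\alpha \theta^n, \theta^n) \ge \tfrac{1}{2} \mathbb{D}_t^\alpha \|\theta^n\|^2$ and (A2) provides coercivity of the stiffness form. The Lipschitz continuity of $M$, combined with the linearization bound \eqref{mk} and the uniform gradient bounds on $u$, $u_h$, $M_h u$ furnished by Theorems \ref{a priori bounds }, \ref{12345-2} and by \eqref{16-10-22-7}, reduces the nonlinear discrepancy to a multiple of $\|\nabla \theta^{n-1}\| + \|\nabla \theta^{n-2}\| + |\mathbb{L}^n|$; the memory sum is controlled by \eqref{2.3} and the quadrature residual by \eqref{p0}.

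The principal obstacle is that the memory quadrature on a graded mesh precludes the sharp fractional Gronwall inequalities used in \cite{jin2018numerical, ren2021sharp}. To circumvent this I would mimic the proof of the a priori bound \eqref{numerical bound}: work in the weighted norm $\||\theta^n\|| = \|\theta^n\| + \tau_n^{\alpha/2}\|\nabla \theta^n\|$ of \eqref{new norm}, absorb the $\tau_n^{\alpha/2}$ factors on the gradient contributions via the M-Conv property \eqref{M conv prop } and the weight summation \eqref{weight estimate} from Proposition \ref{7-8-1}, and then apply the standard discrete Gronwall inequality. Since $\theta^0 = 0$ (because $u_h^0 = R_h u_0 = (M_h u)(0)$), this delivers
\[
\max_{1 \le n \le N} \||\theta^n\|| \lesssim \max_n \Bigl(\|\mathbb{T}^n\| + |\mathbb{L}^n| + \sup_{v \in H^1_0(\Omega)} \tfrac{|(\mathbb{Q}^n, v)|}{\|\nabla v\|} + \|\mathbb{D}_t^\alpha \rho^n\| + \|\nabla \rho^n\|\Bigr).
\]
Inserting $\|\mathbb{T}^n\| \lesssim N^{-\min\{\delta\alpha,\, 2-\alpha\}}$ from Lemma \ref{6-5-22-5}, the linearization bound $|\mathbb{L}^n| \lesssim N^{-\min\{\delta\alpha,\, 2\}}$, the quadrature bound $N^{-\min\{\delta(1+\alpha),\, 2\}}$ from \eqref{p0}, and the projection estimates from Theorems \ref{best approximation} and \ref{mn}, the dominant temporal factor is $N^{-\min\{\delta\alpha,\, 2-\alpha\}}$; the triangle inequality with $\|\rho^n\|$ then yields \eqref{lk}.
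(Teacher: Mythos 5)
Your proposal follows essentially the same route as the paper's proof: the $\rho^n/\theta^n$ splitting via the modified Ritz--Volterra projection, the error equation tested with $v_h=\theta^n$ (with the linearization error $\mathbb{L}^n$ extracted from the Kirchhoff term), the weighted norm $\||\cdot\||$ of \eqref{new norm} combined with Proposition \ref{7-8-1} and the standard discrete Gr\"{o}nwall inequality (the maximum-index argument of the paper being implicit in your ``mimic the a priori bound \eqref{numerical bound}'' step, using $\theta^0=0$), followed by insertion of the local truncation bounds \eqref{p1}, \eqref{mk}, \eqref{p10}, \eqref{p0} and the triangle inequality with \eqref{A10}. The only cosmetic deviation is bookkeeping of the temporal consistency term: you write it as $\mathbb{D}^{\alpha}_{t}\rho^{n}$ plus $\mathbb{T}^{n}$ applied to $u$, while the paper uses $^{C}D^{\alpha}_{t}\rho^{n}$ (controlled by \eqref{B10}) plus the L1 truncation applied to $M_{h}u$; both versions are bounded by the same projection and truncation estimates, so this does not change the argument.
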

\begin{proof} For $n=1$, put $u_{h}^{1}=M_{h}u^{1}-\theta^{1}$ in \eqref{SUM1}. Using weak formulation \eqref{nonlinear weak problem} and definition of modified Ritz-Volterra projection operator \eqref{2.5}, we obtain
\begin{equation}\label{30-11-22-2}
\begin{aligned}
    \left(\mathbb{D}^{\alpha}_{t}\theta^{1},v_{h}\right)&+M\left(\|\nabla u_{h}^{1}\|^{2}\right)(\nabla \theta^{1},\nabla v_{h})\\
    &=\left(\mathbb{D}^{\alpha}_{t}M_{h}u^{1}-~^{C}D^{\alpha}_{t_{1}}M_{h}u,v_{h}\right)+\left(^{C}D^{\alpha}_{t_{1}}M_{h}u-~^{C}D^{\alpha}_{t_{1}}u,v_{h}\right)\\
    &+\left[M\left(\|\nabla u_{h}^{1}\|^{2}\right)-M\left(\|\nabla u^{1}\|^{2}\right)\right](\nabla M_{h}u^{1},\nabla v_{h})+\tau_{1}B(t_{1},t_{1},\theta^{1},v_{h})\\
    &+\int_{0}^{t_{1}}B(t_{1},s,M_{h}u(s),v_{h})~ds-\tau_{1}B(t_{1},t_{1},M_{h}u^{1},v_{h}).
    \end{aligned}
\end{equation}
Set $v_{h}=\theta^{1}$ in \eqref{30-11-22-2} with $\theta^{0}=0$ and apply \eqref{approximation of Caputo Derivative}, (A2) together with \eqref{2.3} and  Cauchy-Schwarz inequality   to get 
\begin{equation}\label{30-11-22-3}
\begin{aligned}
    \|\theta^{1}\|^{2}&+\tau_{1}^{\alpha}(m_{0}-\tau_{1}B_{0})\|\nabla \theta^{1}\|^{2}\\
    &\lesssim\tau_{1}^{\alpha}\left(\|\mathbb{T}^{1}\|+\|~^{C}D^{\alpha}_{t_{1}}\rho\|\right)\|\theta^{1}\|+\tau_{1}^{\alpha}\|\mathbb{Q}^{1}\|\|\nabla \theta^{1}\|\\
    &+\tau_{1}^{\alpha}L_{M}(\|\nabla u_{h}^{1}\|+\|\nabla u^{1}\|)(\|\nabla \rho^{1}\|+\|\nabla \theta^{1}\|)\|\nabla M_{h}u^{1}\|~\|\nabla \theta^{1}\|.
    \end{aligned}
\end{equation}
Employ  a priori bounds \eqref{grad bound}, \eqref{AMP1as}, and \eqref{16-10-22-7} to reduce the equation \eqref{30-11-22-3} into 
\begin{equation}\label{30-11-22-4}
\begin{aligned}
    \|\theta^{1}\|^{2}&+\tau_{1}^{\alpha}(m_{0}-\tau_{1}B_{0}-2L_{M}K^{2})\|\nabla \theta^{1}\|^{2}\\
    &\lesssim\tau_{1}^{\alpha}\left(\|\mathbb{T}^{1}\|+\|~^{C}D^{\alpha}_{t_{1}}\rho\|\right)\|\theta^{1}\|+\tau_{1}^{\alpha}\|\mathbb{Q}^{1}\|\|\nabla \theta^{1}\|+\tau_{1}^{\alpha}\|\nabla \rho^{1}\|~\|\nabla \theta^{1}\|.
    \end{aligned}
\end{equation}
For sufficiently small $\tau_{1}<\frac{m_{0}}{B_{0}}$ for instance take $\tau_{1}=\frac{m_{0}}{2B_{0}}$ to obtain
\begin{equation}\label{30-11-22-5}
\begin{aligned}
    \|\theta^{1}\|^{2}+\tau_{1}^{\alpha}(m_{0}-4L_{M}K^{2})\|\nabla \theta^{1}\|^{2}
    &\lesssim\tau_{1}^{\alpha}\left(\|\mathbb{T}^{1}\|+\|~^{C}D^{\alpha}_{t_{1}}\rho\|\right)\|\theta^{1}\|+\tau_{1}^{\alpha}\|\mathbb{Q}^{1}\|\|\nabla \theta^{1}\|\\
    &+\tau_{1}^{\alpha}\|\nabla \rho^{1}\|~\|\nabla \theta^{1}\|.
    \end{aligned}
\end{equation}
We utilize (A2) and definition of weighted $H^{1}_{0}(\Omega)$ norm \eqref{new norm} to convert the equation \eqref{30-11-22-5} into 
\begin{equation}\label{30-11-22-6}
\begin{aligned}
    \||\theta^{1}\||^{2}
    &\lesssim \tau_{1}^{\alpha}\|\mathbb{T}^{1}\|~\||\theta^{1}\||+ \tau_{1}^{\alpha}\|~^{C}D^{\alpha}_{t_{1}}\rho\|~\||\theta^{1}\||+\tau_{1}^{\alpha/2}\|\mathbb{Q}^{1}\|~\|| \theta^{1}\||+\tau_{1}^{\alpha/2}\|\nabla \rho^{1}\|~\|| \theta^{1}\||.
    \end{aligned}
\end{equation}
Cancellation of $\||\theta^{1}\||$ from both sides of \eqref{30-11-22-6} and approximation properties \eqref{Sw}, \eqref{B10}, \eqref{p10}, \eqref{A10} yield
\begin{equation}\label{30-11-22-7}
\begin{aligned}
    \||\theta^{1}\||
    &\lesssim \tau_{1}^{\alpha}t_{1}^{-\alpha}N^{-\delta\alpha}+ \tau_{1}^{\alpha}h^{2}+\tau_{1}^{\alpha/2}N^{-\delta(1+\alpha)}+\tau_{1}^{\alpha/2}h \lesssim h+N^{-\delta\alpha}.
    \end{aligned}
\end{equation}
If
\begin{equation}
    \max_{1\leq n \leq N}\||\theta^{n}\||=\||\theta^{1}\||,
\end{equation} 
then proof of \eqref{lk} follows immediately. If 
\begin{equation}\label{30-11-22-8}
    \max_{1\leq n \leq N}\||\theta^{n}\||=\||\theta^{m}\||~\text{ for any}~ m~ \in ~\{2,3,4,\dots,N\},
\end{equation} 
then we consider \eqref{SU1} with $u_{h}^{n}=M_{h}u^{n}-\theta^{n}$. Evaluation of  weak formulation \eqref{nonlinear weak problem} and modified Ritz-Volterra projection operator \eqref{2.5} at $t_{n}$ produce the following error equation in $\theta^{n}$
\begin{equation}\label{asd}
\begin{aligned}
    \left(\mathbb{D}^{\alpha}_{t}\theta^{n},v_{h}\right)&+M\left(\|\nabla \tilde{u}_{h}^{n-1}\|^{2}\right)(\nabla \theta^{n},\nabla v_{h})\\
    &=(-\mathbb{T}^{n}-~^{C}D^{\alpha}_{t}\rho^{n}-\mathbb{Q}^{n},v_{h})+\sum_{j=1}^{n-1}\tau_{j+1}\bar{B}(t_{n},t_{j},\theta^{j},v_{h})\\
    &+\left(M\left(\|\nabla \tilde{u}_{h}^{n-1}\|^{2}\right)-M\left(\|\nabla u^{n}\|^{2}\right)\right)(\nabla M_{h}u^{n},\nabla v_{h}).
    \end{aligned}
\end{equation}
Substitute  $v_{h}=\theta^{n}$ in \eqref{asd} and employ \eqref{derivative positivity}, (A2) along  with \eqref{2.3} and  Cauchy-Schwarz inequality to get
\begin{equation*}\label{thetan 2}
\begin{aligned}
    \frac{1}{2}\mathbb{D}^{\alpha}_{t}\|\theta^{n}\|^{2}+m_{0}\|\nabla \theta^{n}\|^{2}&\leq 
    \left(\|\mathbb{T}^{n}\|+\|~^{C}D^{\alpha}_{t}\rho^{n}\|\right)\|\theta^{n}\|+\sum_{j=1}^{n-1}\tau_{j+1}B_{0}\|\nabla \theta^{j}\|\|\nabla \theta^{n}\|\\
    &+L_{M}\left(\|\nabla \tilde{u}_{h}^{n-1}\|+\|\nabla u^{n}\|\right)\left(\|\nabla \tilde{u}_{h}^{n-1}-\nabla u^{n}\|\right)\|\nabla M_{h}u^{n}\|\|\nabla \theta^{n}\|\\
    &+\|\mathbb{Q}^{n}\|\|\nabla \theta^{n}\|.
    \end{aligned}
\end{equation*}
Applying  \eqref{approximation of Caputo Derivative} and (A2), we have
\begin{equation*}\label{thetan 2-1}
\begin{aligned}
     &\|\theta^{n}\|^{2}+\tau_{n}^{\alpha}\|\nabla \theta^{n}\|^{2}\\
     &\lesssim \tau_{n}^{\alpha}
    \left(\|\mathbb{T}^{n}\|+\|~^{C}D^{\alpha}_{t}\rho^{n}\|\right)\|\theta^{n}\|+\tau_{n}^{\alpha}\sum_{j=1}^{n-1}\tau_{j+1}\|\nabla \theta^{j}\|\|\nabla \theta^{n}\|+\tau_{n}^{\alpha}\|\mathbb{Q}^{n}\|\|\nabla \theta^{n}\|\\
    &+\tau_{n}^{\alpha}L_{M}\left(\|\nabla \tilde{u}_{h}^{n-1}\|+\|\nabla u^{n}\|\right)\left(\|\nabla \tilde{\rho}^{n-1}\|+\|\nabla \tilde{\theta}^{n-1}\|+\|\nabla \mathbb{L}^{n}\|\right)\|\nabla M_{h}u^{n}\|\|\nabla \theta^{n}\|\\
    &+\tau_{n}^{\alpha}\sum_{j=0}^{n-1}(k_{n,j+1}-k_{n,j})\|\theta^{j}\|^{2}.
    \end{aligned}
\end{equation*}
A priori bounds \eqref{grad bound}, \eqref{AMP1as}, \eqref{16-10-22-7} and weighted $H^{1}_{0}(\Omega)$ norm \eqref{new norm} imply 
\begin{equation}\label{thetan 4}
\begin{aligned}
    &\||\theta^{n}\||^{2}\\
    &\lesssim 
    \left(\tau_{n}^{\alpha}\|\mathbb{T}^{n}\|+\tau_{n}^{\alpha}\|~^{C}D^{\alpha}_{t}\rho^{n}\|+\tau_{n}^{\alpha/2}\|\mathbb{Q}^{n}\|+\tau_{n}^{\alpha/2}\|\nabla \mathbb{L}^{n}\|+\tau_{n}^{\alpha/2}\|\nabla \tilde{\rho}^{n-1}\|\right)\||\theta^{n}\||\\
    &+\tau_{n}^{\alpha/2}\sum_{j=1}^{n-1}\tau_{j+1}\tau_{j}^{-\alpha/2}\|| \theta^{j}\||~\|| \theta^{n}\||+\tau_{n}^{\alpha/2}\tau_{n-1}^{-\alpha/2}\||\theta^{n-1}\||~\||\theta^{n}\||\\
    &+\tau_{n}^{\alpha/2}\tau_{n-2}^{-\alpha/2}\||\theta^{n-2}\||~\||\theta^{n}\||+\tau_{n}^{\alpha}\sum_{j=0}^{n-1}(k_{n,j+1}-k_{n,j})\||\theta^{j}\||^{2}.
    \end{aligned}
\end{equation}
Since the equation \eqref{thetan 4} is true for all $n~(2\leq n \leq N)$, therefore it is also true for $m~(2\leq m \leq N)$ at which \eqref{30-11-22-8} holds. Thus,
\begin{equation}\label{30-11-22-9}
\begin{aligned}
    &\||\theta^{m}\||^{2}\\
    &\lesssim    \left(\tau_{m}^{\alpha}\|\mathbb{T}^{m}\|+\tau_{m}^{\alpha}\|~^{C}D^{\alpha}_{t}\rho^{m}\|+\tau_{m}^{\alpha/2}\|\mathbb{Q}^{m}\|+\tau_{m}^{\alpha/2}\|\nabla \mathbb{L}^{m}\|+\tau_{m}^{\alpha/2}\|\nabla \tilde{\rho}^{m-1}\|\right)\||\theta^{m}\||\\
    &+\tau_{m}^{\alpha/2}\sum_{j=1}^{m-1}\tau_{j+1}\tau_{j}^{-\alpha/2}\|| \theta^{j}\||~\|| \theta^{m}\||+\tau_{m}^{\alpha/2}\tau_{m-1}^{-\alpha/2}\||\theta^{m-1}\||~\||\theta^{m}\||\\
    &+\tau_{m}^{\alpha/2}\tau_{m-2}^{-\alpha/2}\||\theta^{m-2}\||~\||\theta^{m}\||+\tau_{m}^{\alpha}\sum_{j=0}^{m-1}(k_{m,j+1}-k_{m,j})\||\theta^{j}\||~\||\theta^{m}\||.
    \end{aligned}
\end{equation}
Cancel  $\||\theta^{m}\||$ from both sides of \eqref{30-11-22-9} and apply discrete Gr\"{o}nwall's inequality to get
\begin{equation*}\label{thetan 7}
\begin{aligned}
    \||\theta^{m}\||&\lesssim C_{G}
    \left(\tau_{m}^{\alpha}\|\mathbb{T}^{m}\|+\tau_{m}^{\alpha}\|~^{C}D^{\alpha}_{t}\rho^{m}\|+\tau_{m}^{\alpha/2}\|\mathbb{Q}^{m}\|+\tau_{m}^{\alpha/2}\|\nabla \mathbb{L}^{m}\|+\tau_{m}^{\alpha/2}\|\nabla \tilde{\rho}^{m-1}\|\right),
    \end{aligned}
\end{equation*}
where constant $C_{G}$ is given by 
\begin{equation*}
    C_{G}=exp\left(\tau_{m}^{\alpha/2}\sum_{j=1}^{m-1}\tau_{j+1}\tau_{j}^{-\alpha/2}+\tau_{m}^{\alpha/2}\tau_{m-1}^{-\alpha/2}+\tau_{m}^{\alpha/2}\tau_{m-2}^{-\alpha/2}+\tau_{m}^{\alpha}\sum_{j=0}^{m-1}(k_{m,j+1}-k_{m,j})\right).
\end{equation*}
Making use of Proposition \ref{7-8-1} together with approximation properties  \eqref{p1}, \eqref{B10}, \eqref{p0}, \eqref{mk},  we deduce  
\begin{equation}\label{max est}
\begin{aligned}
    \||\theta^{m}\||&\lesssim \tau_{m}^{\alpha}t_{m}^{-\alpha}N^{-\min\{\delta \alpha,2-\alpha\}}+\tau_{m}^{\alpha}h^{2}+\tau_{m}^{\alpha/2}N^{-\min\{\delta(1+\alpha),~2\}}+\tau_{m}^{\alpha/2}N^{-\min\{\delta \alpha,2\}}+\tau_{m}^{\alpha/2}h\\
    &\lesssim  h+N^{-\min\{\delta \alpha,2-\alpha\}}.
    \end{aligned}
\end{equation}
Finally, by triangle inequality, \eqref{A10}, \eqref{max est}, we conclude 
\begin{equation}\label{30-11-22-10}
\begin{aligned}
    \max_{1\leq n \leq N}\|u(t_{n})-u_{h}^{n}\|&\lesssim \max_{1\leq n \leq N}\| \rho^{n}\|+\max_{1\leq n \leq N}\| \theta^{n}\|\\
    &\lesssim \max_{1\leq n \leq N}\| \rho^{n}\|+\max_{1\leq n \leq N}\|| \theta^{n}\||\\
    &\lesssim  h^2+h+N^{-\min\{\delta \alpha,2-\alpha\}}\lesssim  h+N^{-\min\{\delta \alpha,2-\alpha\}}.
    \end{aligned}
\end{equation}
The result \eqref{lk} follows by taking $h=P^{-1}$, where $P$  is the number of degree of freedom in the space direction.
\end{proof}
\begin{rmk}
If we use the estimate \eqref{max est} to obtain  $L^{\infty}(0,T;H^{1}_{0}(\Omega))$ norm convergence rate, then there is a  loss of accuracy  of order $(\alpha/2)$ as follows
\begin{equation}
    \|\nabla \theta^{m}\|\lesssim \tau_{m}^{-\alpha/2}\left(h+N^{-\min\{\delta \alpha,2-\alpha\}}\right).
\end{equation}
We recover  this loss of accuracy in the following theorem with the help of  discrete Laplacian operators  defined in \eqref{discrete laplace} and \eqref{discrete laplace-1}.
\end{rmk}
\begin{thm}\label{Main Theorem-1} Suppose that $(A1)$-$(A2)$ hold and the solution $u$ of the equation \eqref{16-9-22-1} satisfies limited regularity assumptions \eqref{14-9-22-1}-\eqref{14-9-22-2}. Then, the fully discrete scheme \eqref{SU1}-\eqref{SUM3} has the following rate of accuracy 
\begin{equation}\label{lks}
    \max_{1\leq n \leq N}\|\nabla u(t_{n})-\nabla u_{h}^{n}\|\lesssim P^{-1}+N^{-\min\{\delta \alpha,2-\alpha\}}.
\end{equation}
\end{thm}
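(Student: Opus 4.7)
The plan is to mimic the argument of Theorem \ref{Main Theorem} but to test the error equation against $-\Delta_{h}\theta^{n}$ in place of $\theta^{n}$, so that the dissipation provides control of $\|\Delta_{h}\theta^{n}\|$ and yields a clean bound on $\|\nabla\theta^{n}\|$ without an inverse factor of $\tau_{n}^{\alpha/2}$. As in the semi-discrete proofs, I decompose
\begin{equation*}
u(t_{n})-u_{h}^{n}=\rho^{n}+\theta^{n},\qquad \rho^{n}=u(t_{n})-M_{h}u(t_{n}),\qquad \theta^{n}=M_{h}u(t_{n})-u_{h}^{n}.
\end{equation*}
For $\rho^{n}$ the bound $\|\nabla\rho^{n}\|\lesssim h$ already follows from Theorem \ref{best approximation}, so it remains to estimate $\|\nabla\theta^{n}\|$. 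Parallel to \eqref{18-10-22-6-1-1} I work with the weighted norm $\||\theta^{n}\||_{1}:=\|\nabla\theta^{n}\|+\tau_{n}^{\alpha/2}\|\Delta_{h}\theta^{n}\|$, and my goal is to prove $\max_{1\leq n\leq N}\||\theta^{n}\||_{1}\lesssim h+N^{-\min\{\delta\alpha,\,2-\alpha\}}$.

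For $n=1$, I start from the analogue of \eqref{30-11-22-2}, replace $(\nabla\theta^{1},\nabla v_{h})$ by $(-\Delta_{h}\theta^{1},v_{h})$ and the memory bilinear form by $(-\Delta_{h}^{b_{2}}\theta^{1},v_{h})$ plus lower-order $\nabla\cdot(b_{1}\cdot)$ and $b_{0}$ terms, exactly as in \eqref{18-10-22-6}. Testing with $v_{h}=-\Delta_{h}\theta^{1}$, using positivity of $M$ together with the bound $\|\Delta_{h}^{b_{2}}\theta^{1}\|\lesssim\|\nabla\theta^{1}\|$ from \eqref{16-10-22-5}, the Cauchy-Schwarz inequality, and the $H^{1}$-stability \eqref{14-9-22-4-2} of $P_{h}$, I arrive at an inequality of the type
\begin{equation*}
\||\theta^{1}\||_{1}^{2}\lesssim\bigl(\tau_{1}^{\alpha/2}\|\mathbb{T}^{1}\|+\tau_{1}^{\alpha/2}\|\nabla\,^{C}D^{\alpha}_{t_{1}}\rho\|+\tau_{1}^{\alpha/2}\|\mathbb{Q}^{1}\|_{\ast}+\tau_{1}^{\alpha/2}\|\nabla\rho^{1}\|+\tau_{1}^{1+\alpha/2}\|\nabla\theta^{1}\|+\tau_{1}^{1+\alpha/2}\|\theta^{1}\|\bigr)\||\theta^{1}\||_{1},
\end{equation*}
where the nonlinear term is absorbed by Lipschitz continuity of $M$ together with the a priori gradient bounds \eqref{grad bound} and \eqref{16-10-22-7}. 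Choosing $\tau_{1}$ sufficiently small, cancelling $\||\theta^{1}\||_{1}$, and inserting the approximation estimates \eqref{Sw}, \eqref{B10}, \eqref{p10}, \eqref{A10} and \eqref{numerical bound} produces $\||\theta^{1}\||_{1}\lesssim h+N^{-\delta\alpha}$.

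For $n\geq 2$, if the maximum is attained at $n=1$ we are done; otherwise let $m\in\{2,\dots,N\}$ realize $\max_{1\leq n\leq N}\||\theta^{n}\||_{1}=\||\theta^{m}\||_{1}$. Using the discrete Laplacians as in \eqref{18-10-22-4-2}, the error equation becomes
\begin{equation*}
(\mathbb{D}^{\alpha}_{t}\theta^{n},v_{h})+M(\|\nabla\tilde u_{h}^{n-1}\|^{2})(-\Delta_{h}\theta^{n},v_{h})=(\mathcal R^{n},v_{h})+\sum_{j=1}^{n-1}\tau_{j+1}\bigl(-\overline{\Delta_{h}^{b_{2}}}\,\theta^{j}+\text{l.o.t.},v_{h}\bigr),
\end{equation*}
where $\mathcal R^{n}$ groups the truncation error $\mathbb{T}^{n}$, the projection derivative term $^{C}D^{\alpha}_{t}\rho^{n}$, the quadrature error $\mathbb{Q}^{n}$, the linearization error coming from $\mathbb{L}^{n}$, and the nonlinear-coefficient difference, all measured in dual norms that are controlled by \eqref{p1}, Theorem \ref{mn}, \eqref{p0}, \eqref{mk}, together with \eqref{grad bound} and \eqref{16-10-22-7}. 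Choosing $v_{h}=-\Delta_{h}\theta^{n}$, applying Proposition \ref{5-11-22-1} to $\|\nabla\theta^{n}\|^{2}$, using $\|\overline{\Delta_{h}^{b_{2}}}\,\theta^{j}\|\lesssim\|\nabla\theta^{j}\|$ as in \eqref{16-10-22-5}, and grouping terms in the $\||\cdot\||_{1}$ norm gives a recursion
\begin{equation*}
\||\theta^{m}\||_{1}^{2}\lesssim\bigl(\text{data terms}\bigr)\||\theta^{m}\||_{1}+\tau_{m}^{\alpha/2}\sum_{j=1}^{m-1}\tau_{j+1}\tau_{j}^{-\alpha/2}\||\theta^{j}\||_{1}\||\theta^{m}\||_{1}+\tau_{m}^{\alpha}\sum_{j=0}^{m-1}(k_{m,j+1}-k_{m,j})\||\theta^{j}\||_{1}\||\theta^{m}\||_{1},
\end{equation*}
after which cancellation of $\||\theta^{m}\||_{1}$, the weight bound \eqref{weight estimate} of Proposition \ref{7-8-1}, and the standard discrete Gr\"onwall inequality deliver $\||\theta^{m}\||_{1}\lesssim h+N^{-\min\{\delta\alpha,\,2-\alpha\}}$. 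Combining with $\|\nabla\rho^{n}\|\lesssim h$ and setting $h=P^{-1}$ completes the proof.

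The principal difficulty I anticipate is the nonlinear mismatch term $\bigl[M(\|\nabla\tilde u_{h}^{n-1}\|^{2})-M(\|\nabla u^{n}\|^{2})\bigr](\nabla M_{h}u^{n},\nabla v_{h})$: after integration by parts it becomes a multiple of $(\Delta_{h}M_{h}u^{n},v_{h})$, so it must be absorbed using the bound $\|\Delta_{h}M_{h}u\|\lesssim K$ in \eqref{16-10-22-7} together with the Lipschitz decomposition $\|\nabla\tilde u_{h}^{n-1}-\nabla u^{n}\|\leq\|\nabla\tilde\rho^{n-1}\|+\|\nabla\tilde\theta^{n-1}\|+\|\nabla\mathbb{L}^{n}\|$. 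The term $\|\nabla\tilde\theta^{n-1}\|$ must then be re-expressed via the $\||\cdot\||_{1}$ norm at the two previous time levels using the $M$-Conv property \eqref{M conv prop } so that it enters the Gr\"onwall-type recursion with the correct weight; this bookkeeping, together with keeping $\tau_{1}$ small enough in the base case, is the only delicate point of the argument.
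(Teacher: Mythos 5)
Your overall route is exactly the paper's: the same $\rho^{n}/\theta^{n}$ splitting through the modified Ritz--Volterra projection, the same weighted norm $\||\cdot\||_{1}$ from \eqref{18-10-22-6-1-1}, testing the error equation against $-\Delta_{h}\theta^{n}$ via the discrete Laplacians \eqref{discrete laplace} and \eqref{discrete laplace-1}, the bound $\|\Delta_{h}^{b_{2}}\theta^{j}\|\lesssim\|\nabla\theta^{j}\|$, the max-index argument, the $M$-Conv property and the discrete Gr\"onwall inequality, with the nonlinear mismatch absorbed through \eqref{grad bound}, \eqref{AMP1as} and \eqref{16-10-22-7}. The paper only writes out the $n=1$ step and then refers back to the proofs of \eqref{lk} and \eqref{grad bound}; your $n\geq 2$ recursion is the expected filling-in of that reference.

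There is, however, one concrete step that as written would not deliver the stated rate: your handling of the L1 truncation error. In your base-case display you pair $\mathbb{T}^{1}$ with $\Delta_{h}\theta^{1}$ and convert $\|\Delta_{h}\theta^{1}\|$ into $\tau_{1}^{-\alpha/2}\||\theta^{1}\||_{1}$, leaving the coefficient $\tau_{1}^{\alpha/2}\|\mathbb{T}^{1}\|$. By \eqref{p1} and \eqref{Sw} one only has $\|\mathbb{T}^{1}\|\lesssim t_{1}^{-\alpha}N^{-\delta\alpha}\sim 1$, so $\tau_{1}^{\alpha/2}\|\mathbb{T}^{1}\|\sim N^{-\delta\alpha/2}$ and your claimed conclusion $\||\theta^{1}\||_{1}\lesssim h+N^{-\delta\alpha}$ does not follow; the same pairing in the recursion gives the factor $\tau_{n}^{\alpha/2}t_{n}^{-\alpha}$, which by \eqref{SU4} and \eqref{am1} is as large as $N^{\delta\alpha/2}$ for small $n$, so the uniform rate $N^{-\min\{\delta\alpha,2-\alpha\}}$ is lost. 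The paper avoids this by keeping the truncation term (and the $^{C}D^{\alpha}_{t}\rho$ term) matched with $\nabla\theta^{n}$ rather than $\Delta_{h}\theta^{n}$, i.e.\ using $(\mathbb{T}^{n},-\Delta_{h}\theta^{n})=(\nabla\mathbb{T}^{n},\nabla\theta^{n})$ so that the prefactor is $\tau_{n}^{\alpha}\|\nabla\mathbb{T}^{n}\|\lesssim(\tau_{n}/t_{n})^{\alpha}N^{-\min\{\delta\alpha,2-\alpha\}}\leq N^{-\min\{\delta\alpha,2-\alpha\}}$ (this is what appears in \eqref{30-11-22-13}, together with $\|\nabla P_{h}\,{}^{C}D^{\alpha}_{t_{1}}\rho\|$ via \eqref{14-9-22-4-2} and \eqref{B10}); the price is a gradient analogue of Lemma \ref{6-5-22-5}, which the regularity assumption \eqref{14-9-22-2} is meant to supply. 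With that adjustment to the bookkeeping of the residual terms, your argument coincides with the paper's proof.
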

\begin{proof}For $n=1$, $\theta^{1}$ satisfies
\begin{equation}\label{30-11-22-11}
\begin{aligned}
    \left(\mathbb{D}^{\alpha}_{t}\theta^{1},v_{h}\right)&+M\left(\|\nabla u_{h}^{1}\|^{2}\right)(\nabla \theta^{1},\nabla v_{h})\\
    &=\left(\mathbb{D}^{\alpha}_{t}M_{h}u^{1}-~^{C}D^{\alpha}_{t_{1}}M_{h}u,v_{h}\right)+\left(^{C}D^{\alpha}_{t_{1}}M_{h}u-~^{C}D^{\alpha}_{t_{1}}u,v_{h}\right)\\
    &+\left[M\left(\|\nabla u_{h}^{1}\|^{2}\right)-M\left(\|\nabla u^{1}\|^{2}\right)\right](\nabla M_{h}u^{1},\nabla v_{h})+\tau_{1}B(t_{1},t_{1},\theta^{1},v_{h})\\
    &+\int_{0}^{t_{1}}B(t_{1},s,M_{h}u(s),v_{h})~ds-\tau_{1}B(t_{1},t_{1},M_{h}u^{1},v_{h}).
    \end{aligned}
\end{equation}
Employ the definition of discrete Laplacian operators \eqref{discrete laplace} and \eqref{discrete laplace-1} in \eqref{30-11-22-11} to get 
\begin{equation}\label{30-11-22-12}
\begin{aligned}
    \left(\mathbb{D}^{\alpha}_{t}\theta^{1},v_{h}\right)&+M\left(\|\nabla u_{h}^{1}\|^{2}\right)(-\Delta_{h} \theta^{1}, v_{h})\\
    &=\left(-\mathbb{T}^{1},v_{h}\right)+\left(-P_{h}~^{C}D^{\alpha}_{t_{1}}\rho,v_{h}\right)+\left(-\mathbb{Q}^{1},v_{h}\right)\\
    &+\left[M\left(\|\nabla u_{h}^{1}\|^{2}\right)-M\left(\|\nabla u^{1}\|^{2}\right)\right](-\Delta_{h} M_{h}u^{1}, v_{h})+\tau_{1}\left(-\Delta_{h}^{b_{2}}~\theta^{1},v_{h}\right)\\
    &+\tau_{1}(\nabla \cdot(b_{1}(x,t_{1},t_{1})\theta^{1}),v_{h})+\tau_{1}(b_{0}(x,t_{1},t_{1})\theta^{1},v_{h}).
    \end{aligned}
\end{equation}
Taking $v_{h}=-\Delta_{h}\theta^{1}$ in \eqref{30-11-22-12}, we have 
\begin{equation}\label{30-11-22-13}
\begin{aligned}
    \|\nabla \theta^{1}\|^{2}+\tau_{1}^{\alpha}\|\Delta_{h} \theta^{1}\|^{2}&\lesssim \tau_{1}^{\alpha}\|\nabla \mathbb{T}^{1}\|\|\nabla \theta^{1}\|+\tau_{1}^{\alpha}\|\nabla P_{h}~^{C}D^{\alpha}_{t_{1}}\rho\|\|\nabla \theta^{1}\|+\tau_{1}^{\alpha}\|\mathbb{Q}^{1}\|\|\Delta_{h}\theta^{1}\|\\
    &+\tau_{1}^{\alpha}\|\nabla \rho^{1}\|\|\Delta_{h}\theta^{1}\|+\tau_{1}^{\alpha}\|\nabla \theta^{1}\|\|\Delta_{h}\theta^{1}\|+\tau^{1+\alpha}\|\Delta_{h}^{b_{2}}~\theta^{1}\|\|\Delta_{h}\theta^{1}\|\\
    &+\tau^{1+\alpha}\|\nabla \theta^{1}\|\|\Delta_{h}\theta^{1}\|+\tau^{1+\alpha}\|\theta^{1}\|\|\Delta_{h}\theta^{1}\|.
    \end{aligned}
\end{equation}
Using the definition $\||\cdot\||_{1}$ \eqref{18-10-22-6-1-1} (defined in Theorem \ref{12345-2}), the equation \eqref{30-11-22-13} is rewritten as 
\begin{equation}\label{30-11-22-14}
\begin{aligned}
    \||\theta^{1}\||_{1}^{2} &\lesssim \tau_{1}^{\alpha}\|\nabla \mathbb{T}^{1}\|~\||\theta^{1}\||_{1}+\tau_{1}^{\alpha}\|\nabla P_{h} ~^{C}D^{\alpha}_{t_{1}}\rho\|~\|| \theta^{1}\||_{1}+\tau_{1}^{\alpha/2}\|\mathbb{Q}^{1}\|~\||\theta^{1}\||_{1}\\
    &+\tau_{1}^{\alpha/2}\|\nabla \rho^{1}\|~\||\theta^{1}\||_{1}+\tau_{1}^{\alpha/2}\|\nabla \theta^{1}\|~\||\theta^{1}\||_{1}+\tau_{1}^{1+\alpha/2}\|\Delta_{h}^{b_{2}}~\theta^{1}\|~\||\theta^{1}\||_{1}\\
    &+\tau_{1}^{1+\alpha/2}\|\nabla \theta^{1}\|~\||\theta^{1}\||_{1}+\tau_{1}^{1+\alpha/2}\|\theta^{1}\|~\||\theta^{1}\||_{1}.
    \end{aligned}
\end{equation}
Cancellation of $\||\theta^{1}\||_{1}$ from both sides of \eqref{30-11-22-14}and estimates \eqref{14-9-22-4-2}, \eqref{Sw}, \eqref{B10}, \eqref{p10}, \eqref{A10}, \eqref{30-11-22-7}, \eqref{16-10-22-5}  lead to 
\begin{equation}\label{30-11-22-15}
\begin{aligned}
    \||\theta^{1}\||_{1} &\lesssim \tau_{1}^{\alpha}t_{1}^{-\alpha}N^{-\delta \alpha}+\tau_{1}^{\alpha}h+\tau_{1}^{\alpha/2}N^{-\delta (1+\alpha)}+\tau_{1}^{\alpha/2}h+h+N^{-\delta \alpha}+\tau_{1}h+\tau_{1}N^{-\delta \alpha}\\
    &+\tau_{1}^{1}(h+N^{-\delta \alpha}).
    \end{aligned}
\end{equation}
This implies 
\begin{equation}\label{30-11-22-15-1}
    \||\theta^{1}\||_{1} \lesssim h+N^{-\delta \alpha}.
\end{equation}
Proceed further as we prove estimates \eqref{30-11-22-15-1}, \eqref{lk} and \eqref{grad bound} to complete the proof of this theorem. 
\end{proof}

\section{Numerical experiments}\label{15-10-22-4}
In this section, we implement two  numerical examples to validate the convergence results \eqref{lk} and \eqref{lks}. In our experiments, we partition the domain $\Omega$ into a uniform triangulation with $P+1$ nodes in each spatial direction and divide the time interval $[0,T]$ using graded mesh  with $N+1$ points. At first time level $t_{1}$, the numerical scheme \eqref{SUM1} gives rise to a system of nonlinear algebraic equations with non-sparse Jacobian \cite{gudi2012finite}. This system of nonlinear equations is solved by applying modified version of Newton-Raphson method \cite{gudi2012finite} with tolerance $10^{-7}$ as a stopping criterion. The value of $u_{h}^{1}$ is used to calculate $u_{h}^{2}$ and so on in the linearized numerical scheme \eqref{SU1}.
\par In order to study the convergence rates, we made several runs with different combinations of the space-time meshes. The convergence rates are calculated through the following $\log$-$\log$ formula 
\begin{equation*}
   \text{ Convergence rate} :=\begin{cases}\frac{\log(E(P_{1},N)/E(P_{2},N))}{\log(P_{1}/P_{2})}& \text{in space direction},\\
   \frac{\log(E(P,N_{1})/E(P,N_{2}))}{\log(N_{1}/N_{2})} & \text{in time direction },
\end{cases}
\end{equation*}
where $E(P_{1},N),E(P_{2},N),E(M,N_{1})$, and $E(M,N_{2})$ are errors  at different mesh points.
\begin{exam}\label{30-11-22-1} Consider the equation \eqref{16-9-22-1} in two dimensional domain  $\Omega=[0,1]\times[0,1]$ and $T=1$ such that $(x,y)\in \Omega $ and $t,s \in [0,T]$ with the following data 
 \begin{enumerate}
\item The nonlocal diffusion coefficient $M(\|\nabla u\|^{2})=1+\|\nabla u\|^{2}$ \cite{gudi2012finite, kundu2016kirchhoff}.
\item The memory operator with  coefficients \cite{ferreira2007memory}
\begin{equation*}
\begin{aligned}
    b_{2}(x,y,t,s)&=I,~ \text{where} ~I~ \text{ is the identity matrix},\\
b_{1}(x,y,t,s)&=b_{0}(x,y,t,s)=0.
    \end{aligned}
\end{equation*} 
\item Initial condition $u_{0}=0$ and source function $f(x,y,t)$ is taken  in such a way that the exact solution $u$ is given by  $u=t^{\alpha}(x-1)(y-1)\sin \pi x \sin \pi y $. This solution $u$ satisfies the regularity assumption \eqref{14-9-22-1}-\eqref{14-9-22-2}.
\end{enumerate}
\end{exam}
\noindent We observe the spatial convergence rates  by running the code at different points (P) in the space direction. We fix the number of points in the time direction as $N=5000$ so that temporal error is negligible as compared to the spatial error.
\begin{table}[htbp]
	\centering
	\caption{\emph{ Errors and convergence rates in space direction for $\alpha=0.5$}}
 	\begin{tabular}{|c|c|c|c|c|}
 	\hline
	$\delta$  & P  &	$L^{2}$ Error~~~~   Rate    &	$H^{1}_{0}$ Error~~~~   Rate      \\
 	\hline
              & 9 & 6.4781e-03~~~~~~~~~~~~~	 & 1.1817e-01~~~~~~~~~~~~~	 \\
 		
      1     & 10 & 5.2692e-03~~~~~1.9605	 & 1.0642e-01~~~~~0.9941  \\
		
           & 11 & 4.3683e-03~~~~~1.9672	 & 9.6786e-02~~~~~0.9952  \\
           
          & 12 & 3.6794e-03~~~~~1.9723	 & 8.8752e-02~~~~~0.9960  \\
    \hline
  $\simeq$ &  & ~~~~~~~~~~~~~~~~\textbf{2}   &  ~~~~~~~~~~~~~~~~\textbf{1}  \\
 \hline		
          & 9 & 6.4778e-03~~~~~~~~~~~~~	 & 1.1817e-01~~~~~~~~~~~~~	 \\
 		
      $\frac{2-\alpha}{\alpha}$    & 10 & 5.2688e-03~~~~~1.9606	 & 1.0642e-01~~~~~0.9941  \\
		
         & 11 & 4.3680e-03~~~~~1.9673	 & 9.6786e-02~~~~~0.9952  \\
           
          & 12 & 3.6791e-03~~~~~1.9725	 & 8.8752e-02~~~~~0.9960  \\
    \hline
  $\simeq$  &  &  ~~~~~~~~~~~~~~~~~\textbf{2}   &  ~~~~~~~~~~~~~~~~\textbf{1}  \\
 	\hline
 	\end{tabular}%
 	\label{table1}%
 \end{table}%
 \par   From  Table \ref{table1}, we observe that the $L^{\infty}(0,T;H^{1}_{0}(\Omega))$ norm convergence rate is linear  that confirms the result \eqref{lks}. It is interesting to note the second-order accuracy in the  $L^{\infty}(0,T;L^{2}(\Omega))$ norm, but theoretically we have only a linear order accuracy  rate in this norm \eqref{lk}. To support this  claim, we don't have enough theory yet, that will be taken up in  future. This type of phenomenon was also observed  in the literature  for Kirchhoff type problems \cite{gudi2012finite,kundu2016kirchhoff}.
\par Now, we test the accuracy of the proposed numerical scheme \eqref{SU1}-\eqref{SUM3} in the temporal direction for various values of fractional derivative exponent $\alpha$ and grading parameter $\delta$ in Table \ref{table2} and Table \ref{table3}. 
\begin{table}[htbp]
	\centering
	\caption{\emph{ $L^{\infty}(0,T;L^{2}(\Omega))$ Errors and convergence rates in time  direction }}
 	\begin{tabular}{|c|c|c|c|c|c|}
 	\hline
              &    &  $\alpha=0.4$	         &    $\alpha=0.6$        &     $\alpha=0.8$\\
 	\hline
	$\delta$  & P  &	 Error~~~~~~~~~~   Rate    &	Error~~~~~~~~~~   Rate      &  Error~~~~~~~~~~    Rate  \\
 	\hline
              & 9 & 6.4428e-03~~~~~~~~~~~~~	 & 6.5082e-03~~~~~~~~~~~~~~&  6.5575e-03~~~~~~~~~~~~~~\\
 		
      1     & 10 & 5.2398e-03~~~~~0.3922	 & 5.2934e-03~~~~~0.5882  &  5.3331e-03~~~~~0.7868\\
		
           & 11 & 4.3438e-03~~~~~0.3936	 & 4.3883e-03~~~~~0.5899  &  4.4210e-03~~~~~0.7874\\
           
          & 12 & 3.6586e-03~~~~~0.3945	 & 3.6962e-03~~~~~0.5918  &  3.7236e-03~~~~~0.7925\\
    \hline
   &  & $\min\{\delta \alpha,2-\alpha\}\simeq$~\textbf{0.4}   &$\min\{\delta \alpha,2-\alpha\}\simeq$~ \textbf{0.6}  & $\min\{\delta \alpha,2-\alpha\}\simeq$~\textbf{0.8}\\
 \hline		
          & 9 & 6.4454e-03~~~~~~~~~~~~~	 & 6.5072e-03~~~~~~~~~~~~~~&  6.5524e-03~~~~~~~~~~~~~~\\
 		
      $\frac{2-\alpha}{\alpha}$     & 10 & 5.2421e-03~~~~~1.5687	 & 5.2928e-03~~~~~1.3719  &  5.3296e-03~~~~~1.1723\\
		
           & 11 & 4.3461e-03~~~~~1.5769	 & 4.3879e-03~~~~~1.3759  &  4.4183e-03~~~~~1.1794\\
           
          & 12 & 3.6599e-03~~~~~1.5842	 & 3.6959e-03~~~~~1.3815  &  3.7215e-03~~~~~1.1958\\
    \hline
   $\simeq$ &  &$\min\{\delta \alpha,2-\alpha\}\simeq$~ \textbf{1.6}  & $\min\{\delta \alpha,2-\alpha\}\simeq$~\textbf{1.4}  & ~~$\min\{\delta \alpha,2-\alpha\}\simeq$~\textbf{1.2}\\
 	\hline
 	\end{tabular}%
 	\label{table2}%
 \end{table}%
   \par In Table \ref{table2}, errors and convergence rates in $L^{\infty}(0,T;L^{2}(\Omega))$ norm are calculated.  For $\delta =1$ ( i.e., uniform mesh in time), we set $N\simeq \lfloor P^{\frac{2}{\alpha}} \rfloor$ and notice  convergence rate of $O(N^{-\alpha})$ as  proved in \eqref{lk}. This convergence rate is not optimal, it happens because the solution $u$ has a weak singularity near $t=0$. For such type of solutions, a graded mesh which is concentrated near $t=0$ improves the accuracy rate and numerical precision of the approximate solution. Theorem \ref{Main Theorem}  and Theorem \ref{Main Theorem-1} suggest that one can achieve the best possible temporal convergence rate of  $O(N^{-(2-\alpha)})$ by choosing grading exponent $\delta \geq \frac{2-\alpha}{\alpha}$.  We remark that one should not choose larger value of $\delta$ unnecessary,  in this situation grid points more  tightly near $t=0$ and large mesh width near $t=T$ which reduces the numerical resolution of the approximate solution. Also, the constant factor involved  in Theorem \ref{Main Theorem} and Theorem \ref{Main Theorem-1}  grows with  $\delta$. Hence $\delta=\frac{2-\alpha}{\alpha}$ is the optimal choice of grading parameter in \eqref{SU4} which provides the best possible rate of convergence in the time direction. For this choice of $\delta$, we take $N\simeq \lfloor P^{\frac{2}{2-\alpha}}\rfloor$. From Table \ref{table2}, we observe that the temporal convergence rate is of $O(N^{-(2-\alpha)})$ as predicted in \eqref{lk}.
   \begin{table}[htbp]
	\centering
	\caption{\emph{ $L^{\infty}(0,T;H^{1}_{0}(\Omega))$ Errors and convergence rates in Time  direction }}
 	\begin{tabular}{|c|c|c|c|c|c|c|}
 	\hline
              &    &   & $\alpha=0.4$	         &    $\alpha=0.6$        &     $\alpha=0.8$\\
 	\hline
	$\delta$  & P  & Error	&  Rate    &	  Rate      &    Rate  \\
 	\hline
              & 9 & 1.1817e-01 &~~~~~ &~~~~~  &  ~~~~~\\
 		
      1     & 10 & 1.0642e-01 & 0.3978	 & 0.5941  & 0.7928\\
		
           & 11 & 9.6786e-02 & 0.3980	 & 0.5963  & 0.7958\\
           
          & 12 & 8.8752e-02 & 0.3984	 & 0.6035  & 0.7970\\
    \hline
   &  & $\min\{\delta \alpha,2-\alpha\}$  &$\simeq$~\textbf{0.4}   &$\simeq$~ \textbf{0.6}  & $\simeq$~\textbf{0.8}\\
 \hline
              & 9 & 1.1817e-01 &~~~~~ &~~~~~  &  ~~~~~\\
 		
      $\frac{2-\alpha}{\alpha}$     & 10 & 1.0642e-01 & 1.5861	 & 1.3948  & 1.1884\\
		
           & 11 & 9.6786e-02 & 1.5883	 & 1.4031  & 1.1961\\
           
          & 12 & 8.8752e-02 & 1.5976	 & 1.4160  & 1.2053\\
    \hline
   &  & $\min\{\delta \alpha,2-\alpha\}$  &$\simeq$~\textbf{1.6}   &$\simeq$~ \textbf{1.4}  & $\simeq$~\textbf{1.2}\\
 	\hline
 	\end{tabular}%
 	\label{table3}%
 \end{table}%
 \par In Table \ref{table3}, we compute errors and convergence rates in $L^{\infty}(0,T;H^{1}_{0}(\Omega))$ norm. For $\delta =1$, we choose $N\simeq \lfloor P^{\frac{1}{\alpha}} \rfloor$ and obtain numerical results for various values of $\alpha$. These results reflect the convergence rate  of order $\alpha$ that justify the theoretical claim \eqref{lks}. For $\delta = \frac{2-\alpha}{\alpha}$, we put $N\simeq \lfloor  P^{\frac{1}{2-\alpha}}\rfloor$ and achieve the accuracy rate of order $2-\alpha$. These convergence rates agree with the theoretical result proved in \eqref{lks}.
\par Now, we implement an another example with variable coefficients in memory operator.
\begin{exam} Consider an initial-boundary value problem \eqref{16-9-22-1} in $ \Omega=(0,1) \times (0,1)$ and $T=1$ such that $(x,y)\in \Omega $ and $t,s \in [0,T]$ with the following information 
\begin{enumerate}
\item The nonlocal diffusion coefficient $M(\|\nabla u\|^{2})=1+\|\nabla u\|^{2}$ \cite{gudi2012finite, kundu2016kirchhoff}.
\item  The memory operator with  coefficients 
\begin{equation*}
\begin{aligned}
    b_{2}(x,y,t,s)&= (1+t)(1+s)\begin{bmatrix}
1+x & 0\\
0 & 1+y 
\end{bmatrix},\\
b_{1}(x,y,t,s)&=(1+t)(1+s)\begin{bmatrix}
x \\
y 
\end{bmatrix},\\
b_{0}(x,y,t,s)&=(1+t)(1+s)xy.
    \end{aligned}
\end{equation*} 

\item Initial condition $u_{0}=0$ and  source function $f$ is chosen in such a way that the exact solution of the problem \eqref{16-9-22-1} is given by 
\begin{equation}
    u(x,y,t)=t^{\alpha}(x-x^{2})(y-y^{2})
\end{equation}
 which satisfies the regularity estimates \eqref{14-9-22-1}-\eqref{14-9-22-2}.
\end{enumerate}
\end{exam}
\noindent In this example also, we first evaluate errors and convergence rates in the space direction. For that take $N=5000$ so that spatial error dominates the convergence rates. From the previous Example \ref{30-11-22-1}, we interpret that the grading parameter $\delta = \frac{2-\alpha}{\alpha}$ gives the better convergence rates. Therefore, in this example we collect the numerical results for grading parameter $\delta = \frac{2-\alpha}{\alpha}$. From the Table \ref{table4}, we see that the convergence rate is quadratic in $L^{\infty}(0,T;L^{2}(\Omega))$ norm and linear in $L^{\infty}(0,T;H^{1}_{0}(\Omega))$ norm. 
\begin{table}[htbp]
	\centering
	\caption{\emph{ Errors and convergence rates in space direction for $\alpha=0.5$ and $\delta=\frac{2-\alpha}{\alpha}$}}
 	\begin{tabular}{|c|c|c|c|}
 	\hline
	                 P &	  $L^{2}$ Error~~~~   Rate    &	$H^{1}_{0}$ Error~~~~   Rate      \\
 \hline		
                             9 &     8.9768e-04~~~~~~~~~~~	      & 2.0361e-02~~~~~~~~~~~~  \\
 		
   10 &    7.2881e-04~~~~~1.9780	      & 1.8257e-02~~~~~1.0355  \\
		
                             11 &    6.0337e-04~~~~~1.9818	      & 1.6549e-02~~~~~1.0305  \\
           
                             12 &    5.0767e-04~~~~~1.9847	      & 1.5135e-02~~~~~1.0265  \\
    \hline
                  $\simeq$      &  ~~~~~~~~~~~~~~~~~\textbf{2}   &  ~~~~~~~~~~~~~~~~\textbf{1}  \\
 	\hline
 	\end{tabular}%
 	\label{table4}%
 \end{table}%
\par  Further, we investigate the convergence rates in the time direction in $L^{\infty}(0,T;L^{2}(\Omega))$ norm as well as in $L^{\infty}(0,T;H^{1}_{0}(\Omega))$ norm. In Table \ref{table5}, we get the numerical results for $L^{\infty}(0,T;L^{2}(\Omega))$ norm with $\delta = \frac{2-\alpha}{\alpha}$ by putting  $N\simeq \lfloor P^{\frac{2}{2-\alpha}}\rfloor$. From these results we can say that the convergence rate is  of $O(N^{-(2-\alpha)})$ which is in accordance with the theoretical outcome \eqref{lk}.
 \begin{table}[htbp]
	\centering
	\caption{\emph{ $L^{\infty}(0,T;L^{2}(\Omega))$ Errors and convergence rates in time  direction for $\delta = \frac{2-\alpha}{\alpha}$  }}
 	\begin{tabular}{|c|c|c|c|c|c|}
 	\hline
                                       &  $\alpha=0.2$   &  $\alpha=0.4$	         &    $\alpha=0.6$        &     $\alpha=0.8$\\
 	\hline
	                                 P  &	  Error  ~~~~~ Rate    &  Error   Rate    &	Error  Rate      &  Error Rate  \\
 		\hline
                                     9 & 1.9132e-03~~~~~~~~~~	 & 1.0081e-03~~~~~~~~~~	 & 8.6619e-04~~~~~~~~~~ &  8.2069e-04~~~~~~~~~~~\\
 		
                                     10 & 1.5653e-03~~1.7811	 & 8.2233e-04~~1.5468	 & 7.0303e-04~~1.3861  &  6.6538e-04~~1.1906\\
		
                                     11 & 1.2958e-03~~1.7856	 & 6.8299e-04~~1.5619	 & 5.8189e-04~~1.3878  &  5.5028e-04~~1.1947\\
           
                                     12 & 1.0918e-03~~1.7994	 & 5.7596e-04~~1.5714	 & 4.8951e-04~~1.3916  &  4.6264e-04~~1.2087\\
    \hline
                                     & $\simeq$ ~~~~~~~~ \textbf{1.8} &$\simeq$  ~~~~~~~~\textbf{1.6}  & $\simeq$ ~~~~~~~~\textbf{1.4}  &$\simeq$ ~~~~~~~~\textbf{1.2}\\
 	\hline
 	\end{tabular}%
 	\label{table5}%
 \end{table}%
\par  Finally, $L^{\infty}(0,T;H^{1}_{0}(\Omega))$ norm errors and convergence rates are evaluated in Table \ref{table6} for grading parameter $\delta = \frac{2-\alpha}{\alpha}$ by taking $N\simeq \lfloor P^{\frac{1}{2-\alpha}}\rfloor$. We achieve the convergence rate of order $2-\alpha$ in $L^{\infty}(0,T;H^{1}_{0}(\Omega))$ norm which is same as established in estimate \eqref{lks}.
 \begin{table}[htbp]
	\centering
	\caption{\emph{ $L^{\infty}(0,T;H^{1}_{0}(\Omega))$ Errors and convergence rates in time  direction for $\delta = \frac{2-\alpha}{\alpha}$  }}
 	\begin{tabular}{|c|c|c|c|c|c|}
 	\hline
                                       &  $\alpha=0.2$   &  $\alpha=0.4$	         &    $\alpha=0.6$        &     $\alpha=0.8$\\
 	\hline
	                                 P  &	  Error  ~~~~~ Rate    &  Error   Rate    &	Error  Rate      &  Error Rate  \\
 		\hline
                                     9 & 2.3239e-02~~~~~~~~~~	 & 2.0551e-02~~~~~~~~~~	 & 2.0246e-02~~~~~~~~~~ &  2.0105e-02~~~~~~~~~~\\
 		
                                     10 & 2.0924e-02~~1.8821	 & 1.8423e-02~~1.7482	 & 1.8170e-02~~1.4394  &  1.8063e-02~~1.2256\\
		
                                     11 & 1.8995e-02~~1.8323	 & 1.6695e-02~~1.6746	 & 1.6482e-02~~1.4316  &  1.6399e-02~~1.2233\\
           
                                     12 & 1.7360e-02~~1.7944	 & 1.5263e-02~~1.6139	 & 1.5082e-02~~1.4297  &  1.5017e-02~~1.2026\\
    \hline
                                     & $\simeq$ ~~~~~~~~ \textbf{1.8} &$\simeq$  ~~~~~~~~\textbf{1.6}  & $\simeq$ ~~~~~~~~\textbf{1.4}  &$\simeq$ ~~~~~~~~\textbf{1.2}\\
 	\hline
 	\end{tabular}%
 	\label{table6}%
 \end{table}%

 \begin{rmk}
 From Tables \ref{table5} and \ref{table6}, we remark that by choosing $\delta=\frac{2-\alpha}{\alpha}$ the convergence rate is of order $2-\alpha$ for smaller values of $\alpha$ as well as larger value of $\alpha$. However, these graded mesh has its own defect. For example, if $\alpha$ is small then grading exponent $\delta$ becomes large that causes faster growth of the constant involved in Theorem \ref{Main Theorem} and Theorem \ref{Main Theorem-1}. This phenomena reduces the numerical resolution of the approximate solution and produces larger error as in Table \ref{table5}.
 \end{rmk}

\section{Concluding Remarks}\label{15-10-22-5}
In this work, we  introduced a new type of Ritz-Volterra projection operator using  which  semi discrete error analysis for the equation $(\mathcal{D}^{\alpha})$ has been established. In comparison to the Newton-Raphson method,  we have successfully reduced the computational cost and storage by constructing a novel linearization algorithm \eqref{SU1}-\eqref{SUM3} for the equation  $(\mathcal{D}^{\alpha})$. We presented a new approach to derive a priori bounds on the numerical solution and  global convergence rate for the proposed fully discrete scheme. Finally, numerical experiments revealed  the robustness and efficiency of the developed numerical scheme.

\bibliographystyle{plain}
\bibliography{FP3BIB}

\end{document}